\documentclass[11pt]{amsart}
\usepackage{amssymb}
\usepackage{amsthm}
\usepackage{amsmath}
\usepackage{latexsym}
\usepackage{comment}
\usepackage{hyperref}
\usepackage{cleveref}
\usepackage{verbatim}
\usepackage{xypic}
\usepackage{graphicx}
\usepackage{subcaption}
\usepackage[utf8]{inputenc}
\usepackage[margin=1in]{geometry}
\usepackage[shortlabels]{enumitem}
\usepackage{xcolor}
\usepackage{stmaryrd}

\newtheorem{thm}{Theorem}[section]
\newtheorem{prop}[thm]{Proposition}

\newtheorem{lemma}[thm]{Lemma}
\newtheorem{cor}[thm]{Corollary}
\newtheorem{dfn}[thm]{Definition}

\theoremstyle{definition}

\theoremstyle{definition} \newtheorem{rmk}[thm]{Remark}

\newtheorem{conj}[thm]{Conjecture}
\newtheorem{asr}[thm]{Assertion}

\newcommand{\cc}{\mathbb{C}}
\newcommand{\rr}{\mathbb{R}}
\newcommand{\qq}{\mathbb{Q}}
\newcommand{\zz}{\mathbb{Z}}

\newcommand{\proj}{\mathbb{P}}

\newcommand{\PGL}{\mathrm{PGL}}

\newcommand{\hvt}{[\mathrm{h.v.t.}]}

\newcommand{\Berk}{\mathbb{P}_{\cc_K}^{1, \mathrm{an}}}
\newcommand{\Smin}{S^{\mathrm{min}}}

\bibliographystyle{plain}
\graphicspath{{pics/}}

\title[Branch points of split degenerate superelliptic curves I]{Branch points of split degenerate superelliptic curves II: on a conjecture of Gerritzen and van der Put}
\author{Jeffrey Yelton}
\address{\parbox{\linewidth}{Department of Mathematics and Computer Science, Wesleyan University \\ 265 Church Street, Middletown, CT 06459-0128}}
\email{jyelton@wesleyan.edu}

\begin{document}

\maketitle

\begin{abstract}

Let $K$ be a field with a discrete valuation, and let $p$ be a prime.  It is known that if $\Gamma \lhd \Gamma_0 < \PGL_2(K)$ is a Schottky group normally contained in a larger group which is generated by order-$p$ elements each fixing $2$ points $a_i, b_i \in \proj_K^1$, then the quotient of a certain subset of the projective line $\proj_K^1$ by the action of $\Gamma$ can be algebraized as a superelliptic curve $C : y^p = f(x) / K$.  The subset $S \subset K \cup \{\infty\}$ consisting of these pairs $a_i, b_i$ of fixed points is mapped bijectively modulo $\Gamma$ to the set $\mathcal{B}$ of branch points of the superelliptic map $x : C \to \proj_K^1$.  A conjecture of Gerritzen and van der Put, in the case that $C$ is hyperelliptic and $K$ has residue characteristic $\neq 2$, compares the cluster data of $S$ with that of $\mathcal{B}$.  We show that this conjecture requires a slight modification in order to hold and then prove a much stronger version of the modified conjecture that holds for any $p$ and any residue characteristic.

\end{abstract}

\section{Introduction} \label{sec intro}

This paper represents a continuation of a study of split degenerate \textit{superelliptic curves} $C$ which are $p$-cyclic covers of the projective line over a non-archimedean ground field $K$ of characteristic different from $p$ and of the uniformization of such a curve as a quotient of a certain subset of $K \cup \{\infty\}$ by the action of a free group $\Gamma < \PGL_2(K)$.  We denote the set of branch points of the covering map $C \to \proj_K^1$ by $\mathcal{B}$ and write $d = \#\mathcal{B}$.  One can easily compute from the Riemann-Hurwitz formula that the genus of $C$ is given by $\frac{1}{2}(p - 1)(d - 2)$.  It is also well known that a superelliptic curve which is a degree-$p$ cover of the projective line can be described by an affine model of the form 
\begin{equation} \label{eq superelliptic model}
y^p = \prod_{i = 1}^{d'} (x - z_i)^{r_i} \in K[x],
\end{equation}
 with $d' \in \{d, d - 1\}$, where $1 \leq r_i \leq p - 1$ for $1 \leq i \leq d'$ and $\mathcal{B} = \{z_1, \dots, z_{d'}\}$ (resp. $\mathcal{B} = \{z_1, \dots, z_{d'}, \infty\}$) if $d' = d$ (resp. $d' = d - 1$).  In the special case that $p = 2$, we call $C$ a \textit{hyperelliptic curve}; in this case, our formula for the genus $g$ implies that $d$ is even and that the equation in (\ref{eq superelliptic model}) can be written as $y^2 = f(x) \in K[x]$ for a squarefree polynomial $f$ of degree $2g + 1$ (resp. $2g + 2$) if the degree-$2$ cover $C \to \proj_K^1$ is not (resp. is) branched above $\infty$.
 
\subsection{Background on non-archimedean uniformization of superelliptic curves} \label{sec intro background}

Throughout this paper, we assume that $K$ is a field equipped with a discrete valuation $v : K^\times \to \zz$, and we denote by $\cc_K$ the completion of an algebraic closure of $K$.  We fix a prime $p$ and a primitive $p$th root of unity $\zeta_p \in \cc_K$ and assume throughout that we have $\zeta_p \in K$.  (This will ensure, among other things, that each automorphism of $C / K$ as a cyclic $p$-cover of $\proj_K^1$ is defined over $K$.)  We adopt the convention of using the notation $\proj_K^1$ both for the projective line with its structure as a variety over $K$ and for the set of $K$-points of $\proj_K^1$, \textit{i.e.} in a context that will appear frequently in this paper, we write $\proj_K^1$ for the set $K \cup \{\infty\}$.

Mumford showed in his groundbreaking paper \cite{mumford1972analytic} that any curve $C / K$ (not necessarily superelliptic) of genus $g \geq 1$ can be realized as a quotient of a certain subset $\Omega \subset \proj_K^1$ by the action of a free subgroup $\Gamma < \PGL_2(K)$ of $g$ generators via fractional linear transformations if and only if the the curve $C$ satisfies a property called \textit{split degenerate reduction} (see \cite[Definition 6.7]{papikian2013non} or \cite[\S IV.3]{gerritzen2006schottky}).  The free subgroup $\Gamma < \PGL_2(K)$ must act discontinuously on $\proj_K^1$ (\textit{i.e.} the set of limit points under its action must not coincide with all of $\proj_K^1$), and the subset $\Omega \subset \proj_K^1$ such that $C$ can be uniformized as the quotient $\Omega / \Gamma$ coincides with the set of non-limit points.  This main result on non-archimedean uniformization of curves is given as \cite[Theorem 4.20]{mumford1972analytic} and \cite[Theorems III.2.2, III.2.12.2, and IV.3.10]{gerritzen2006schottky}; many more details are contained in those sources.  We comment that in the special case of $g = 1$, after applying an appropriate automorphism of $\proj_K^1$ we get $\Omega = \proj_K^1 \smallsetminus \{0, \infty\} = K^\times$ and that $\Gamma$ is generated by the fractional linear transformation $z \mapsto qz$ for some element $q \in K^\times$ of positive valuation, and thus we recover the Tate uniformization $C \cong K^\times / \langle q \rangle$ established in \cite{tate1995review}.

It is shown in \cite[\S9.2]{gerritzen2006schottky} and \cite[\S1]{van1983non} (for the $p = 2$ case) and in \cite[\S2]{van1982galois} (for general $p$) that given a prime $p$ and a split degenerate curve $C / K$ of genus $(p - 1)g$ realized as such a quotient $\Omega / \Gamma$, the curve $C$ is superelliptic and a degree-$p$ cover of $\proj_K^1$ if and only if $\Gamma$ is normally contained in a larger subgroup $\Gamma_0 < \PGL_2(K)$ generated by $g + 1$ elements $s_0, \dots, s_g$ whose only relations are $s_0^p = \dots = s_g^p = 1$.\footnote{In fact, the genus of every split degenerate $p$-cyclic cover of $\proj_K^1$ is divisible by $p - 1$, so all such curves arise in this way.}  In this situation, we have $[\Gamma_0 : \Gamma] = p$ and $\Omega / \Gamma_0 \cong \proj_K^1$ so that the natural surjection $\Omega / \Gamma \twoheadrightarrow \Omega / \Gamma_0$ is just the degree-$p$ covering map $C \to \proj_K^1$.  Each order-$p$ element $s_i \in \PGL_2(K)$ fixes exactly $2$ points of $\proj_K^1$, which we denote as $a_i$ and $b_i$.

As in \cite{yelton2024branch}, we call the subgroup $\Gamma_0 < \PGL_2(K)$ discussed above a \emph{$p$-Whittaker group} (see \cite[Remark 2.12]{yelton2024branch} for an explanation for this terminology).  One can show that we have $a_i, b_i \in \Omega$ for $0 \leq i \leq g$: see \Cref{cor S in Omega} below.  Writing $S = \{a_0, b_0, \dots, a_g, b_g\} \subset \Omega$, it is easy to verify that the set-theoretic image of $S$ modulo the action of the $p$-Whittaker group $\Gamma_0$ coincides with the set of branch points $\mathcal{B} \subset \proj_K^1 \cong \Omega / \Gamma_0$.  For $0 \leq i \leq g$, let us write $\alpha_i, \beta_i \in \mathcal{B}$ for the respective images of $a_i, b_i \in \Omega$, so that $\mathcal{B} = \{\alpha_0, \beta_0, \dots, \alpha_g, \beta_g\}$.  By \cite[Proposition 3.1(a)]{van1982galois}, the superelliptic curve $C$ has an equation of the form 
\begin{equation} \label{eq split degenerate superelliptic}
y^p = \prod_{i = 0}^g (x - \alpha_i)^{m_i} (x - \beta_i)^{p - m_i},
\end{equation}
where the term $(x - \alpha_i)^{m_i}$ (resp. $(x - \beta_i)^{p - m_i}$) in the product is replaced by $1$ if we have $\alpha_i = \infty$ (resp. $\beta_i = \infty$).

\subsection{Our previous results} \label{sec intro previous results}

In the previous work \cite{yelton2024branch}, the author considered a construction proceeding in the other direction: after fixing a prime $p$, we begin with a subset $S \subset \proj_K^1$ of cardinality $2g + 2$ for some integer $g \geq 1$ and try to construct a superelliptic curve of genus $(p - 1)g$ over $K$ which is uniformized using a Schottky group $\Gamma \lhd \Gamma_0 = \langle s_0, \dots, s_g \rangle$, where the fixed points $a_i, b_i \in \proj_K^1$ of each generator $s_i$ of the associated $p$-Whittaker group are the elements of $S$.  It is by no means the case that an arbitrary $(2g + 2)$-element set $S$ can be partitioned into pairs $\{a_i, b_i\}$ which each consitute the set of fixed points of an order-$p$ automorphism $s_i \in \PGL_2(K)$ such that the group $\Gamma_0 := \langle s_0, \dots, s_j \rangle$ is a $p$-Whittaker group that can be used to uniformize a superelliptic curve over $K$ (\textit{i.e.} it may be the case that for every possible partition $S = \bigsqcup_{i = 0}^g \{a_i, b_i\}$ and every choice of order-$p$ automorphisms $s_i$ fixing $a_i, b_i$, the elements $s_i$ may satisfy some group relations other than $s_0^p = \dots = s_g^p = 1$, so that the subgroup of $\PGL_2(K)$ that they generate cannot be $p$-Whittaker).

Given a $(2g + 2)$-element subset $S$, a partition $S = \bigsqcup_{i = 0}^g \{a_i, b_i\}$, and a choice of $s_i \in \PGL_2(K)$ of order $p$ which fixes $a_i, b_i$ for $0 \leq i \leq g$, we defined (in \cite[Definition 1.1]{yelton2024branch}) the \emph{associated subgroups} $\Gamma \lhd \Gamma_0 < \PGL_2(K)$ as $\Gamma_0 = \langle s_0, \dots, s_g \rangle$ and $\Gamma = \langle s_0^{j - 1} s_i s_0^j \rangle_{0 \leq i \leq g, 1 \leq j \leq p - 1}$.  If $\Gamma$ is a Schottky group (thus leading to the construction of a superelliptic curve) and $\Gamma_0$ cannot be generated by fewer than $g + 1$ elements (which then ensures that $\Gamma$ is freely generated by the $(p - 1)g$ elements of the form $s_0^{j - 1} s_i s_0^j$), then we say (as in \cite[Definition 1.2]{yelton2024branch}) that $S$ is \emph{$p$-superelliptic}.  It turns out that if a set $S$ is $p$-superelliptic, then there is a unique partition $S = \bigsqcup_{i = 0}^g \{a_i, b_i\}$ such that the associated group $\Gamma$ is Schottky (regardless of the choice of order-$p$ automorphism $s_i \in \PGL_2(K)$ fixing $a_i, b_i$, which is unique up to power by \cite[Proposition 2.8(a)]{yelton2024branch}\footnote{While replacing a generator $s_i$ by a prime-to-$p$ power does not change $\Gamma_0$, it does change $\Gamma$, and it affects the curve $C \cong \Omega / \Gamma$ by changing the integer $m_i$ appearing in its defining equation (\ref{eq split degenerate superelliptic}): see \cite[Proposition 3.2]{van1982galois}.}).  Moreover, it is shown that the condition on $S$ of \emph{being clustered in $\frac{v(p)}{p - 1}$-separated pairs} (see \Cref{dfn clustered in pairs} below) is necessary (though not sufficient in general: see \cite[Example 2.18]{yelton2024branch}) for $S$ to be $p$-superelliptic.  The above two assertions are given as parts (b) and (a) respectively of \cite[Remark 2.15]{yelton2024branch}.

The main goal of \cite{yelton2024branch} is to find a method of determining whether a given $(2g + 2)$-element subset $S \subset \proj_K^1$ is $p$-superelliptic.  An algorithm is provided (as \cite[Algorithm 4.2]{yelton2024branch}) in which a non-empty even-cardinality input subset $S \subset \proj_K^1$ is transformed through a sequence of modifications called \emph{foldings} (which are bijections $\phi: S \to S' \subset \proj_K^1$ satisfying certain properties) which do not affect the associated group $\Gamma$, checking at each step that the resulting set $S'$ is clustered in $\frac{v(p)}{p - 1}$-separated pairs, until eventually, in the case that the input set $S$ is $p$-superelliptic, it is transformed into a set $\Smin$ satisfying a property which is called \emph{optimality} (see \cite[Definition 3.12]{yelton2024branch}) for which there are no more ``good foldings'' to be performed.  It is shown in particular (as \cite[Lemma 3.18]{yelton2024branch}) that an optimal set is $p$-superelliptic.

\subsection{Our main result} \label{sec intro main result}

Our main purpose in this paper is to compare a $p$-superelliptic set $S$ with its image $\mathcal{B}$ modulo the action of $\Gamma_0$ (the branch points of the resulting superelliptic curve), specifically in terms of the combinatorial data of the distances (under the metric induced by the discrete valuation $v$) between elements of $S$ and the corresponding elements in $\mathcal{B}$.  In other words, we are interested in comparing the \emph{cluster data} of $S$ with that of its modulo-$\Gamma_0$ image $\mathcal{B}$ (see \Cref{dfn cluster} below).  The main inspiration for this general question is a conjecture posed by Gerritzen and van der Put on page 282 of their book \cite{gerritzen2006schottky}, which may be paraphrased in the language of clusters (as \Cref{conj GvanderP clusters} below) as essentially stating that a subset $\mathfrak{s} \subset S$ is a cluster if and only its image modulo the action of $\Gamma_0$ is a cluster of $\mathcal{B}$.  This conjecture was formulated only in a context where $p = 2$ and the residue characteristic of $K$ is not $2$.

We are able to resolve this conjecture by showing that it is false in general as stated (see \Cref{rmk problem with conjecture} below) but that it is true when $S$ satisfies the condition of \emph{optimality} discussed in the previous subsection.  (We have shown as \cite[Corollary 3.23, Remark 3.27]{yelton2024branch} that in the special case studied by Kadziela in his dissertation \cite[Chapters 5, 6]{kadziela2007rigid}, the set $S$ is optimal, thus recovering Kadziela's result that the conjecture holds in this case.)  We assert this in the more general context of removing all conditions on $p$ and on the residue characteristic of $K$.  We are moreover able to directly compare the cluster data of the optimal set $S$ with that of its image $\mathcal{B}$.  When $p$ is not the residue characteristic of $K$, this comparison is simple to state; otherwise, a general formula relating the relative depth of a cluster of $S$ with that of its image cluster of $\mathcal{B}$ is too unpleasant to write down here, but such a formula becomes simple in certain cases such as when no cluster of $S$ is itself the union of $\geq 2$ even-cardinality sub-clusters.  Our main result (in the cases that are not too cumbersome to write down) is summed up in the following theorem.

\begin{thm}[cluster version] \label{thm main clusters}

Let $S \subset \proj_K^1$ be an optimal subset with associated $p$-Whittaker group $\Gamma_0$ and superelliptic curve $C \cong \Omega / \Gamma_0$ with branch points $\mathcal{B} \subset \proj_K^1$; and write $\pi : S \to \mathcal{B}$ for the bijection corresponding to reduction modulo $\Gamma_0$.  Assume that $\pi(b_g) = b_g = \infty$.

\begin{enumerate}[(a)]

\item A subset $\mathfrak{s} \subset S$ with $2 \leq \#\mathfrak{s} \leq 2g$ is a cluster of $S$ if and only if its image $\pi(\mathfrak{s}) \subset \mathcal{B}$ is a cluster of $\mathcal{B}$.

\item Given a cluster $\mathfrak{s}$ of $S$ with $2 \leq \#\mathfrak{s} \leq 2g$, in certain cases the relative depth of $\pi(\mathfrak{s})$ can be compared to that of $\mathfrak{s}$ as follows; below we write $\mathfrak{s}'$ for the smallest cluster of $S$ of cardinality $\geq 2$ and $\leq 2g$ which properly contains a cluster $\mathfrak{s}$.

\begin{enumerate}[(i)]
\item If $\mathfrak{s}$ has odd cardinality, then we have 
\begin{equation} \delta(\pi(\mathfrak{s})) = p\delta(\mathfrak{s}). \end{equation}
\item If $\mathfrak{s}$ has even cardinality and if $p$ is not the residue characteristic of $K$, then we have 
\begin{equation} \delta(\pi(\mathfrak{s})) = \delta(\mathfrak{s}). \end{equation}
\item If $\mathfrak{s}$ has even cardinality and if neither $\mathfrak{s}$ nor $\mathfrak{s}'$ is the union of $\geq 2$ even-cardinality sub-clusters, then we have 
\begin{equation} \delta(\pi(\mathfrak{s})) = \delta(\mathfrak{s}) + 2v(p).\end{equation}
\end{enumerate}

\end{enumerate}

\end{thm}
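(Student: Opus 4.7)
The plan is to interpret the problem on the Berkovich projective line $\Berk$, where clusters of $S$ correspond to vertices of the minimal subtree $T_S$ spanned by $S$ and cluster depths translate to edge-lengths in $T_S$. The quotient map $\Omega \to \Omega/\Gamma_0 \cong \proj^1_K$ descends to a map of Berkovich trees, so one can compare the combinatorial and metric data of $T_S$ with that of the subtree $T_{\mathcal{B}}$ spanned by $\mathcal{B}$. Each generator $s_i \in \Gamma_0$, being an order-$p$ element of $\PGL_2(K)$ with two fixed points $a_i, b_i$, acts on $\Berk$ by rotation about the axis connecting $a_i$ to $b_i$; by optimality of $S$, these axes interact with $T_S$ in a controlled way, intersecting it along the edge-path from $a_i$ to $b_i$ through the cluster-vertex of the minimal cluster containing both.

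To establish part (a), I would use the characterization of clusters via vertices of the Berkovich subtree. Given a cluster $\mathfrak{s}$ of $S$, the associated minimal-disc vertex in $\Berk$ maps to a vertex of $T_{\mathcal{B}}$, and optimality ensures this target vertex is precisely the minimal-disc vertex of $\pi(\mathfrak{s})$. For the converse direction, where the conjecture of Gerritzen--van der Put fails without optimality and where the optimality hypothesis is essential, one verifies that any candidate cluster vertex in $T_{\mathcal{B}}$ pulls back via the quotient to a genuine cluster-vertex of $T_S$. The point is that optimality precludes the existence of a ``hidden folding'' that would cause the $\Gamma_0$-action to merge or split cluster-vertices inappropriately, since by \cite[Algorithm 4.2]{yelton2024branch} no such folding is permissible on an optimal set.

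Part (b) reduces to computing how edge-lengths in $T_S$ transform under the quotient. For case (i), an odd-cardinality cluster $\mathfrak{s}$ must contain exactly one of the two fixed points $a_i, b_i$ of some generator $s_i$ (since fixed points come in pairs within $S$), so the edge-path in $T_S$ from the $\mathfrak{s}$-vertex to the $\mathfrak{s}'$-vertex traverses a fixed vertex of $\langle s_i \rangle$; taking the $\langle s_i \rangle$-quotient identifies $p$ rotational edge-sectors into one, and on the complementary non-fixed portion of $T_S$ the metric scales by $p$, yielding $\delta(\pi(\mathfrak{s})) = p\delta(\mathfrak{s})$. In case (ii) an even-cardinality cluster $\mathfrak{s}$ contains each pair $\{a_i, b_i\}$ either entirely or not at all, so the relevant edge-path avoids the fixed vertex of any generator acting nontrivially along it; tame ramification then preserves the metric and gives $\delta(\pi(\mathfrak{s})) = \delta(\mathfrak{s})$.

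The main obstacle is case (iii), where wild ramification distorts the metric near fixed vertices. Here the extra hypothesis that neither $\mathfrak{s}$ nor $\mathfrak{s}'$ is a union of $\geq 2$ even-cardinality sub-clusters forces exactly two distinct axes of order-$p$ elements to enter the portion of $T_S$ between the $\mathfrak{s}$-vertex and the $\mathfrak{s}'$-vertex, each contributing an additive correction of $v(p)$ to the downstairs edge-length. This correction is derived from the explicit computation of $v(z^p - w^p)$ for $z, w$ near a fixed point in the wild regime $v(z - w) \geq v(p)/(p-1)$, which yields $v(z^p - w^p) = v(z - w) + v(p)$. Summing the two contributions gives $\delta(\pi(\mathfrak{s})) = \delta(\mathfrak{s}) + 2v(p)$ as claimed; cases excluded by the hypothesis require tracking more than two such contributions and produce the more elaborate general formula alluded to in the introduction, which is why the statement is phrased only in the cases above.
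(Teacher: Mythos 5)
Your reduction to Berkovich geometry is the same first step the paper takes: clusters of $S$ correspond to vertices of the convex hull $\Sigma_S$ and relative depths to distances (\Cref{prop vertex cluster correspondence}, \Cref{prop depths distances}), and once one knows that $\pi$ extends to a homeomorphism $\pi_*\colon \Sigma_S \to \Sigma_{\mathcal{B}}$ dilating the radius-$\frac{v(p)}{p-1}$ tubular neighborhood of each axis $\Lambda_{(i)}$ by a factor of $p$ and preserving the metric elsewhere, parts (a) and (b) follow essentially as you describe (this deduction is exactly \Cref{prop berkovich version implies cluster version}). The genuine gap is that you take this extension and its metric behavior for granted: you argue as though the map to $\mathcal{B}$ were, near each axis, just the quotient by the single rotation $\langle s_i \rangle$, so that the computation $v(z^p - w^p) = v(z - w) + v(p)$ settles the wild case. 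But the relevant map is the quotient by the whole infinite group $\Gamma_0$, realized analytically by the theta function $\Theta^{\Gamma_0}_{a_i, b_i}(z) = \prod_{\gamma \in \Gamma_0} \frac{z - \gamma(a_i)}{z - \gamma(b_i)}$, and the entire difficulty of the paper is to show that the factors with $\gamma \neq 1$ contribute only higher-valuation corrections (\Cref{lemma N_gamma}, \Cref{thm one segment}, \Cref{prop one segment}), that the theta function genuinely induces a map on $\Sigma_S$ (meromorphy on annuli plus the mapping theorem for annuli, \Cref{prop meromorphic}), and that the local descriptions glue to an injective map on all of $\Sigma_S$ (\S\ref{sec proof of main whole}). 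This is also precisely where optimality enters quantitatively, via \Cref{lemma off the convex hull variant}, \Cref{cor S in Omega} and \Cref{lemma at most one n}; your proposal invokes optimality only as a vague ``no hidden folding'' principle in part (a) and not at all in the metric computations of (b). Since \Cref{rmk problem with conjecture} exhibits two generating sets of fixed points for the same $\Gamma_0$ (hence the same $\mathcal{B}$) with genuinely different combinatorial cluster data, an argument whose metric comparison never concretely uses optimality proves too much and cannot be complete: the missing content is, in effect, all of \Cref{thm main berk}.

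Two smaller points. Your local description in (b)(i) is internally inconsistent: there you say the metric is scaled by $p$ ``on the complementary non-fixed portion,'' while in (b)(ii) you (correctly) need the metric to be preserved away from the axes. What actually happens is that for an odd-cardinality cluster the whole path from the $\mathfrak{s}$-vertex to the $\mathfrak{s}'$-vertex lies inside a single axis $\Lambda_{(i)}$ (the disc of each interior point meets $\{a_i, b_i\}$ in exactly one element), and it is the axis -- more precisely its radius-$\frac{v(p)}{p-1}$ neighborhood -- that is dilated by $p$; that is how one obtains $\delta(\pi(\mathfrak{s})) = p\,\delta(\mathfrak{s})$. Finally, the converse half of part (a) is asserted rather than argued in your sketch; in the paper it is automatic once $\pi_*$ is known to be a homeomorphism carrying axes to axes, hence inducing a bijection on distinguished and natural vertices -- which again rests on \Cref{thm main berk}, the statement your outline does not supply.
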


\begin{rmk} \label{rmk no encumbrance}

The hypothesis in the above theorem that $\beta_g = b_g = \infty$ is no encumbrance to finding the cluster data of the set $\mathcal{B}$ branch points of a superelliptic curve $C$ which is uniformized using the Schottky group associated to a set $S$ of fixed points.  If, in our situation, we have $b_g \neq \infty$, then it is easy to see that we may choose a fractional linear transformation $\sigma \in \PGL_2(K)$ such that $\sigma(b_g) = \infty$ and replace $S$ and its associated groups $\Gamma \lhd \Gamma_0$ with $\sigma(S)$ and $\sigma \Gamma \sigma^{-1} \lhd \sigma \Gamma_0 \sigma^{-1}$ respectively and that the conjugate $\sigma \Gamma_0 \sigma^{-1}$ can be used to uniformize the same curve $C$.  Meanwhile, if we have $\beta_g \neq \infty$, then we may similarly choose a fractional linear transformation $\tau \in \PGL_2(K)$ such that $\tau(\beta_g) = \infty$ and replace $\mathcal{B}$ with $\sigma(\mathcal{B})$; applying the automorphism $\tau$ to the projective line $\proj_K^1$ (in order to modify the set of branch points in this way) induces an isomorphism between models of $C$ given by the equation in (\ref{eq split degenerate superelliptic}) in terms of sets $\mathcal{B}$ of branch points.  Meanwhile, there is an easy formula relating the cluster data of a finite subset $A \subset \proj_K^1$ with that of the subset $\sigma(A) \subset \proj_K^1$ for any automorphism $\sigma \in \PGL_2(K)$ (see, for instance, \cite[Remark 5.7]{fiore2023clusters}).

\end{rmk}

We now present our main result not in terms of clusters but in the language of convex hulls in the Berkovich projective line, which removes any need to restrict to certain hypotheses on a given even-cardinality cluster $\mathfrak{s}$ or to assume that $\infty$ lies in either $S$ or $\mathcal{B}$.

\begin{thm}[Berkovich version] \label{thm main berk}

Let $S \subset \proj_K^1$ be an optimal subset with associated $p$-Whittaker group $\Gamma_0$ and superelliptic curve $C \cong \Omega / \Gamma_0$ with branch points $\mathcal{B} \subset \proj_K^1$; and write $\pi : S \to \mathcal{B}$ for the bijection corresponding to reduction modulo $\Gamma_0$.  Viewing $S \subset \Berk$ as a subset consisting of points of Type I in the Berkovich projective line $\Berk$, write $\Sigma_S$ for the convex hull of $S$ (\textit{i.e.} the smallest connected subspace of $\Berk$ which contains $S$), and define the convex hull $\Sigma_{\mathcal{B}}$ analogously.

The map $\pi$ extends to a homeomorphism $\pi_*: \Sigma_S \to \Sigma_{\mathcal{B}}$ which affects distances between points (with respect to the hyperbolic metric -- see \Cref{dfn berk} below) according to the following formula.  For $0 \leq i \leq g$, let $\Lambda_{(i)} \subset \Berk$ denote the (unique) non-backtracking path connecting the points $a_i$ and $b_i$.  For any points $v, w \in \Berk$ of Type II or III, let $[v, w] \subset \Berk$ denote the (unique) non-backtracking path connecting them, and let $\llbracket v, w \rrbracket \subseteq [v, w]$ be the subspace consisting of points $\eta$ of distance $\leq \frac{v(p)}{p - 1}$ from one of the paths $\Lambda_{(i)}$.  The subspace $\llbracket v, w \rrbracket$ is a disjoint union of shorter paths in $\Berk$; define $\mu(v, w)$ to be the sum of the lengths (i.e. distances between endpoints) of these segments.  For any points $v, w \in \Sigma_S \smallsetminus \{\eta_{a_i}, \eta_{b_i}\}_{0 \leq i \leq g}$, we have 
\begin{equation} \label{eq dilation}
\delta(\pi_*(v), \pi_*(w)) = \delta(v, w) + (p - 1) \mu(v, w).
\end{equation}

\end{thm}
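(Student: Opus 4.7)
\emph{Strategy and local analysis.} I would construct $\pi_*$ as the restriction to $\Sigma_S$ of the Berkovich quotient map $q : \Omega^{\mathrm{an}} \to (\Omega/\Gamma_0)^{\mathrm{an}} \cong \Berk$. The main local input is the action of each $s_i \in \Gamma_0$ on $\Berk$: being an order-$p$ element of $\PGL_2(K)$ fixing $a_i, b_i$, after a change of coordinates placing $a_i, b_i$ at $0, \infty$, it acts as $z \mapsto \zeta_p z$. A standard Berkovich calculation (expressing fixed discs via the condition $|c| \cdot |\zeta_p - 1| \leq r$) shows that the fixed locus of $s_i$ in $\Berk$ is the closed tube $T_i$ of hyperbolic radius $\frac{v(p)}{p - 1}$ around $\Lambda_{(i)}$; outside $T_i$ the element $s_i$ permutes $p$ tangent branches cyclically and freely. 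In the same coordinates the $\langle s_i \rangle$-quotient $z \mapsto z^p$ sends a disc $D(c, r)$ (with $c \ne 0$) to $D(c^p, r^p)$ when $r \geq |c| \cdot |p|^{1/(p - 1)}$ (source inside $T_i$) and to $D(c^p, r|p||c|^{p - 1})$ otherwise; translating these into hyperbolic distances yields that $q$ dilates lengths by $p$ on the portion of any geodesic lying inside $T_i$ and preserves lengths on the portion outside.

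\emph{Metric formula.} Once we know that locally near $T_i \cap \Sigma_S$ the full $\Gamma_0$-quotient agrees with the $\langle s_i \rangle$-quotient (see claim (ii) below), the formula (\ref{eq dilation}) is immediate by decomposing $[v, w] \subset \Sigma_S$ into sub-paths inside and outside $\bigcup_i T_i$: inside pieces have total length $\mu(v, w)$ and contribute $p \mu(v, w)$ under $\pi_*$, while outside pieces have total length $\delta(v, w) - \mu(v, w)$ and are preserved, yielding $\delta(\pi_*(v), \pi_*(w)) = \delta(v, w) + (p - 1)\mu(v, w)$.

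\emph{Main obstacle: homeomorphism.} The critical step is proving that $q|_{\Sigma_S}$ is a homeomorphism onto $\Sigma_\mathcal{B}$: surjectivity is easy (connectedness together with $\pi(S) \subset q(\Sigma_S)$), but injectivity requires genuine input from optimality. I plan to establish three subclaims: (i) the tubes $T_i$ are pairwise disjoint inside $\Sigma_S$, which is equivalent to $S$ being clustered in $\frac{v(p)}{p - 1}$-separated pairs and is therefore necessary for $p$-superellipticity by \cite[Remark 2.15(a)]{yelton2024branch}; (ii) $\Sigma_S$ avoids every nontrivial $\Gamma_0$-translate $\gamma T_i$; and (iii) for $v \in \Sigma_S \smallsetminus \bigcup_i T_i$ and any $\gamma \in \Gamma_0 \smallsetminus \{1\}$, the translate $\gamma v$ lies outside $\Sigma_S$. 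Claim (iii) is the heart of the matter: using a reduced expression for $\gamma$ as an alternating word in the $s_i$ afforded by the free-product decomposition $\Gamma_0 \cong \mathbb{Z}/p \ast \cdots \ast \mathbb{Z}/p$, each letter cyclically permutes tangent branches across the corresponding $T_i$, and the combinatorial constraints underlying optimality (as encoded in the folding algorithm \cite[Algorithm 4.2]{yelton2024branch}) should ensure that any nontrivial such word pushes $v$ out of $\Sigma_S$ via a ping-pong style argument.
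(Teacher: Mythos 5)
Your route is genuinely different from the paper's. The paper never analyzes $\pi$ as a quotient map of Berkovich analytic spaces: it writes the uniformization explicitly as a theta function, with $\Theta^{\Gamma_0}_{a,b} = (\Theta^{\Gamma}_{a,b})^p$, proves a valuation approximation for its values at suitable inputs (\Cref{thm one segment}), converts this into an exact description of the image of a single segment of $\Sigma_S$ via the annulus-image theorem of \cite{benedetto2019dynamics} (\Cref{prop one segment}, \Cref{cor one segment}), and then glues these descriptions over all neighboring pairs of distinguished vertices by conjugating with fractional linear transformations (\Cref{lemma gluing}). Your local picture is nonetheless correct: the fixed locus of $s_i$ in $\Berk$ is exactly $\hat{\Lambda}_{(i)} = B(\Lambda_{(i)}, \frac{v(p)}{p-1})$, your disc-image formulas for $z \mapsto z^p$ agree with (\ref{eq formula for P_*}), and, granted your subclaims (i)--(iii) together with the ``local agreement'' premise, the bookkeeping leading to (\ref{eq dilation}) is sound (local injectivity plus local dilation along a path in an $\rr$-tree does globalize). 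If completed, your approach is more conceptual and isolates the group action as the only input; the paper's trades that for explicit estimates, but obtains as a by-product quantitative information about the theta function itself.

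The genuine gap is that subclaims (ii) and (iii), which you correctly identify as the heart of the matter, are not proved: you only assert that optimality ``should ensure'' them via a ping-pong argument. This is exactly where all the content lies. Observe that (ii)--(iii) cannot hold for every $p$-superelliptic set: if they did, your argument would prove the theorem without the optimality hypothesis, contradicting the counterexample of \Cref{rmk problem with conjecture}; since the free-product structure of $\Gamma_0$ is available for every $p$-superelliptic set, no ping-pong argument using only that structure can succeed, and optimality must enter quantitatively. The statement you need is essentially \cite[Lemma 3.16]{yelton2024branch}, adapted in this paper as \Cref{lemma off the convex hull variant}; you should cite it or reprove it. You also need slightly more than (ii)--(iii) as stated: injectivity at points of $\Sigma_S \cap \hat{\Lambda}_{(i)}$ requires that the setwise stabilizer of $\hat{\Lambda}_{(i)}$ in $\Gamma_0$ is $\langle s_i \rangle$, and the premise that near $\hat{\Lambda}_{(i)} \cap \Sigma_S$ the $\Gamma_0$-quotient agrees with the $\langle s_i \rangle$-quotient is a proper-discontinuity statement about pairs of nearby points (not necessarily on $\Sigma_S$), which is not literally implied by (ii) and (iii). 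Two smaller omissions: your surjectivity remark only yields $\Sigma_{\mathcal{B}} \subseteq \pi_*(\Sigma_S)$, and the reverse inclusion still needs the endpoint argument used at the end of \S\ref{sec proof of main whole}; and the identification of the analytic quotient of the Berkovich domain by $\Gamma_0$ with $\Berk$, compatibly with $\pi$ on Type I points and with the metric on skeleta, requires citation or proof --- this is precisely the step the paper sidesteps by working with explicit theta functions and \cite[Theorem 7.12]{benedetto2019dynamics}.
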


\begin{rmk} \label{rmk dilation}

It follows from\cite[Remark 2.15]{yelton2024branch} or from \Cref{prop clustered in pairs} below that the set $S$ in the statement of \Cref{thm main berk}, by being $p$-superelliptic, satisfies that the axes $\Lambda_{(i)}$ are pairwise disjoint and at a distance of $> \frac{2v(p)}{p - 1}$.  The formula in \Cref{thm main berk} relating distances between points of $\Sigma_S$ to distances between their images in $\Sigma_{\mathcal{B}}$ can be described more visually as follows.  The map $\pi_*$ ``transforms'' the space $\Sigma_S$ into the space $\Sigma_{\mathcal{B}}$ simply by \textit{dilating} each axis $\Lambda_{(i)}$ by a factor of $p$, if $p$ is not the residue characteristic, and leaving the rest of the space unchanged with respect to the metric.  This generalizes to the case of residue characteristic $p$ by, instead of dilating only each axis $\Lambda_{(i)}$, dilating the \textit{tubular neighborhood} of radius $\frac{v(p)}{p - 1}$ of each axis $\Lambda_{(i)}$ by a factor of $p$.

\end{rmk}

The fact that \Cref{thm main berk} implies \Cref{thm main clusters} will be given as \Cref{prop berkovich version implies cluster version} below, while the results in \S\ref{sec berk convex hulls} will show that \Cref{thm main berk} allows us to compute the relative depths of clusters of $\mathcal{B}$ in the cases not covered by the statement of \Cref{thm main clusters}, so that \Cref{thm main berk} is stronger than \Cref{thm main clusters}.  We also mention that, as discussed in \S\ref{sec intro previous results}, one may use \cite[Algorithm 4.2]{yelton2024branch} to turn any $p$-superelliptic set into an optimal set; therefore, \Cref{thm main berk} enables us, given \textit{any} $p$-superelliptic set $S$, to determine the metric graph isomorphism type of $\Sigma_{\mathcal{B}}$ (or, equivalently, the cluster data of $\mathcal{B}$ by the results of \S\ref{sec berk convex hulls},\ref{sec berk dictionary} below), where $\mathcal{B} \subset \proj_K^1$ is the set of branch points of the superelliptic curve associated to $S$.

\subsection{Outline of the paper} \label{sec intro outline}

We postpone the proof of our main result to \S\ref{sec proof of main} and use \S\ref{sec GvanderP},\ref{sec berk} to establish three different ways to discuss the non-archimedean combinatorial properties of a finite subset $A \subset \proj_K^1$ and how to translate between them, in order to understand the relationship between the conjecture as originally stated by Gerritzen and van der Put and our two variants of it in the form of Theorems \ref{thm main clusters} and \ref{thm main berk}.  These three aspects of the set $A$ are its \textit{position}, its \textit{cluster data}, and the \textit{metric graph properties of its convex hull} in the Berkovich projective line.  Each of these frameworks has its advantages and its disadvantages.  Looking at the position of $A$ is the framework used by Gerritzen and van der Put in many places in their book \cite{gerritzen2006schottky} and is directly tied to constructing useful models of the projective line, but position conveys less information than cluster data and the convex hull do (specifically, the depths of clusters and the lengths of segments of the convex hull are not reflected by the position).  The language of cluster data has recently become popular as it relates to many applications involving the arithmetic of hyperelliptic and superelliptic curves (\cite{dokchitser2019semistable, dokchitser2022arithmetic} being the originating examples), and cluster data is easy to immediately compute, but it has the downside of being somewhat affected by automorphisms of the projective line (albeit in a way that can be described by easy formulas) while position and metric graph properties of the convex hull are independent of the chosen coordinate of the projective line; it is for this reason that our results stated in terms of clusters often require an extra hypothesis to be articulated succinctly.  Moreover, some hypotheses and results are cumbersome to describe in terms of clusters, which is why only certain cases are fully described in the statement of \Cref{thm main clusters}.  Viewing a set $A$ through its convex hull and studying its metric graph properties, while less accessible of a framework than that of position or cluster data, is the most powerful both in terms of ease of stating hypotheses and results and ease of proving them directly.  It is for this reason that the result that we will directly prove is \Cref{thm main berk} and our methods of proving it mainly involve convex hulls and other subspaces of the Berkovich projective line.

In \S\ref{sec GvanderP}, we examine the aforementioned conjecture of Gerritzen and van der Put, putting it as precisely as possible by rigorously defining what they mean by ``position'' of a finite subset of $\proj_K^1$ (in \S\ref{sec GvanderP position}) and then translating their conjecture into the language of clusters (in \S\ref{sec GvanderP clusters}).  We then show in \S\ref{sec GvanderP counterexample} that without adding the hypothesis that the set $S$ is optimal as in \Cref{thm main clusters}, the conjecture does not hold.

The object of \S\ref{sec berk} is then to reframe everything in terms of the convex hull $\Sigma_S$ of the subset $S \subset \proj_K^1$ as a subset of the Berkovich projective line $\Berk$ consisting of points of Type I, culminating in a proof that the ``Berkovich version'' of our main result (\Cref{thm main berk}) implies the ``cluster version'' of our main result (\Cref{thm main clusters}).  This is done by introducing (a slightly simplified version of) the Berkovich projective line $\Berk$ in \S\ref{sec berk berk}, studying convex hulls in $\Berk$ in \S\ref{sec berk convex hulls}, and directly relating the metric graph structure of the convex hull of $A$ to its cluster data in \S\ref{sec berk dictionary}.

The actual proof of \Cref{thm main berk} takes up \S\ref{sec proof of main}, which is the heart of this paper and broken up into five subsections, the first two of which provide background information about the action of $\PGL_2(K)$ on convex hulls and the map $\pi : \Omega \twoheadrightarrow \Omega / \Gamma \cong C$ as an explicit theta function, and the last three of which break the proof of \Cref{thm main berk} into three parts.  Along the way, we will prove a result (\Cref{thm one segment}) which approximates the outputs of $\pi$ on a certain subset of its domain $\Omega$, which is interesting in its own right.

We finish the paper with a corollary to our main result which describes a property that generally holds for the branch points of a split degenerate $p$-cyclic cover of the projective line.

\subsection{Acknowledgements}

The author is grateful to Christopher Rasmussen for helpful discussions that took place during the process of developing these results.  The author would also like to thank Robert Benedetto for conversations which enabled him to frame the main result in terms of map on subsets of the Berkovich projective line which is naturally induced by the uniformizing map of the superelliptic curve in the proof of \Cref{thm main berk} and to make the proof more rigorous.

\section{A conjecture of Gerritzen and van der Put} \label{sec GvanderP}

On \cite[p. 282]{gerritzen2006schottky}, Gerritzen and van der Put make a conjecture regarding the relationship between the set $S$ of fixed points of generators of a $p$-Whittaker group $\Gamma_0$ and its image $\mathcal{B}$ under reduction modulo $\Gamma_0$, which is the set of branch points of the resulting superelliptic curve.  The conjecture is worded so as to say that the ``position'' of $S$ and the ``position'' of $\mathcal{B}$ are ``identical''.  This conjecture is restated by Kadziela in his dissertation as \cite[Conjecture 3.1]{kadziela2007rigid}.  In \S\ref{sec GvanderP position}, we make the statement of this conjecture rigorous by carefully defining \emph{position}, and in \S\ref{sec GvanderP clusters}, we introduce the language of clusters and translate the conjecture so that it can be expressed in this language.  Then in \S\ref{sec GvanderP counterexample}, we show by counterexample that the conjecture is not actually true.  As it will turn out that the conjecture becomes true with the addition of a hypothesis on $S$, our work in this section is still important in allowing us to properly define what it means to ``have identical position'' and to express it in the language of clusters.

\subsection{Statement of the conjecture in terms of position} \label{sec GvanderP position}

Although the term ``position'' is not defined in either \cite{gerritzen2006schottky} or \cite{kadziela2007rigid} in very precise language, one can understand the meaning to be as follows.  Let $R \subset K$ be the ring of integers, and let $k$ be the residue field of the local ring $R$.  Given an ordered $3$-element subset $\underline{z} = \{z_0, z_1, z_\infty\} \subset \proj_K^1$, there is a unique automorphism $\gamma_{\underline{z}} \in \PGL_2(K)$ which sends $z_i$ to $i \in \proj_K^1$ for $i = 0, 1, \infty$.  Composing this with the reduction map $R \to k$, we get a map $\bar{\gamma}_{\underline{z}} : \proj_K^1 \to \proj_k^1$.  In \cite[\S I.2]{gerritzen2006schottky}, Gerritzen and van der Put define a tree $T(A)$ whose vertices correspond to equivalence classes of ordered $3$-element subsets $\underline{z}$ of $A$, where subsets $\underline{z}, \underline{w} \subset \proj_K^1$ are equivalent if the automorphism $\bar{\gamma}_{\underline{w}} \bar{\gamma}_{\underline{z}}^{-1} \in \PGL_2(k)$ is invertible.  Writing $|T(A)|$ for the set of vertices of the graph $T(A)$, let us define the map 
\begin{equation*}
R_A : \proj_K^1 \to (\proj_k^1)^{\#|T(A)|}
\end{equation*}
to be the product of the maps $\bar{\gamma}_{\underline{z}}$, where $\underline{z}$ ranges over a set of representatives of each equivalence class of ordered $3$-element subsets $\underline{z}$ of $A$.  The image $R_A(\proj_K^1)$ is a curve over $k$ whose components are all isomorphic to $\proj_k^1$ and intersect only at ordinary double points, none of which is the image of a point in $A$; the components correspond to the vertices of the graph $T(A)$.  See \cite[\S I.4.2]{gerritzen2006schottky} for more details and proofs of these assertions.  (We moreover note that the image of $R_A$ is in fact the special fiber of a model of $\proj_R^1$ which is minimal with respect to the condition that the images of the $R$-points of this model extending the points in $A$ do not intersect in the special fiber.)  We may now define ``position'' as follows.

\begin{dfn} \label{dfn position}

The \emph{position} of a finite subset $A \subset \proj_K^1$ of cardinality $\geq 3$ is the combinatorial data of the tree $T(A)$ along with the map $r_A: A \to |T(A)|$ given by sending a point $z \in A$ to the vertex of $T(A)$ corresponding to the (unique) component of $R_A(\proj_K^1)$ which contains $R_A(z)$.

Given two finite subsets $A, A' \subset \proj_K^1$ of (equal) cardinality $\geq 3$ and a bijection $\varphi : A \to A'$, the sets $A$ and $A'$ are said to \emph{have the same position} if there is an graph isomorphism $T(A) \stackrel{\sim}{\to} T(A')$ making the below diagram commute.
\vspace{-1em}
\begin{equation} \label{eq position}
\xymatrix{ A \ar[r]^{\!\!\!\!r_A} \ar[d]^\varphi & |T(A)| \ar[d]^\wr 
\\ A' \ar[r]^{\!\!\!r_{A'}} & |T(A')| }
\end{equation}

\end{dfn}

It is clear directly from definitions that the position of a subset $A \subset \proj_K^1$ does not change after applying an automorphism in $\PGL_2(K)$ to the whole subset.  Now the conjecture of Gerritzen and van der Put may be presented as follows.

\begin{conj}[Gerritzen and van der Put, 1980] \label{conj GvanderP}

With the above set-up, the $(2g + 2)$-element subsets $S \subset \proj_K^1$ and $\mathcal{B} \subset \proj_K^1$ have the same position.

\end{conj}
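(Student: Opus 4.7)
The plan is to lift the problem to the Berkovich projective line $\Berk$, reinterpret ``same position'' as a purely combinatorial statement about labeled trees attached to convex hulls, and exploit the action of $\Gamma_0$ to transfer the combinatorial type from $\Sigma_S$ to $\Sigma_{\mathcal{B}}$. The first step is to identify the tree $T(A)$ of \Cref{dfn position}, together with the marking $r_A$, with the combinatorial type (edge lengths forgotten) of the convex hull $\Sigma_A \subset \Berk$ labeled by $A$ at its leaves: vertices of $T(A)$ should correspond to Type II branching points of $\Sigma_A$, and $r_A(z)$ should record to which branching vertex the leaf labeled $z$ attaches. This identification is the standard dictionary between semistable models of $\proj^1$ and finite subtrees of $\Berk$ implicit in \cite{gerritzen2006schottky}, and under it \Cref{conj GvanderP} becomes the assertion that the bijection $\pi : S \to \mathcal{B}$ extends to a combinatorial tree isomorphism $\Sigma_S \to \Sigma_{\mathcal{B}}$ compatible with leaf labels.

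Next I would promote $\pi$ to a continuous map $\pi_* : \Sigma_S \to \Sigma_{\mathcal{B}}$ by using that the uniformization morphism $\Omega \twoheadrightarrow \Omega/\Gamma_0 \cong \proj_K^1$ extends naturally to the relevant subspaces of the Berkovich line. Since $\Gamma_0$ is generated by order-$p$ elements $s_i$ whose axes $\Lambda_{(i)} \subset \Berk$ meet $\Sigma_S$, one should be able to describe $\pi_*$ piecewise: on segments of $\Sigma_S$ lying off all axes it acts as an isometry, while on segments contained in some $\Lambda_{(i)}$ it scales distances by $p$ (with a tubular modification of radius $v(p)/(p-1)$ in the wild-characteristic case). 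One would then verify that $\pi_*$ matches branching vertices bijectively: for a branch point $v \in \Sigma_S$, each of the $\geq 3$ directions at $v$ leads to some element of $S$, which $\pi$ sends to a distinct element of $\mathcal{B}$; by local injectivity of $\pi_*$ away from the fixed points $a_i, b_i$, the image $\pi_*(v)$ retains $\geq 3$ directions in $\Sigma_{\mathcal{B}}$ and is therefore a branch point, and the same argument applied to preimages should complete the combinatorial isomorphism.

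The hard part will be this last step. Because $\pi_*$ dilates the portions of $\Sigma_S$ lying on the axes $\Lambda_{(i)}$ by a factor of $p$, a short segment of $\Sigma_S$ that separates two sub-configurations of $S$ only by a brief off-axis detour can, after stretching, cross a region where a new branching vertex of $\Sigma_{\mathcal{B}}$ would have to be inserted; dually, a branching vertex of $\Sigma_S$ that lies too close to some axis can be moved under the dilation in a way that threatens its correspondence to a branching vertex of $\Sigma_{\mathcal{B}}$. Ruling out both phenomena requires a quantitative analysis of how far apart the axes $\Lambda_{(i)}$ sit from each other and from the branching points of $\Sigma_S$, presumably leveraging the clustering-in-pairs structure of $S$ supplied by \cite{yelton2024branch}. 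The bulk of a proof would then consist of carrying out this local geometric analysis carefully enough to confirm that no unwanted branching vertex appears or disappears under $\pi_*$, so that the induced map $T(S) \to T(\mathcal{B})$ really is a graph isomorphism matching $r_S$ with $r_{\mathcal{B}} \circ \pi$.
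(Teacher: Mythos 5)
Your proposal sets out to prove \Cref{conj GvanderP}, but the paper's treatment of this statement is a \emph{disproof}: \S\ref{sec GvanderP counterexample} (\Cref{rmk problem with conjecture}) exhibits an explicit counterexample, and the conjecture only becomes true after adding the hypothesis that $S$ is \emph{optimal} (this corrected statement is \Cref{thm main clusters}/\Cref{thm main berk}). Concretely, over $K = \qq_3(\zeta_3)$ the sets $S = \{-9, 9, 3, 12, 1, \infty\}$ and $S' = \{-9, 9, 27, 27/4, 1, \infty\}$ generate the same $p$-Whittaker group $\Gamma_0$ (since $s_1' = s_0 s_1 s_0$) and hence have the same image $\mathcal{B}$ modulo $\Gamma_0$, yet they have different positions: $S$ has three even-cardinality clusters of size $\geq 2$ while $S'$ has only two. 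At most one of them can have the same position as $\mathcal{B}$, so the conjecture as stated fails.

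The gap in your argument is exactly at the step you flag as "the hard part" but treat as a quantitative verification rather than as a genuine obstruction. Your construction of $\pi_*$ and the claim that it is locally injective away from the $a_i, b_i$ (so that branching vertices of $\Sigma_S$ map to branching vertices of $\Sigma_{\mathcal{B}}$ and conversely) silently requires that no nontrivial element of $\Gamma$ carries a point of $\Sigma_S$ back into $\Sigma_S$; this is precisely what optimality guarantees (via \Cref{lemma off the convex hull variant}, which rests on \cite[Lemma 3.16]{yelton2024branch}) and what fails for a general $p$-superelliptic set. When $S$ is not optimal, distinct points of $\Sigma_S$ can be identified modulo $\Gamma_0$, the induced map on convex hulls is not a homeomorphism onto $\Sigma_{\mathcal{B}}$, and branching vertices can be created or destroyed — which is exactly what happens in the example above. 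Your overall strategy (extend $\pi$ to $\pi_*$ on convex hulls, show it dilates axes by $p$ with a tubular correction of radius $v(p)/(p-1)$, and read off the position from the combinatorial type of the image tree) is essentially the paper's proof of the \emph{corrected} theorem, so the fix is not a more careful local analysis but the recognition that an additional hypothesis on $S$ is indispensable.
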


\subsection{Cluster data and position} \label{sec GvanderP clusters}

We introduce the language of \emph{clusters} and \emph{cluster data}, following its use in \cite{dokchitser2022arithmetic}, below.

\begin{dfn} \label{dfn cluster}

Let $A \subset \proj_K^1$ be a finite subset.  A subset $\mathfrak{s} \subseteq A$ is called a \emph{cluster} (of $A$) if there is some subset $D \subset K$ which is a disc under the metric induced by the valuation $v : K^\times \to \zz$ such that $\mathfrak{s} = A \cap D$.  The \emph{depth} of a cluster $\mathfrak{s}$ is the integer 
\begin{equation}
d(\mathfrak{s}) := \min_{z, z' \in \mathfrak{s}} v(z - z').
\end{equation}
Given a cluster $\mathfrak{s}$ of $A$ which is properly contained in another cluster of $A$, and letting $\mathfrak{s}' \subseteq A$ be the minimum cluster containing $\mathfrak{s}$, we define the \emph{relative depth} $\delta(\mathfrak{s})$ of $\mathfrak{s}$ to be the difference $d(\mathfrak{s}) - d(\mathfrak{s}')$.

The data of all clusters of a finite subset $A \subset \proj_K^1$ along with each of their depths (or relative depths) is called the \emph{cluster data} of $A$.  The data that only consists of all clusters of $A$ (that is, which subsets of $A$ are clusters, without considering depth) is called the \emph{combinatorial cluster data} of $A$.

\end{dfn}

\begin{rmk} \label{rmk reciprocal}

Applying a fractional linear transformation to a finite subset $A \subset \proj_K^1$ changes the combinatorial cluster data in a predictable way.  Every fractional linear transformation is a composition of translations, homotheties, and the reciprocal map $\iota: z \mapsto z^{-1}$.  Transformations and homotheties clearly do not affect the combinatorial cluster data, while the reciprocal map affects it if and only if not all elements of $A$ have the same valuation, in the following way.  Let $\mathfrak{s} \subset A$ be the subset of elements with maximal valuation; it is easy to see that $\mathfrak{s}$ is a cluster.  Then for each cluster $\mathfrak{c}$ of $A$ such that $\mathfrak{c} \cap \mathfrak{s} = \varnothing$ or $\mathfrak{c} \subsetneq \mathfrak{s}$, the image $\iota(\mathfrak{c})$ is a cluster of $\iota(A)$, while for each cluster $\mathfrak{c}$ of $A$ which contains $\mathfrak{s}$, the image $\iota(A \smallsetminus \mathfrak{c})$ is a cluster of $\iota(A)$, and this describes all clusters of $\iota(A)$.  It is also easy to describe what happens to the relative depths of the clusters; see \cite[Remark 5.7]{fiore2023clusters} for more details.

\end{rmk}

\begin{lemma} \label{lemma position and cluster data}

In the definition of $T(A)$ from \S\ref{sec GvanderP position}, each equivalence class of ordered triples has a representative $(z_0, z_1, z_\infty)$ satisfying $v(z_\infty - z_0) < v(z_1 - z_0)$, where we adopt the convention that $v(\infty - z) = \infty$ for $z \in K$.  Conversely, any ordered triple $(z_0', z_1', z_\infty')$ such that $v(z_\infty' - z_0') < v(z_1' - z_0') = v(z_1 - z_0) = v(z_i' - z_j)$ for all $i, j \in \{0, 1\}$ is equivalent to $(z_0, z_1, z_\infty)$.

\end{lemma}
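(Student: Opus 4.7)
My plan is to reinterpret the equivalence relation geometrically via medians in the Berkovich projective line $\Berk$ and then verify both parts of the lemma by an explicit identification of these medians.

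Since $\PGL_2(R)$ is the stabilizer in $\PGL_2(K)$ of the Gauss point $\xi_0$ of the closed unit disk in $\Berk$, two ordered triples $\underline{z}, \underline{w}$ are equivalent if and only if $\gamma_{\underline{w}} \gamma_{\underline{z}}^{-1}$ fixes $\xi_0$. Each $\gamma_{\underline{z}}$ acts on $\Berk$ as a tree isomorphism sending $\underline{z}$ to $(0, 1, \infty)$, so it sends the median of $\underline{z}$ in $\Berk$ (the common intersection of the three geodesics connecting pairs of points from the triple) to the median of $(0, 1, \infty)$, which is $\xi_0$. Thus equivalence of triples amounts to equality of their medians.

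Given a triple $(z_0, z_1, z_\infty)$ satisfying $v(z_\infty - z_0) < v(z_1 - z_0) =: d$, I would identify its median via a direct analysis of the three geodesics in $\Berk$. The strong triangle inequality gives $|z_\infty - z_1| = |z_\infty - z_0|$, and a short case analysis on these path segments then shows that the geodesics share exactly the Gauss point of the closed disk $D(z_0, |z_1 - z_0|) \subset \cc_K$ (assuming $z_1 \in K$; the degenerate case $z_1 = \infty$, in which the median is $\xi_{D(z_0, |z_\infty - z_0|)}$, is handled analogously under the stated convention on $v$).

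With this identification, the first assertion follows from the observation that any Gauss point $\xi_D$ in $\Berk$ (with $D \subset \cc_K$ a closed disk of radius in $|K^\times|$ and center in $K$) arises as the median of an admissible triple: pick $z_0, z_1 \in D$ in distinct residue classes (possible since $\#k \geq 2$) and any $z_\infty \in \proj_K^1 \smallsetminus D$. For the converse, the hypotheses $v(z_i' - z_j) = d$ for $i, j \in \{0, 1\}$ force $z_0', z_1' \in D(z_0, e^{-d})$, so $D(z_0', e^{-d}) = D(z_0, e^{-d})$; therefore the medians of the two triples coincide and the triples are equivalent. The main technical step is identifying the median explicitly as a specific Gauss point---an ultrametric case analysis---from which everything else follows formally.
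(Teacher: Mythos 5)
Your route is genuinely different from the paper's, and its core mechanism is sound. The paper argues directly with the reduction maps: it writes $\bar{\gamma}_{\underline{z}}$ explicitly as $z \mapsto$ the reduction of $(z_1 - z_0)^{-1}(z - z_0)$, checks that the relevant composite sends $0, 1, \infty$ to three distinct points of $\proj_k^1$, and concludes invertibility; no Berkovich geometry appears. You instead note that the paper's equivalence of triples says exactly that $\gamma_{\underline{w}}\gamma_{\underline{z}}^{-1}$ lies in $\PGL_2(R)$, i.e.\ fixes the Gauss point, so that two triples are equivalent if and only if their medians in $\Berk$ coincide; with the (correct, routine) identification of the median of an admissible triple as $\eta_D$ for $D$ the smallest disc containing $z_0, z_1$, your proof of the converse half of the lemma is complete and clean. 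This buys a coordinate-free picture that fits naturally with \S\ref{sec berk}, at the cost of invoking the standard fact that $\PGL_2(R)$ is the stabilizer of the Gauss point, whereas the paper's argument is self-contained and purely residue-theoretic.

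There is, however, a gap in your treatment of the first assertion. What the lemma must deliver (and what the paper's proof supplies, and what the proof of \Cref{prop position and cluster data} then uses) is that, starting from an arbitrary ordered triple drawn from $A$, one finds an equivalent admissible triple whose entries $z_0, z_1$ still belong to $A$ --- the paper keeps $z_0, z_1$ from the original triple and replaces only the third entry. Your argument instead shows that every Gauss point with $K$-rational data is the median of \emph{some} admissible triple, built from freely chosen $z_0, z_1 \in D$ in distinct residue classes and an arbitrary $z_\infty \notin D$; these points need not lie in $A$, and the construction never engages the given equivalence class beyond its median, so as written it proves a weaker statement than the one needed downstream. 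The repair inside your framework is short: given $(w_0, w_1, w_2)$ from $A$, either (after permuting) two entries are strictly closer to each other than to the third, in which case that permutation is already admissible and has the same median, or all three pairwise valuations are equal, in which case replacing the third entry by $\infty$ (or by any element of $A$ outside the common disc; note the paper's convention on $v(\infty - z)$ must be read as giving a very small value, i.e.\ $-\infty$, for the inequality to be satisfiable with $z_\infty = \infty$) leaves the median, hence the equivalence class, unchanged and yields an admissible representative with $z_0, z_1 \in A$.
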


\begin{proof}

Choose any ordered triple $\underline{w} = (z_0, z_1, w)$.  It is easy to verify, first of all, that any permutation of the coordinates $z_0, z_1, w$ of the ordered triple defining $\underline{z}$ does not affect the equivalence class, so after applying a suitable permutation, we assume that $v(z_1 - z_0) \geq \max\{v(w - z_0), v(w - z_1)\}$.  Choose an element $z_\infty \in A$ satisfying $v(z_\infty - z_0) = v(z_\infty - z_1) < v(z_1 - z_0)$ (such a $z_\infty$ certainly exists since we may take $z_\infty = \infty$), and let $\underline{z}$ be the ordered triple $(z_0, z_1, z_\infty)$.  It is straightforward to verify that the map $\bar{\gamma}_{\underline{z}} : \proj_K^1 \to \proj_k^1$ may be described as sending $z \in \proj_K^1$ to the reduction of $(z_1 - z_0)^{-1}(z - z_0)$.  Now it is visible from this formula that we have $\bar{\gamma}_{\underline{z}}(w) \notin \{1, 0\}$, so $\bar{\gamma}_{\underline{z}}$ sends $z_0, z_1, w$ to $3$ distinct points in $\proj_k^1$.  It follows that the composition $\bar{\gamma}_{\underline{z}} \bar{\gamma}_{\underline{w}}^{-1} \in \mathrm{M}_2(k)$ sends $0, 1, \infty \in \proj_k^1$ to $3$ distinct points.  It is now an elementary exercise to show that $\bar{\gamma}_{\underline{z}} \bar{\gamma}_{\underline{w}}^{-1}$ is invertible, so that $\underline{z}$ and $\underline{w}$ are in the same equivalence class.

Now let $z_\infty', z_1' \in A$ be elements satisfying $v(z_\infty' - v_0) < v(z_1' - z_0) = v(z_1 - z_0) = v(z_1' - z_1)$.  Then we apply what we have just shown to conclude that the ordered triple $(z_0, z_1, z_1')$ is equivalent both to $(z_0, z_1, z_\infty)$ and to $(z_0, z_1', z_\infty')$, and therefore the last two ordered triples are equivalent to each other.  Then by a similar argument, for any $z_0' \in A$ satisfying the hypotheses of the lemma, we see that the ordered triple $(z_0', z_1', z_\infty')$ is equivalent to all of these.
\end{proof}

\begin{prop} \label{prop position and cluster data}

Let $\varphi: A \to A'$ be a bijection of finite subsets of $\proj_K^1$, and assume that we have $\infty \in A \cap A'$ and that $\varphi(\infty) = \infty$.  Then the sets $A$ and $A'$ have the same position if and only if $\varphi$ acts as a bijection between the clusters of $A$ and those of $A'$.

\end{prop}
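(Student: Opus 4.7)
The plan is to translate the position data of $A$---the tree $T(A)$ together with the map $r_A$---entirely into cluster-theoretic language, under the hypothesis $\infty \in A$ (and likewise for $A'$), whereupon the proposition becomes immediate. First I would establish a bijection between $|T(A)|$ and the collection of clusters of $A$ of cardinality $\geq 2$. By \Cref{lemma position and cluster data}, every equivalence class of ordered triples admits a representative $(z_0, z_1, \infty)$ with $z_0, z_1 \in A \smallsetminus \{\infty\}$, and the value $\rho := v(z_1 - z_0)$ is an invariant of the class; I would then assign to such a class the unique cluster $\mathfrak{s}$ of $A$ containing $\{z_0, z_1\}$ and satisfying $d(\mathfrak{s}) = \rho$. (One observes that $\infty$ belongs to no disc of $K$, so every cluster is automatically contained in $A \smallsetminus \{\infty\}$.) Using the explicit formula $\bar{\gamma}_{\underline{z}}(z) = \overline{(z_1 - z_0)^{-1}(z - z_0)}$ from the proof of \Cref{lemma position and cluster data}, I would then check that, under this bijection, $r_A$ is described by: $r_A(z)$ corresponds to the smallest cluster of cardinality $\geq 2$ containing $z$ for $z \in A \smallsetminus \{\infty\}$, while $r_A(\infty)$ corresponds to the maximal cluster $A \smallsetminus \{\infty\}$; and the edges of $T(A)$ correspond to the covering relations in the poset of clusters.

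With this dictionary in hand, both directions of the proposition follow quickly. If $\varphi$ carries clusters of $A$ to clusters of $A'$, then the induced vertex correspondence is an isomorphism of rooted trees, and it makes diagram~(\ref{eq position}) commute because ``smallest cluster of cardinality $\geq 2$ containing $z$'' is a cluster-theoretic notion preserved by $\varphi$. Conversely, if there exists a graph isomorphism $T(A) \cong T(A')$ compatible with $\varphi$ and the $r$-maps, then each cluster $\mathfrak{s}$ of $A$ of cardinality $\geq 2$ is recovered as the set of $z \in A$ whose image $r_A(z)$ lies in the subtree rooted at the vertex associated to $\mathfrak{s}$ (with $T(A)$ rooted at $r_A(\infty)$), forcing $\varphi(\mathfrak{s})$ to coincide with the corresponding cluster of $A'$; singleton clusters correspond trivially under $\varphi$.

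The main technical hurdle is the vertex-cluster bijection of the first step: \Cref{lemma position and cluster data} furnishes only a sufficient condition for equivalence of triples, so I would need to verify directly that any two pairs $(z_0, z_1), (z_0', z_1')$ drawn from a common cluster $\mathfrak{s}$ and satisfying $v(z_1 - z_0) = v(z_1' - z_0') = d(\mathfrak{s})$ yield equivalent triples $(z_0, z_1, \infty), (z_0', z_1', \infty)$, even when the numerical conditions of that lemma fail. This can be handled by the direct computation that the composition $\bar{\gamma}_{(z_0', z_1', \infty)} \circ \bar{\gamma}_{(z_0, z_1, \infty)}^{-1}$ sends $w$ to $\frac{z_1 - z_0}{z_1' - z_0'}\, w + \frac{z_0 - z_0'}{z_1' - z_0'}$, whose leading coefficient is a unit and whose constant term has nonnegative valuation thanks to $v(z_1 - z_0) = v(z_1' - z_0') = d(\mathfrak{s})$ and $v(z_0 - z_0') \geq d(\mathfrak{s})$; this composition therefore reduces to an invertible element of $\PGL_2(k)$, yielding the desired equivalence.
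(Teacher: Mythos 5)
Your proposal is correct and follows essentially the same route as the paper's proof: both establish the correspondence between vertices of $T(A)$ and clusters of cardinality $\geq 2$ via representatives of the form $(z_0, z_1, \infty)$, describe the fibers of $r_A$ in cluster-theoretic terms (smallest cluster of size $\geq 2$ containing $z$, with $\infty$ going to the vertex of the maximal cluster), and deduce the equivalence. Your explicit matrix computation showing that two pairs drawn from a common cluster at its depth give equivalent triples — a case where the sufficient condition of \Cref{lemma position and cluster data} does not literally apply (e.g.\ when $z_0 = z_0'$ or when $z_0, z_0'$ share a deeper sub-cluster) — supplies a detail the paper passes over by citing that lemma, and is a worthwhile extra degree of care rather than a different method.
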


\begin{proof}

There is a correspondence between the vertices of $|T(A)|$ and the clusters of $A$ of cardinality $\geq 2$, described as follows.  By \Cref{lemma position and cluster data}, a vertex $v \in |T(A)|$ is represented by an ordered triple $(z_0, z_1, z_\infty)$ satisfying $v(z_\infty - z_0) < v(z_1 - z_0)$; the corresponding cluster is the smallest cluster $\mathfrak{s}$ which contains $z_0$ and $z_1$ (note that $z_\infty \notin \mathfrak{s}$).  Conversely, given a cluster $\mathfrak{s}$ of $A$, choosing elements $z_0, z_1 \in \mathfrak{s}$ such that $z_0$ and $z_1$ do not both lie in a proper sub-cluster of $\mathfrak{s}$, and choosing $z_\infty \in A \smallsetminus \mathfrak{s}$, the corresponding vertex is the one represented by the ordered triple $(z_0, z_1, z_\infty)$.  \Cref{lemma position and cluster data} directly implies that this vertex depends neither on the choice of elements $z_0, z_1 \in \mathfrak{s}$ which do not both lie in a proper sub-cluster of $\mathfrak{s}$ nor on the choice of $z_\infty \in A \smallsetminus \mathfrak{s}$.

Let us denote the cluster corresponding to a vertex $v \in |T(A)|$ by $\mathfrak{s}_v$.  Given any ordered triple $\underline{z} = (z_0, z_1, z_\infty)$ satisfying $v(z_\infty - z_0) < v(z_1 - z_0)$ which represents a vertex $v \in |T(A)|$ and given any pair of distinct elements $z, z' \in A$, the map $\bar{\gamma}_{\underline{z}}$ (whose formula is explicitly given in the proof of \Cref{lemma position and cluster data}) sends $z$ and $z'$ to the same point in $\proj_k^1$ if and only if either we have $z, z' \notin \mathfrak{s}_v$ or we have $z, z' \in \mathfrak{c}$ for some proper sub-cluster $\mathfrak{c} \subsetneq \mathfrak{s}_v$.  It follows that for each vertex $v \in |T(A)|$, the inverse image $r_A^{-1}(v)$ coincides with the set of elements $z \in \mathfrak{s}_v$ such that $z$ is not in any proper sub-cluster of $\mathfrak{s}_v$, together with the element $\infty \in A$ if $\mathfrak{s}_v$ is not properly contained in any cluster.  Therefore, the map $r_A$ determines and is determined by the combinatorial cluster data of $A$, and the claim of the proposition follows.
\end{proof}

In light of \Cref{prop position and cluster data} above, and keeping in mind that a given element in a finite subset $A \subset \proj_K^1$ may be moved to $\infty$ by applying an appropriate fractional linear transformation to $A$, \Cref{conj GvanderP} may be restated in the language of cluster data as follows.

\begin{asr} \label{conj GvanderP clusters}

With the above set-up, after possibly applying a suitable fractional linear transformation to the $(2g + 2)$-element subset $S \subset \proj_K^1$ and a suitable fractional linear transformation to its image $\pi(S) = \mathcal{B} \subset \proj_K^1$, the subsets $S, \mathcal{B} \subset \proj_K^1$ have the same combinatorial cluster data.

\end{asr}

\subsection{Counterexample to the conjecture as stated} \label{sec GvanderP counterexample}

Gerritzen and van der Put show in \cite[\S IX.2.5]{gerritzen2006schottky} that their conjecture holds as stated when $g = 1$, and they claim, without showing explicit calculations, that they have confirmed by checking each possible position of a $6$-element subset $S \subset \proj_K^1$ that their conjecture holds also when $g = 2$.  However, in the following remark we show that there are counterexamples \Cref{conj GvanderP clusters} even when $g = 2$.

\begin{rmk} \label{rmk problem with conjecture}

We observe an issue with Gerritzen and van der Put's conjecture (as interpreted literally) in that when $g \geq 2$ it is possible for two good cardinality-$(2g + 2)$ subsets $S, S' \in \proj_K^1$ to induce the same Schottky group $\Gamma \lhd \Gamma_0$ but to not have the same position.  As an example, suppose that $K = \qq_3(\zeta_3)$; let 
\begin{equation*}
S = \{a_0 := -9, b_0 := 9, a_1 := 3, b_1 := 12, a_2 := 1, b_2 := \infty\};
\end{equation*}
 and for $i = 0, 1, 2$, let $s_i \in \PGL_2(K)$ be the unique fractional linear transformation of order $2$ which fixes the points $a_i, b_i \in \proj_K^1$, noting that $s_0$ is simply the function $z \mapsto \frac{81}{z}$.  This set $S$ satisfies the hypotheses of \cite[Theorem 5.7]{kadziela2007rigid}, and so that theorem tells us that $\Gamma := \langle s_0 s_1, s_0 s_2 \rangle$ is a Schottky group (this can also be deduced using \cite[Corollary 3.23]{yelton2024branch}).  Now let 
\begin{equation*}
S' = \{a_0' := a_0 = -9, b_0' := b_0 = 9, a_1' := s_0(a_1) = 27, b_1' := s_0(b_1) = 27/4, a_2' := a_2 = 1, b_2' := b_2 = \infty\}.
\end{equation*}
Let $\phi: S \to S'$ be the bijection given by $(a_i, b_i) \mapsto (a_i', b_i')$ for $1 \leq i \leq 3$, noting that we have $\pi \circ \phi = \pi$ on $S$.  Letting $s_i' = s_i$ for $i = 0, 2$ and $s_1' = s_0 s_1 s_0$, we see that each $s_i'$ is the unique fractional linear transformation of order $2$ which fixes the points $a_i', b_i' \in \proj_K^1$ and that, in constructing a Schottky group from $S'$ in the usual way, we get 
\begin{equation}
\Gamma' := \langle s_0' s_1', s_0' s_2' \rangle = \langle (s_0 s_1)^{-1}, s_0 s_2 \rangle = \Gamma.
\end{equation}

However, \Cref{conj GvanderP} cannot hold for $S$ because there is no fractional linear transformation $\sigma \in \PGL_2(K)$ such that $\sigma(S')$ has the same combinatorial cluster data as $S$ (or more precisely, such that the bijection $\sigma \circ \phi : S \to \sigma(S')$ acts as a bijection between clusters of $S$ and clusters of $S'$).  To see this, we note that the even-cardinality clusters of $S$ are $\{-9, 9, 3, 12\}$, $\{-9, 9\}$, and $\{3, 12\}$, while the even-cardinality clusters of $S'$ are only $\{-9, 9, 27, 27/4\}$ and $\{27, 27/4\}$.  \Cref{rmk reciprocal} then shows that there is no fractional linear transformation $\sigma \in \PGL_2(K)$ such that $\sigma(S')$ has the same combinatorial cluster data as $S$ (or more precisely, such that the composition $\sigma \circ \phi : S \to \sigma(S')$ acts as a bijection between clusters of $S$ and clusters of $\sigma(S')$).  Alternately, in terms of position, one sees that the positions of $S$ and $S'$ fall under the cases (a) and (b) respectively in \cite[\S IX.2.5.3]{gerritzen2006schottky}.

\end{rmk}

\section{Convex hulls in the Berkovich projective line} \label{sec berk}

Given the completion of an algebraic closure $\cc_K$ of $K$, we write $v : \cc_K \to \rr$ for an extension of the valuation $v : K^\times \to \zz$.  Below when we speak of a \emph{disc} $D \subset \cc_K$, we mean that $D$ is a closed disc with respect to the metric induced by $v : \cc_K \to \rr$; in other words, $D = \{z \in \cc_K \ | \ v(z - c) \geq r\}$ for some center $c \in \cc_K$ and real number $r \in \rr$, which is the \emph{(logarithmic) radius} of $D$.  Given a disc $D \subset \cc_K$, we denote its logarithmic radius by $d(D)$.

\subsection{The Berkovich projective line and related notation} \label{sec berk berk}

The \emph{Berkovich projective line} $\Berk$ over an algebraic closure of a discrete valuation field $K$ is a type of rigid analytification of the projective line $\proj_{\cc_K}^1$ and is typically defined in terms of multiplicative seminorms on $\cc_K[x]$ as in \cite[\S1]{baker2008introduction} and \cite[\S6.1]{benedetto2019dynamics}.  Points of $\Berk$ are identified with multiplicative seminorms which are each classified as Type I, II, III, or IV.  For the purposes of this paper, as in \cite{yelton2024branch}, we may safely ignore points of Type IV and need only adopt a fairly rudimentary construction which does not directly involve seminorms.

\begin{dfn} \label{dfn berk}

Define the \textit{Berkovich projective line}, denoted $\Berk$, to be the topological space with points and topology given as follows.  The points of $\Berk$ are identified with  
\begin{enumerate}[(i)]
\item $\cc_K$-points $z \in \proj_{\cc_K}^1$, which we will call \emph{points of Type I}; and 
\item discs $D \subset \cc_K$; if $d(D) \in \qq$ (resp. $d(D) \notin \qq$), we call this a \emph{point of Type II} (resp. a \emph{point of Type III}).
\end{enumerate}

A point of $\Berk$ which is identified with a point $z \in \proj_{\cc_K}^1$ (resp. a disc $D \subset \cc_K$) is denoted $\eta_z \in \Berk$ (resp. $\eta_D \in \Berk$).

We define an infinite metric on $\Berk$ given by the distance function 
\begin{equation*}
\delta : \Berk \times \Berk \to \rr \cup \{\infty\}
\end{equation*}
defined as follows.  We set $\delta(\eta_z, \eta') = \infty$ for any point $\eta_z$ of Type I and any point $\eta' \neq \eta_z \in \Berk$.  Given a containment $D \subseteq D' \subset \cc_K$ of discs, we set $\delta(\eta_D, \eta_{D'}) = d(D) - d(D') \in \rr$.  More generally, if $D, D' \subset \cc_K$ are discs and $D'' \subset \cc_K$ is the smallest disc containing both $D$ and $D'$, we set 
\begin{equation}
\delta(\eta_D, \eta_{D'}) = \delta(\eta_D, \eta_{D''}) + \delta(\eta_{D'}, \eta_{D''}) = d(D) + d(D') - 2d(D'').
\end{equation}

We endow the subspace of $\Berk$ consisting of points of Type II and III with the topology induced by the metric given by $\delta$, and we extend this to a topology on all of $\Berk$ in such a way that, given any element $z \in \cc_K$ and disc $D \subset \cc_K$ containing $z$, the map $\lambda: [0, e^{d(D)}] \to \Berk$ given by sending $0$ to $\eta_z$ and sending $s \in (0, e^{d(D)}]$ to the disc of radius $-\ln(s)$ containing $z$ provides a path from $\eta_z$ to $\eta_D$ and that there is a similarly defined path from $\eta_D$ to $\eta_\infty$ -- see \cite[Definition 2.1, Remark 2.2]{yelton2024branch} for details.

\end{dfn}

As is discussed in \cite[Remark 2.2]{yelton2024branch}, the space $\Berk$ is path-connected, and there is a unique non-backtracking path between any pair of points in $\Berk$.  This allow us to set the following notation.  Below we denote the image in $\Berk$ of the non-backtracking path between two points $\eta, \eta' \in \Berk$ by $[\eta, \eta'] \subset \Berk$, and we will often refer to this image itself as ``the path'' from $\eta$ to $\eta'$; note that with this notation we have $[\eta, \eta'] = [\eta', \eta]$.  The above observations imply that, given a point $\eta \in \Berk$ and a subspace $\Lambda \in \Berk$, there is a (unique) point $\xi \in \Lambda$ such that every path from $\eta$ to a point in $\Lambda$ contains $\xi$; we will often speak of ``the closest point in $\Lambda$ to $\eta$'' in referring to this point $\xi$ (note that we will use this language even in the case that the distance from $\eta$ to any point in $\Lambda$ is infinite, as in when $\eta \notin \Lambda$ is of Type I).  In a similar way, if $\Lambda, \Lambda' \in \Berk$ are subspaces, we will speak of ``the closest point in $\Lambda$ to $\Lambda'$'' (and vice versa).  Given a point $\eta \in \Berk$ and subspaces $\Lambda, \Lambda' \in \Berk$, we write $\delta(\eta, \Lambda)$ (resp. $\delta(\Lambda, \Lambda')$) for the distance between $\eta$ and the closest point in $\Lambda'$ to $\eta$ (resp. between the closest point in $\Lambda$ to $\Lambda'$ and the closest point in $\Lambda'$ to $\Lambda$).

Given points $\eta, \eta' \in \Berk$, let $\eta \vee \eta' \in [\eta, \eta'] \subset \Berk$ be the point of Type II or III corresponding to the largest disc among discs corresponding to points in the path $[\eta, \eta']$.

We refer to the path between distinct points $\eta_a, \eta_b \in \Berk$ of Type I as an \emph{axis} and denote the axis connecting them by $\Lambda_{a, b} := [\eta_a, \eta_b] \subset \Berk$.  We note that the space $\Lambda_{a, b} \smallsetminus \{\eta_a, \eta_b\}$ consists precisely of those points whose corresponding disc $D$ either satisfies $\#(D \cap \{a, b\}) = 1$ or is the smallest disc containing $\{a, b\}$, a fact that we will freely use in arguments below.  In our frequent context of dealing with a $(2g + 2)$-set consisting of elements labeled $a_0, b_0, \dots, a_g, b_g$, we write $\Lambda_{(i)} = \Lambda_{a_i, b_i}$ for $0 \leq i \leq g$.

As in \cite{yelton2024branch}, given a subspace $\Lambda \subset \Berk$ and a real number $r > 0$, we define the \emph{(closed) tubular neighborhood} of $\Lambda$ of radius $r$ to be 
\begin{equation} \label{eq tubular neighborhood}
B(\Lambda, r) = \{\eta \in \Berk \ | \ \delta(\eta, \Lambda) \leq r\},
\end{equation}
 and we write $\hat{\Lambda}_{(i)} = B(\Lambda_{(i)}, \frac{v(p)}{p - 1})$ for $0 \leq i \leq g$ in the aforementioned context where the axes $\Lambda_{(i)}$ are defined.

Finally, the following elementary facts relating valuations to distances from the axis $\Lambda_{0, \infty}$, which are in \cite[Lemma 4.5]{yelton2024branch}, will be used many times in arguments below, and so for convenience we present them here in full.

\begin{prop} \label{prop valuations}

In addition to the above notation, for any $r \in \rr$, write $D(r) \subset \cc_K$ for the disc $\{z \in \cc_K \ | \ v(z) \geq r\}$.

\begin{enumerate}[(a)]

\item Given any point $a \in \proj_{\cc_K}^1 \smallsetminus \{0, \infty\}$, the closest point in the axis $\Lambda_{0, \infty}$ to $\eta_a$ is $\eta_{D(v(a))}$.

\item Given any distinct points $\eta, \eta' \in \Berk$, the point $\eta \vee \eta' \in [\eta, \eta']$ has minimal distance to the axis $\Lambda_{0, \infty}$ among points in the path $[\eta, \eta']$.

\item Given any distinct points $a, b \in \proj_{\cc_K}^1 \smallsetminus \{0, \infty\}$, writing $r = \min\{v(a), v(b)\}$, we have 
\begin{equation} \label{eq valuations}
v(a - b) = r + \delta(\eta_a \vee \eta_b, D(r)) = r + \delta(\Lambda_{a, b}, \Lambda_{0, \infty}).
\end{equation}

\end{enumerate}

\end{prop}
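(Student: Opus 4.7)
The plan is to deduce each statement directly from \Cref{dfn berk} together with the explicit description of the axis $\Lambda_{0,\infty}$ as the union of $\{\eta_0,\eta_\infty\}$ with the set of points $\eta_{D(r)}$ for $r \in \rr$; since discs are subsets of $\cc_K$, the only ones whose associated Berkovich point lies on this axis away from the endpoints are the discs containing $0$. For (a), the non-backtracking path from $\eta_a$ toward $\eta_\infty$ passes through $\eta_{D'}$ as $D'$ ranges over the discs containing $a$ ordered by reverse containment, and the smallest such $D'$ which also contains $0$ is precisely $D(v(a))$. Any non-backtracking path from $\eta_a$ to a point of $\Lambda_{0,\infty}$ must pass through $\eta_{D(v(a))}$, identifying it as the retraction of $\eta_a$ onto the axis.

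For (b), I would use the fact that the logarithmic radius $d(D)$ varies along $[\eta,\eta']$ in a unimodal fashion, strictly decreasing from each endpoint toward $\eta \vee \eta'$ and then strictly increasing. I would then split into two cases according to whether the disc $D_0$ corresponding to $\eta \vee \eta'$ contains $0$. If it does, then $\eta \vee \eta' \in \Lambda_{0,\infty}$ and the distance to the axis vanishes there, which certainly minimizes $\delta(\cdot,\Lambda_{0,\infty})$ along the path. If $D_0$ does not contain $0$, then no disc on the path does either (each being contained in $D_0$), and by the ultrametric property every element of every such disc has the common valuation $v_0$; applying the formula of (a) pointwise then gives $\delta(\eta_D,\Lambda_{0,\infty}) = d(D) - v_0$ throughout $[\eta,\eta']$, which is minimized precisely where $d(D)$ is, namely at $\eta \vee \eta'$.

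Statement (c) then follows by combining (a) and (b). Applying (b) to the path $\Lambda_{a,b} = [\eta_a,\eta_b]$ identifies $\eta_a \vee \eta_b$ as the closest point on $\Lambda_{a,b}$ to $\Lambda_{0,\infty}$, and the corresponding disc is the smallest one containing both $a$ and $b$, of logarithmic radius $v(a-b)$. The distance computation from (b) then yields $\delta(\eta_a \vee \eta_b, \Lambda_{0,\infty}) = v(a-b) - r$, which collapses to $0$ exactly when $v(a) \neq v(b)$ (in which case $v(a-b) = r$ and $D_0$ already contains $0$). The only real obstacle is the case analysis in (b) and correctly tracking when the junction disc contains $0$: none of the individual cases is difficult, but an incautious application of the ultrametric inequality is the one place where the bookkeeping could go astray.
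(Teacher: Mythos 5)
Your argument is correct. Note that the paper itself gives no proof of this proposition: it is imported verbatim from Lemma 4.5 of the author's previous paper \cite{yelton2024branch}, so there is no internal proof to compare against; your direct verification from \Cref{dfn berk} (the axis $\Lambda_{0,\infty}$ consists of the points $\eta_{D(r)}$, the retraction of $\eta_a$ onto it is the first disc containing both $a$ and $0$, and $d(D)$ is unimodal along any path with minimum at $\eta\vee\eta'$) is exactly the kind of elementary computation the cited lemma encapsulates, and each of your three steps goes through: in case (b) the key observation that all discs along the path lie inside $D_0$, hence (when $0\notin D_0$) their elements share a common valuation $v_0$ and $\delta(\eta_D,\Lambda_{0,\infty})=d(D)-v_0$, is the right one, and (c) follows by applying this to $D_0=D_{a,b}$ with $d(D_{a,b})=v(a-b)$ and $v_0=r$.

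One small inaccuracy, which is not load-bearing: your parenthetical that $\delta(\eta_a\vee\eta_b,\Lambda_{0,\infty})$ ``collapses to $0$ exactly when $v(a)\neq v(b)$'' is wrong as a biconditional. The distance is $0$ precisely when $0\in D_{a,b}$, i.e.\ when $v(a-b)=r$, which also happens with $v(a)=v(b)$ whenever $a$ and $b$ have distinct images in the residue field after scaling (e.g.\ $a=1$, $b=2$ with residue characteristic $3$). Since the identity $v(a-b)=r+\delta(\eta_a\vee\eta_b,\eta_{D(r)})=r+\delta(\Lambda_{a,b},\Lambda_{0,\infty})$ holds uniformly in both cases ($0\in D_{a,b}$ forces $v(a-b)=r$, and $0\notin D_{a,b}$ gives distance $v(a-b)-r$ to the closest axis point $\eta_{D(r)}$), you should simply drop the ``exactly when'' remark or restate it as the condition $v(a-b)=r$.
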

 
\subsection{Convex hulls of finite sets} \label{sec berk convex hulls}

By studying a finite subset $A \subset \proj_K^1$ through its \emph{convex hull} in $\Berk$, we adopt a more topological point of view which enables us to assert stronger results and to better demonstrate them.  For our purposes, we will only care about the convex hull of a set $A$ which is \emph{clustered in pairs}, so we begin with that definition.

\begin{dfn} \label{dfn clustered in pairs}

Let $A \subset \proj_K^1$ be a non-empty even-cardinality subset.  We say that $A$ is \emph{clustered in $r$-separated pairs} for some $r \in \rr_{\geq 0}$ if there is a labeling $a_0, b_0, \dots, a_g, b_g$ of the elements of $A$ such that we have $B(\Lambda_{(i)}, r) \cap B(\Lambda_{(j)}, r) = \varnothing$ for $i \neq j$.

If $A$ is clustered in $0$-separated pairs, we say more simply that $A$ is \emph{clustered in pairs}.

In the context of using this terminology, the \emph{pairs} that $A$ is clustered in are the $2$-element sets $\{a_i, b_i\}$.

\end{dfn}

The following crucial fact comes from \cite[Remark 2.15]{yelton2024branch}.

\begin{prop} \label{prop clustered in pairs}

Every $p$-superelliptic set $S$ is clustered in $\frac{v(p)}{p - 1}$-separated pairs $\{a_0, b_0\}, \dots, \{a_g, b_g\}$; in other words, we have $\hat{\Lambda}_{(i)} \cap \hat{\Lambda}_{(j)} = \varnothing$ for $i \neq j$.  Moreover, the partition $S = \bigsqcup_{i = 0}^g \{a_i, b_i\}$ into pairs is the only one with respect to which $S$ is clustered in pairs.

\end{prop}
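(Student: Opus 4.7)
The proposition is recalled from the author's prior work \cite[Remark 2.15]{yelton2024branch}, so my plan is not to reprove from scratch but to sketch the key structural ideas behind it. The starting point is the hypothesis that $S$ is $p$-superelliptic, which by \Cref{sec intro previous results} means that the associated normal subgroup $\Gamma \lhd \Gamma_0$ is Schottky (and hence free, discontinuous, and purely loxodromic on $\Berk$) and that $\Gamma_0$ cannot be generated by fewer than $g + 1$ elements. The plan is to exploit these dynamical properties of $\Gamma$ to read off geometric constraints on the axes $\Lambda_{(i)}$ of the generators $s_i$ of $\Gamma_0$.

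For the first assertion, I would argue by a ping-pong-style analysis on $\Berk$. Each $s_i$, being an order-$p$ element of $\PGL_2(K)$ fixing $a_i, b_i$, acts on $\Berk$ fixing the axis $\Lambda_{(i)}$ pointwise; however, on Type II/III points \emph{off} of $\Lambda_{(i)}$, the action of $s_i$ does not agree with the identity and in fact ``folds'' a neighborhood of $\Lambda_{(i)}$ via its order-$p$ rotation, with the locus of nontrivial folding of radius precisely $\frac{v(p)}{p-1}$ (this constant being exactly the break in the higher ramification filtration for a tamely- or wildly-ramified $\zz/p$-action in residue characteristic $p$, and reducing to $0$ otherwise). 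The next step would be to compute the translation length of an element like $(s_0^{j-1} s_i s_0^j)(s_0^{k-1} s_l s_0^k)$ of $\Gamma$ in terms of distances between the relevant axes; as in Culler-Morgan tree-action theory, this translation length comes out to $2\delta(\hat{\Lambda}_{(i)}, \hat{\Lambda}_{(l)})$ plus contributions from the nested axes. If some $\hat{\Lambda}_{(i)} \cap \hat{\Lambda}_{(j)}$ were nonempty, one would produce a word in the $s_i$ acting either with a fixed point on $\Berk$ or with an unbounded orbit accumulating in the wrong place, contradicting either the freeness or the discontinuity of $\Gamma$. The hardest (and most delicate) step here is to show that the constant $\frac{v(p)}{p-1}$ is the sharp threshold; this requires a careful explicit computation on the standard model $s_i : z \mapsto \zeta_p z$ (after conjugating $a_i, b_i$ to $0, \infty$) of how much a point of $\Berk$ at distance $r$ from $\Lambda_{(i)}$ is moved by a prime-to-$p$ power of $s_i$, and checking that the ``effective radius of influence'' is exactly $\frac{v(p)}{p-1}$.

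For the uniqueness of the partition, my plan is to argue directly from the convex hull $\Sigma_S$ inside $\Berk$. If $S$ is clustered in pairs with respect to a partition $\{a_i, b_i\}_{i=0}^g$, then the axes $\Lambda_{(i)} = [\eta_{a_i}, \eta_{b_i}]$ are pairwise disjoint, and the points $\eta_{a_i}, \eta_{b_i}$ are exactly the $2g + 2$ ``leaves'' (Type I endpoints) of $\Sigma_S$. The partition is then recovered as the unique pairing of leaves induced by grouping together the two leaves $\eta_{a_i}, \eta_{b_i}$ which share the same nearest Type II ``pendant vertex'' of $\Sigma_S$ (i.e. the closest internal vertex from which two leaves emanate and no other branch departs before reaching it). Because this pendant-vertex structure of $\Sigma_S$ is intrinsic to $S$, no other partition of $S$ can exhibit the clustered-in-pairs property.

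Overall, the main obstacle is the sharpness of the constant $\frac{v(p)}{p-1}$, which ties the combinatorial separation of the axes to the arithmetic ramification of the order-$p$ automorphisms $s_i$; once that sharp control is in hand, the separation claim reduces to a standard ping-pong verification and the uniqueness claim reduces to an inspection of the leaf structure of $\Sigma_S$.
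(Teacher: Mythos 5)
The paper itself does not reprove this proposition: it is imported directly from \cite[Remark 2.15]{yelton2024branch}, so your decision to defer to that citation and only sketch the mechanism is reasonable, and your picture of the separation claim is pointed in the right direction (after normalizing $a_i = 0$, $b_i = \infty$, the element $s_i$ fixes pointwise the entire closed tubular neighborhood $\hat{\Lambda}_{(i)} = B(\Lambda_{(i)}, \tfrac{v(p)}{p-1})$, not just the axis, precisely because $v(\zeta_p^n - 1) = \tfrac{v(p)}{p-1}$ for $p \nmid n$, and it moves every point strictly outside it). But, as you yourself concede, the quantitative heart of the first assertion -- that $\hat{\Lambda}_{(i)} \cap \hat{\Lambda}_{(j)} \neq \varnothing$ forces some nontrivial word in $\Gamma$ to be non-loxodromic (so $\Gamma$ is not Schottky) or forces $\Gamma_0$ to be generated by fewer than $g+1$ elements -- is only asserted, not carried out, so this half remains a plan rather than an argument; that is acceptable here only because the statement is being cited rather than proved in this paper.

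The uniqueness part of your proposal, however, contains a genuine error. Your rule of ``grouping together the two leaves which share the same nearest pendant vertex'' does not recover the partition when the axis of a pair carries more than one branch vertex. For instance, take $S = \{0, \infty, 1, 1+\pi, c, c+\pi\}$ with $v(\pi)$ large and $v(c) < 0$: this set is clustered in the pairs $\{0,\infty\}$, $\{1, 1+\pi\}$, $\{c, c+\pi\}$, but the path from the pair $\{1,1+\pi\}$ attaches to $\Lambda_{0,\infty}$ at the point corresponding to the disc $\{v(z) \geq 0\}$ while the path from $\{c, c+\pi\}$ attaches at the disc $\{v(z) \geq v(c)\}$; hence the first vertices of $\Sigma_S$ encountered from $\eta_0$ and from $\eta_\infty$ are distinct, and neither is a vertex from which two leaves emanate, so your rule never pairs $0$ with $\infty$. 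The intrinsic characterization that actually works -- and which this paper proves as \Cref{prop vertex cluster correspondence}(a) -- is that the pairs are exactly the equivalence classes of the relation ``lying in precisely the same even-cardinality clusters of $S$''; since that relation makes no reference to a chosen partition, uniqueness of the partition follows at once, with no hypothesis beyond being clustered in pairs. Your leaf-pairing heuristic could perhaps be repaired by an iterative pruning argument, but as stated it is false, and it is not the route the paper (or its predecessor) takes.
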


In light of the above proposition, in the context of this paper, all subsets $S \subset \proj_K^1$ consisting of fixed points of generators of $p$-Whittaker groups satisfy this property of being clustered in $\frac{v(p)}{p - 1}$-separated pairs (which, in the case that $p$ is not the residue characteristic of $K$, simply means being clustered in pairs).

\begin{dfn} \label{dfn convex hull}

Let $A \subset \proj_K^1$ be a subset which is clustered in the pairs $\{a_0, b_0\}, \dots, \{a_g, b_g\}$ for some $g \geq 0$.  We denote by $\Sigma_A \subset \Berk$ the \emph{convex hull} of $A$, \textit{i.e.} the smallest connected subspace of $\Berk$ containing the set of points of Type I corresponding to $A$.

A \emph{distinguished vertex} of $\Sigma_A$ is a point $v \in \Lambda_{a_i, b_i} \subset \Sigma_A$ for some index $i \in \{0, \dots, g\}$ satisfying that no neighborhood of $v$ in the metric space $\Sigma_A$ is contained in the axis $\Lambda_{(i)}$.

A \emph{natural vertex} of $\Sigma_A$ is a point $v \in \Sigma_A$ whose open neighborhoods contain a star shape centered at $v$ (with $\geq 3$ edges coming out of $v$).

A \emph{vertex} of $\Sigma_A$ is a distinguished or a natural vertex.

\end{dfn}

\begin{rmk} \label{rmk vertices}

The idea behind the above terminology is that the closure of the subspace $\Sigma_A \smallsetminus \{\Lambda_{(i)}\}_{0 \leq i \leq g} \subset \Sigma_A$ is a finite metric graph whose vertices are precisely the vertices of $\Sigma_A$ as described in \Cref{dfn convex hull}.  See \cite[\S3.1]{yelton2024branch} for a proof and more details.

\end{rmk}

\begin{prop} \label{prop clustered in pairs distinguished vertices}

Let $A \subset \proj_K^1$ be a subset which is clustered in pairs.  The set $A$ is clustered in $r$-separated pairs for some $r \in \rr_{> 0}$ if and only if for each pair of distinguished vertices $v, v' \in \Sigma_A$ which do not lie in the same axis $\Lambda_{(i)}$ for any index $i$, we have $\delta(v, v') > 2r$.

\end{prop}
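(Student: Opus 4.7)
The plan is to reformulate both sides of the claimed equivalence in terms of pairwise distances between the axes $\Lambda_{(i)}$. Since $A$ is already clustered in pairs, the axes $\Lambda_{(i)}$ are pairwise disjoint in $\Berk$, and the definition of the tubular neighborhood $B(\Lambda_{(i)}, r)$ combined with the tree structure of $\Berk$ makes it straightforward to check that $B(\Lambda_{(i)}, r) \cap B(\Lambda_{(j)}, r) = \varnothing$ is equivalent to $\delta(\Lambda_{(i)}, \Lambda_{(j)}) > 2r$. So the proposition reduces to showing that $\delta(\Lambda_{(i)}, \Lambda_{(j)}) > 2r$ for all $i \neq j$ if and only if every pair of distinguished vertices lying on distinct axes is at distance $> 2r$.

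For the forward direction, I would fix distinguished vertices $v \in \Lambda_{(i)}$ and $v' \in \Lambda_{(j)}$ with $i \neq j$, and let $\xi_i \in \Lambda_{(i)}$, $\xi_j \in \Lambda_{(j)}$ be the (unique) pair of points realizing $\delta(\Lambda_{(i)}, \Lambda_{(j)})$. The non-backtracking path from $v$ to $v'$ in the tree $\Berk$ traverses $\Lambda_{(i)}$ from $v$ to $\xi_i$, then the segment $[\xi_i, \xi_j]$, then $\Lambda_{(j)}$ from $\xi_j$ to $v'$, yielding
\begin{equation*}
\delta(v, v') = \delta(v, \xi_i) + \delta(\xi_i, \xi_j) + \delta(\xi_j, v') \geq \delta(\Lambda_{(i)}, \Lambda_{(j)}) > 2r.
\end{equation*}

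For the reverse direction, the strategy is to show that the minimizing points $\xi_i$ and $\xi_j$ are themselves distinguished vertices of $\Sigma_A$; the hypothesis applied to the pair $(\xi_i, \xi_j)$ then yields $\delta(\Lambda_{(i)}, \Lambda_{(j)}) = \delta(\xi_i, \xi_j) > 2r$, as desired. To see that $\xi_i$ is a distinguished vertex, I would first use that $\Sigma_A$ is a connected subspace of the tree $\Berk$ containing both axes, so it contains the entire segment $[\xi_i, \xi_j]$. Next, I would rule out the possibility that $\xi_i$ coincides with an endpoint of $\Lambda_{(i)}$: the endpoints $\eta_{a_i}$ and $\eta_{b_i}$ are Type I points, which by \Cref{dfn berk} are at infinite distance from every other point of $\Berk$, whereas $\xi_i$ is at finite distance from $\xi_j$. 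Therefore $[\xi_i, \xi_j]$ furnishes a nontrivial direction in $\Sigma_A$ emanating from $\xi_i$ that leaves $\Lambda_{(i)}$, so no neighborhood of $\xi_i$ in $\Sigma_A$ can be contained in $\Lambda_{(i)}$; the argument for $\xi_j$ is symmetric. The main obstacle will be this verification that the axis-to-axis minimum-distance points are distinguished vertices, since everything else is a formality about geodesics in a real tree.
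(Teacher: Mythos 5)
Your proposal is correct and follows essentially the same route as the paper's proof: reduce $r$-separation to $\delta(\Lambda_{(i)},\Lambda_{(j)}) > 2r$, get the forward direction from the geodesic through the closest points, and get the converse by observing that the closest point of each axis to the other is a distinguished vertex. You simply spell out the details (the segment $[\xi_i,\xi_j]$ lying in $\Sigma_A$ and leaving $\Lambda_{(i)}$ at $\xi_i$) that the paper dismisses as immediate from the definition.
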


\begin{proof}

We begin by observing that being clustered in $r$-separated pairs is clearly equivalent to the condition that $\delta(\Lambda_{(i)}, \Lambda_{(j)}) > 2r$ for any indices $i \neq j$; we freely use this fact below.

Let $v, v' \in \Sigma_A$ be distinguished vertices not lying in the same axis $\Lambda_{(i)}$; as distinguished vertices by definition each lie in an axis of this type, there are indices $i \neq j$ such that $v \in \Lambda_{(i)}$ and $v' \in \Lambda_{(j)}$.  The forward direction of the assertion now directly follows.  The other direction follows from the fact that, immediately from the definition of distinguished vertex, given any indices $i \neq j$, the closest point in $\Lambda_{(i)}$ (resp. $\Lambda_{(j)}$) to $\Lambda_{(j)}$ (resp. $\Lambda_{(i)}$) is a distinguished vertex $v$ (resp. $v'$), so $\delta(v, v') > 2r$ implies $\delta(\Lambda_{(i)}, \Lambda_{(j)}) > 2r$.
\end{proof}

\begin{prop} \label{prop vertex cluster correspondence}

Let $A \subset \proj_K^1$ be a subset which is clustered in the pairs $\{a_0, b_0\}, \dots, \{a_g, b_g\}$ for some $g \geq 0$.

\begin{enumerate}[(a)]

\item Define an equivalence relation $\sim$ on $A$ as follows: given two points $z, w \in A$, we write $z \sim w$ if $z$ and $w$ lie in the exact same even-cardinality clusters of $A$, \textit{i.e.} if for every even-cardinality cluster $\mathfrak{s} \subseteq A$ we have either $z, w \in \mathfrak{s}$ or $z, w \notin \mathfrak{s}$.  Then the equivalence classes under the relation $\sim$ are precisely the pairs $\{z_i, w_i\}$.

\item There is a one-to-one correspondence between the clusters $\mathfrak{s}$ of $A$ satisfying $2 \leq \#\mathfrak{s} \leq 2g + 1$ and the vertices of $\Sigma_A$ given by sending a cluster $\mathfrak{s}$ to the point $\eta_{D_{\mathfrak{s}}}$, where $D_{\mathfrak{s}} \subset \cc_K$ is the smallest disc containing $\mathfrak{s}$.  Under this correspondence, a cluster which is not (resp. is) itself the union of $\geq 2$ even-cardinality sub-clusters gets sent to a distinguished vertex (resp. a non-distinguished vertex).

\end{enumerate}

\end{prop}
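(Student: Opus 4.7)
The plan is to extract a single structural lemma from the clustered-in-pairs hypothesis and feed it into both parts. Throughout, let $D_{(i)} \subset \cc_K$ denote the smallest disc containing the pair $\{a_i, b_i\}$, so that $\eta_{D_{(i)}}$ is the peak of the axis $\Lambda_{(i)}$.

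The first step is to prove the following Key Lemma: for any disc $D \subset \cc_K$, at most one index $k$ can satisfy $\#(D \cap \{a_k, b_k\}) = 1$. If two distinct pairs $k_1, k_2$ both met $D$ in exactly one point, then both axes $\Lambda_{(k_1)}$ and $\Lambda_{(k_2)}$ would be forced to pass through the unique point of $\Berk$ where those two pairs exit $D$, contradicting the disjointness of axes guaranteed by \Cref{prop clustered in pairs}. An immediate consequence is that $\#(A \cap D)$ has the parity of the number of pairs partially meeting $D$ (zero or one); in particular, every even-cardinality cluster of $A$ is a union of complete pairs, while every odd-cardinality cluster is such a union together with one extra half-pair element.

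For part (a), the implication $a_i \sim b_i$ is now immediate, since any even cluster containing $a_i$ must by the lemma also contain $b_i$. For the reverse direction, when $i \neq j$ the discs $D_{(i)}$ and $D_{(j)}$ are distinct (else their common peak would lie on both axes) and so are either nested or disjoint; in either situation, taking the smaller one (or either, in the disjoint case) yields an even cluster $A \cap D_{(k)}$ containing exactly one of the two pairs, which separates $a_i$ from $a_j$.

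For part (b), the map $\mathfrak{s} \mapsto \eta_{D_\mathfrak{s}}$ is well-defined (the point lies on every path between two distinct members of $\mathfrak{s}$ and hence in $\Sigma_A$) and injective (since $\mathfrak{s} = A \cap D_\mathfrak{s}$). Counting tangent directions at $\eta_{D_\mathfrak{s}}$ in $\Sigma_A$ gives one ``up'' direction toward the nonempty set $A \setminus \mathfrak{s}$ plus one direction per child of $\mathfrak{s}$ in the cluster tree (at least two such children), so $\eta_{D_\mathfrak{s}}$ is always a natural vertex. To handle the distinguished-versus-non-distinguished dichotomy I split into three cases. When $\mathfrak{s}$ is odd, the lemma places $\eta_{D_\mathfrak{s}}$ on the axis $\Lambda_{(i)}$ of the unique half-met pair $i$, and the child of $\mathfrak{s}$ containing the half-pair element is itself odd (lemma again), blocking any even decomposition. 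When $\mathfrak{s}$ is even and $D_\mathfrak{s} = D_{(i)}$, the point $\eta_{D_\mathfrak{s}}$ is the peak of $\Lambda_{(i)}$ (distinguished); the two immediate sub-discs of $D_{(i)}$ containing $a_i$ and $b_i$ respectively each meet pair $i$ in one point, so by the lemma the corresponding two children of $\mathfrak{s}$ are odd, again blocking an even decomposition. When $\mathfrak{s}$ is even and $D_\mathfrak{s}$ is not any $D_{(i)}$, then for every pair $\{a_k, b_k\} \subset \mathfrak{s}$ the disc $D_{(k)}$ is strictly contained in some single immediate sub-disc of $D_\mathfrak{s}$; hence $a_k$ and $b_k$ lie in the same child of $\mathfrak{s}$, every child is a union of complete pairs (hence even), $\mathfrak{s}$ splits into $\geq 2$ even sub-clusters, and $\eta_{D_\mathfrak{s}}$ lies on no axis. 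Surjectivity follows because any vertex of $\Sigma_A$ must be a Type~II point $\eta_D$, and if $D \neq D_{A \cap D}$ then $\eta_D$ would lie in the interior of a straight segment lying off every axis, contradicting vertex-hood.

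The main obstacle is the middle case above, where $\mathfrak{s}$ is an even cluster with $D_\mathfrak{s} = D_{(i)}$ but properly contains additional pairs nested inside $D_{(i)}$: here the pair $\{a_i, b_i\}$ need not itself be a cluster of $A$, so the ``obvious'' decomposition of $\mathfrak{s}$ into pairs does not immediately supply the required even sub-clusters, and one must invoke the Key Lemma to produce the two odd children of $\mathfrak{s}$ straddling the axis $\Lambda_{(i)}$ that ultimately rule out any even decomposition.
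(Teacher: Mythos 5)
Your proof is correct, and it runs on the same engine as the paper's --- a disc whose trace on $A$ splits two different pairs would place a single point of $\Berk$ on two disjoint axes --- but you package that engine differently. The paper redeploys this observation ad hoc at each step (an even cluster splitting a pair; an odd minimal cluster $\mathfrak{s}_i$ containing a pair; etc.), whereas you isolate it once as a Key Lemma with its parity consequence (even clusters are unions of complete pairs, odd clusters have exactly one half-met pair), after which part (a) is pure bookkeeping. In part (b) the paper organizes the cases by whether $\mathfrak{s}$ decomposes into $\geq 2$ even-cardinality sub-clusters, picking an element lying in no proper even sub-cluster and using part (a) to place $\eta_{D_{\mathfrak{s}}}$ on an axis; you instead case on whether the trace is odd, whether $D_{\mathfrak{s}}$ equals some pair-disc $D_{(i)}$, or neither, and so you must also supply the converse bookkeeping that an odd child rules out an even decomposition (this uses the easy fact, worth stating, that any disjoint union of even sub-clusters refines the partition of $\mathfrak{s}$ into its children, so all children would be even) and that in the third case all children are even. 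Your observation that every cluster point carries at least three tangent directions in $\Sigma_A$ (one upward, one per child) is a clean unifying device: it gives naturality outright and, since at most two directions at a point of an axis can run along that axis, it also gives distinguishedness the moment you know the point lies on $\Lambda_{(i)}$ --- you should say this last sentence explicitly, since being on the axis alone is not the definition of a distinguished vertex. You also go beyond the paper's written proof by addressing injectivity and surjectivity of the correspondence, which the paper leaves implicit.

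Two small repairs. First, the parenthetical ``the point lies on every path between two distinct members of $\mathfrak{s}$'' is false (take two members of a common proper sub-cluster); what is true and suffices is that $\eta_{D_{\mathfrak{s}}} = \eta_z \vee \eta_{z'}$ for some $z, z' \in \mathfrak{s}$ realizing the depth, hence lies on $[\eta_z, \eta_{z'}] \subset \Sigma_A$. Second, in the surjectivity step the phrase ``a straight segment lying off every axis'' is only correct when $A \cap D$ is even; when $A \cap D$ is odd, the segment between consecutive cluster points lies \emph{along} the axis of the half-met pair, and the right reason $\eta_D$ is not a vertex is that it has only two directions there (not natural) and a small neighborhood contained in that axis (not distinguished). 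Neither repair disturbs your structure. Finally, note two edge cases that you (like the paper's own argument) pass over: if some pair contains $\infty$ its pair-disc $D_{(i)}$ is undefined, so in part (a) one should separate distinct pairs using the pair-disc of a pair of finite points (at most one pair can contain $\infty$); and if $\infty \notin A$ and the maximal cluster has $\geq 3$ children, the top point $\eta_{D_A}$ is a natural vertex not reached by clusters with $\#\mathfrak{s} \leq 2g+1$, so your surjectivity claim (and the literal statement of the correspondence) needs $\infty \in A$, which is how the proposition is used in the paper.
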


\begin{proof}

Throughout this proof, for any index $i$, we write $\mathfrak{s}_i \subset A$ for the minimal cluster containing the elements $a_i, b_i$, and we write $D_i \subset \cc_K$ for the corresponding disc; note that we have $\eta_{D_i} = \eta_{a_i} \vee \eta_{b_i}$.  Fix an index $i$, and suppose that there is an even-cardinality cluster $\mathfrak{s}$ such that $\#(\mathfrak{s} \cap \{a_i, b_i\}) = 1$.  By considering the cardinality, it is clear that there is an index $l \neq i$ such that $\#(\mathfrak{s} \cap \{a_l, b_l\}) = 1$.  We then clearly have $\eta_{D_{\mathfrak{s}}} \in \Lambda_{(i)} \cap \Lambda_{(l)}$, which contradicts the fact that $S$ is clustered in pairs.  It follows that we have $a_i \sim b_i$.

Now let $i \neq j$ be distinct indices.  We must have $\mathfrak{s}_i \neq \mathfrak{s}_j$, because otherwise we would get $\eta_{a_i} \vee \eta_{b_i} = \eta_{D_i} = \eta_{D_j} = \eta_{a_j} \vee \eta_{b_j} \in \Lambda_{(i)} \cap \Lambda_{(j)}$, which would contradict being clustered in pairs.  Now assume without loss of generality that $\mathfrak{s}_i$ does not contain $\mathfrak{s}_j$.  This means by definition of $\mathfrak{s}_j$ that we have $a_j, b_j \notin \mathfrak{s}_i$, which is also the case if $\mathfrak{s}_i$ and $\mathfrak{s}_j$ are disjoint.  In order to prove that $a_j, b_j \not\sim a_i, b_i$, it then suffices to show that both $\mathfrak{s}_i$ and $\mathfrak{s}_j$ have even cardinality.  If the cluster $\mathfrak{s}_i$ had odd cardinality, that would imply that there is an index $l$ such that $\#(\mathfrak{s}_i \cap \{a_l, b_l\}) = 1$ and therefore that $\eta_{D_i} = \eta_{a_i} \vee \eta_{b_i}$ lies in $\Lambda_{(l)}$; since $\eta_{a_i} \vee \eta_{b_i}$ also lies in $\Lambda_{(i)}$, this again contradicts the fact that $S$ is clustered in pairs.  Therefore, we have $\mathfrak{s}_i$ has even cardinality and, by the exact same argument, so does $\mathfrak{s}_j$.  This completes the proof of part (a).

Let $\mathfrak{s}$ be a cluster of $A$ which is not the union of $\geq 2$ even-cardinality sub-clusters.  Choose an element in $A$ which does not lie in a proper even-cardinality sub-cluster of $A$; we may call this element $a_i$ for some index $i \in \{0, \dots, g\}$.  Then since we have $a_i \sim b_i$ by part (a), we either have $b_i \notin A$ or that $b_i$ is also an element in $A$ which does not lie in a proper even-cardinality sub-cluster of $A$.  In the former case, we immediately get $\eta_{D_{\mathfrak{s}}} \in \Lambda_{(i)}$, whereas in the latter case, we claim that $\eta_{D_{\mathfrak{s}}} = \eta_{a_i} \vee \eta_{b_i} \in \Lambda_{(i)}$.  To see this, let $\mathfrak{c} \subseteq \mathfrak{s}$ be the smallest sub-cluster containing $a_i, b_i$.  If there were $\geq 3$ elements of $\mathfrak{c}$ not lying in a proper even-cardinality sub-cluster of $\mathfrak{c}$, they would all lie in the same equivalence class and contradict part (a), so it must be the case that $a_i, b_i \in \mathfrak{c}$ are the only elements not lying in a proper even-cardinality sub-cluster, and so $\mathfrak{c}$ has even cardinality.  By construction of $a_i, b_i$, we then get $\mathfrak{c} = \mathfrak{s}$, which implies our claim.  We therefore have $\eta_{D_{\mathfrak{s}}} \in \Lambda_{(i)}$, and we now only need to show that any neighborhood of $\eta_{D_{\mathfrak{s}}}$ contains a point $\eta \in \Sigma_S \smallsetminus \Lambda_{(i)}$, as making such a neighborhood small enough ensures that $\eta$ does not lie in any other axis $\Lambda_{(j)}$.  Note that for any index $j \neq i$, we have $[\eta_{D_{\mathfrak{s}}}, \eta_{a_j}] \subset \Sigma_S$ and that every neighborhood of $\eta_{D_{\mathfrak{s}}}$ contains a sub-path $[\eta_{D_{\mathfrak{s}}}, \eta_{D_\epsilon}] \subset [\eta_{D_{\mathfrak{s}}}, \eta_{a_j}]$.  If the cluster $\mathfrak{s}$ has even cardinality, then there is an index $j \neq i$ such that $a_j \notin \mathfrak{s}$, and then one may take $D_\epsilon$ to be a slightly larger disc containing $D_{\mathfrak{s}} = D_i$, and it is clear that $D_\epsilon \notin \Lambda_{(i)}$.  If, on the other hand, the cluster $\mathfrak{s}$ has odd cardinality, then we have $a_i \in \mathfrak{s}$ and $b_i \notin \mathfrak{s}$; as the elements of $\mathfrak{s} \smallsetminus \{a_i\}$ are not equivalent to $a_i$, there is an even-cardinality sub-cluster $\mathfrak{c} \subseteq \mathfrak{s} \smallsetminus \{a_i\}$.  Choosing $a_j \in \mathfrak{c}$, we get that $a_i, b_i \notin D_\epsilon \subsetneq D_{\mathfrak{s}}$, so that $\eta_{D_\epsilon} \notin \Lambda_{(i)}$.  So in either case, we are done showing that the point $\eta_{D_{\mathfrak{s}}}$ corresponding to the cluster $\mathfrak{s}$ is a distinguished vertex and have proved one case of part (b).

Now let $\mathfrak{s}$ be a cluster of $A$ which is the union of even-cardinality sub-clusters; let us write $\mathfrak{s} = \mathfrak{c}_1 \sqcup \dots \sqcup \mathfrak{c}_s$ for some $s \geq 2$, where each $\mathfrak{c}_l$ is a maximal proper even-cardinality sub-cluster of $\mathfrak{s}$.  For each index $i$, it follows from part (a) that we either have $a_i, b_i \in \mathfrak{c}_l$ for some $l$ or have $a_i, b_i \notin \mathfrak{s}$.  In the former case, we have $D_{\mathfrak{s}} \supsetneq D_i$, and in the latter case, we have $a_i, b_i \notin D_{\mathfrak{s}}$, so in either case we get $\eta_{D_{\mathfrak{s}}} \notin \Lambda_{(i)}$, implying that $\eta_{D_{\mathfrak{s}}}$ is not a distinguished vertex.  For $1 \leq l \leq s$, choose an element $a_{(l)} \in \mathfrak{c}_l$.  We have $[\eta_{D_{\mathfrak{s}}}, \eta_{a_{(l)}}] \subset \Sigma_S$ and that every neighborhood of $\eta_{D_{\mathfrak{s}}}$ contains a sub-path $[\eta_{D_{\mathfrak{s}}}, \eta_{D_{\epsilon, l}}] \subset [\eta_{D_{\mathfrak{s}}}, \eta_{a_{(l)}}]$, with $\mathfrak{c}_l \subset D_{\epsilon, l} \subsetneq D_{\mathfrak{s}}$.  At the same time, as there is some index $i$ such that $a_i \notin \mathfrak{s}$, we have $[\eta_{D_{\mathfrak{s}}}, \eta_{a_i}] \subset \Sigma_S$ and that every neighborhood of $\eta_{D_{\mathfrak{s}}}$ contains a sub-path $[\eta_{D_{\mathfrak{s}}}, \eta_{D_{\epsilon, 0}}] \subset [\eta_{D_{\mathfrak{s}}}, \eta_{a_{(l)}}]$, with $D_{\epsilon, 0} \supsetneq D_{\mathfrak{s}}$.  It is clear that the shortest path between any pair of these points $\eta_{D_{\epsilon, 0}}, \eta_{D_{\epsilon, 1}}, \dots, \eta_{D_{\epsilon, s}}$ passes through $\eta_{D_{\mathfrak{s}}}$ and therefore, the neighborhood contains a star shape centered at $\eta_{D_{\mathfrak{s}}}$ (with at least $s + 1 \geq 3$ edges coming out).  The point $\eta_{D_{\mathfrak{s}}}$ is thus a natural vertex.  This completes the proof of part (b).
\end{proof}

\subsection{Cluster data and metric properties of convex hulls} \label{sec berk dictionary}

We may now establish a dictionary between the cluster data of a finite subset $A \subset \proj_K^1$ and the metric properties of its convex hull $\Sigma_S$.  In order to express the following results, we define an order relation (denoted by $>$) on $\Berk$ by decreeing that $\xi > \xi'$ for any points $\xi, \xi' \in \Berk$ means that $\xi \in [\xi', \eta_\infty] \smallsetminus \{\xi'\}$.  Note that by this definition, we have $\eta_D > \eta_{D'}$ for discs $D, D' \subset \cc_K$ if and only if we have $D \supsetneq D'$.

The following proposition justifies the use of the letter $\delta$ for both depth of a cluster and the distance function on $\Berk$.

\begin{prop} \label{prop depths distances}

Suppose that $\mathfrak{s}$ is a non-maximal cluster of a set $S$ which is clustered in pairs, and let $v = \eta_{D_{\mathfrak{s}}} \in \Sigma_S$ be the vertex corresponding to $\mathfrak{s}$ as in \Cref{prop vertex cluster correspondence}(b).  Then the relative depth $\delta(\mathfrak{s})$ is equal to $\delta(v, v')$, where $v' \in \Sigma_S$ is the closest vertex to $v$ satisfying $v' > v$.

\end{prop}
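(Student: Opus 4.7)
The plan is to identify the candidate $v' = \eta_{D_{\mathfrak{s}'}}$, where $\mathfrak{s}'$ is the smallest cluster of $S$ properly containing $\mathfrak{s}$, and then to compute $\delta(v, v')$ directly from the metric on $\Berk$. The key inputs are the correspondence of \Cref{prop vertex cluster correspondence}(b), the definition of the order $>$ via paths to $\eta_\infty$, and the elementary ultrametric identity that the smallest disc containing a finite subset $\mathfrak{c} \subset \cc_K$ has logarithmic radius $\min_{z, z' \in \mathfrak{c}} v(z - z') = d(\mathfrak{c})$.

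I would first verify that $w_0 := \eta_{D_{\mathfrak{s}'}}$ satisfies $w_0 > v$ and is a vertex of $\Sigma_S$: since $\mathfrak{s} \subsetneq \mathfrak{s}'$ and $D_{\mathfrak{s}} \cap S = \mathfrak{s}$, there is an element of $\mathfrak{s}'$ lying outside $D_{\mathfrak{s}}$, which forces $D_{\mathfrak{s}} \subsetneq D_{\mathfrak{s}'}$ and hence $w_0 \in [v, \eta_\infty] \setminus \{v\}$; and $w_0$ is a vertex by \Cref{prop vertex cluster correspondence}(b). The heart of the argument is to show that no other vertex of $\Sigma_S$ lies strictly between $v$ and $w_0$, so that $w_0$ is genuinely the closest such vertex (noting that by the definition of $>$, every vertex satisfying $v'' > v$ lies on $[v, \eta_\infty]$). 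Suppose for contradiction that a vertex $w$ of $\Sigma_S$ satisfies $v < w < w_0$. By \Cref{prop vertex cluster correspondence}(b) we have $w = \eta_{D_{\mathfrak{c}}}$ for some cluster $\mathfrak{c}$ of $S$ with $D_{\mathfrak{s}} \subsetneq D_{\mathfrak{c}} \subsetneq D_{\mathfrak{s}'}$, and intersecting with $S$ gives $\mathfrak{s} \subseteq \mathfrak{c} \subseteq \mathfrak{s}'$. Both inclusions must be strict: either of the equalities $\mathfrak{c} = \mathfrak{s}$ or $\mathfrak{c} = \mathfrak{s}'$ would force $D_{\mathfrak{c}} = D_{\mathfrak{s}}$ or $D_{\mathfrak{c}} = D_{\mathfrak{s}'}$ respectively, contradicting the strict inclusions on discs. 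But then $\mathfrak{c}$ is a cluster strictly between $\mathfrak{s}$ and $\mathfrak{s}'$, contradicting the minimality of $\mathfrak{s}'$; hence $v' = w_0 = \eta_{D_{\mathfrak{s}'}}$.

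With this identification in hand, the final step is routine: since $D_{\mathfrak{s}} \subseteq D_{\mathfrak{s}'}$, the metric on $\Berk$ given in \Cref{dfn berk} yields
\begin{equation*}
\delta(v, v') = d(D_{\mathfrak{s}}) - d(D_{\mathfrak{s}'}),
\end{equation*}
and the ultrametric identity recalled at the outset gives $d(D_{\mathfrak{s}}) = d(\mathfrak{s})$ and $d(D_{\mathfrak{s}'}) = d(\mathfrak{s}')$, so that $\delta(v, v') = d(\mathfrak{s}) - d(\mathfrak{s}') = \delta(\mathfrak{s})$ by the definition of relative depth. I expect the only mildly delicate point to be the strictness argument in the previous paragraph; once one is careful that clusters correspond bijectively to vertices via smallest containing discs, the proof reduces to this bit of bookkeeping together with the metric definition.
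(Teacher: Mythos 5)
Your proposal is correct and follows essentially the same route as the paper: identify $v'$ as $\eta_{D_{\mathfrak{s}'}}$ for the smallest cluster $\mathfrak{s}'$ properly containing $\mathfrak{s}$ (via \Cref{prop vertex cluster correspondence}(b) and the order on discs), and then observe that $\delta(v, v') = d(D_{\mathfrak{s}}) - d(D_{\mathfrak{s}'}) = d(\mathfrak{s}) - d(\mathfrak{s}') = \delta(\mathfrak{s})$ by the definition of the metric. The only difference is that you spell out the minimality/strictness bookkeeping that the paper dismisses as "clear from definitions," which is a fine (and slightly more careful) rendering of the same argument.
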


\begin{proof}

It is clear from definitions and from the correspondence between clusters and vertices established by \Cref{prop vertex cluster correspondence}(b) that the closest vertex $v'$ to $v$ satisfying $v' > v$ corresponds to the smallest cluster $\mathfrak{s}'$ which properly contains $\mathfrak{s}$.  We have $\delta(\mathfrak{s}) = d(\mathfrak{s}) - d(\mathfrak{s}')$, which equals the difference in logarithmic radii of the minimal discs $D_{\mathfrak{s}}, D_{\mathfrak{s}'}$ respectively containing the clusters $\mathfrak{s}, \mathfrak{s}'$, and which in turn by definition equals the distance between the respective points $v = \eta_{D_{\mathfrak{s}}}, v' = \eta_{D_{\mathfrak{s}'}}$ in $\Berk$.
\end{proof}

Our above results now allow us to present a definition in the language of clusters of being clustered in $r$-separated pairs.

\begin{prop} \label{prop clustered in pairs alternate definition}

Given any cluster $\mathfrak{s} \subset A$, write $\mathfrak{s}'$ (resp. $\mathfrak{s}^\sim$) for the smallest cluster properly containing $\mathfrak{s}$ (resp. properly containing $\mathfrak{s}$ and which is not itself the disjoint union of $\geq 2$ even-cardinality sub-clusters), if such a cluster properly containing $\mathfrak{s}$ exists.

A set $A$ is clustered in $r$-separated pairs for some $r \in \rr_{> 0}$ if and only if the following conditions hold, for any even-cardinality clusters $\mathfrak{c}, \mathfrak{c}_1, \mathfrak{c}_2$ which are themselves not the disjoint union of $\geq 2$ even-cardinality clusters:
\begin{enumerate}[(i)]
\item the set $A$ is clustered in pairs;
\item if $\mathfrak{c}^\sim$ is defined, we have $d(\mathfrak{c}) - d(\mathfrak{c}^\sim) > 2r$; and 
\item if $\mathfrak{c}_1', \mathfrak{c}_2'$ are defined and $\mathfrak{s} := \mathfrak{c}_1' = \mathfrak{c}_2'$, we have $\delta(\mathfrak{c}_1) + \delta(\mathfrak{c}_2) > 2r$. 
\end{enumerate}

\end{prop}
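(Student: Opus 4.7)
Plan: My approach is to use \Cref{prop clustered in pairs distinguished vertices} as the main bridge, which equates $r$-separation with a distance condition on distinguished vertices of $\Sigma_A$ lying on different axes, and to translate this condition into the cluster language via the dictionary from \Cref{prop vertex cluster correspondence}(b) together with \Cref{prop depths distances}. Throughout, I write $\mathfrak{c}_i := A \cap D_i$ for each pair-index $i$.

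First, I identify the relevant vertices precisely. A good even cluster $\mathfrak{c}$ (i.e., one that is not a disjoint union of $\geq 2$ even sub-clusters) must satisfy $\mathfrak{c} = \mathfrak{c}_i$ for a unique index $i$, and its corresponding distinguished vertex $v_{\mathfrak{c}}$ is the top $\eta_{D_i}$ of the axis $\Lambda_{(i)}$; conversely, each $\mathfrak{c}_i$ is a good even cluster (the pair $\{a_i, b_i\}$ cannot lie in any proper sub-cluster of $\mathfrak{c}_i$ since $D_i$ is minimal containing both). Meanwhile, a distinguished vertex corresponding to an odd-cardinality cluster lies strictly in the interior of some axis, below its top, and any path from such an interior point to a point on a different axis necessarily passes through the axis-top. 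So the distance from an odd-cluster distinguished vertex to any distinguished vertex on a different axis strictly exceeds the distance from $\eta_{D_i}$ to that other vertex. This reduces the condition of \Cref{prop clustered in pairs distinguished vertices} to the restricted condition $\delta(v_{\mathfrak{c}_i}, v_{\mathfrak{c}_j}) > 2r$ for every pair of distinct good even clusters $\mathfrak{c}_i, \mathfrak{c}_j$.

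I then analyze these pairs in two cases. In the comparable case $\mathfrak{c}_1 \subsetneq \mathfrak{c}_2$, the monotone path gives $\delta(v_{\mathfrak{c}_1}, v_{\mathfrak{c}_2}) = d(\mathfrak{c}_1) - d(\mathfrak{c}_2)$; since $\mathfrak{c}_2$ is itself a good cluster properly containing $\mathfrak{c}_1$, we have $\mathfrak{c}_1^\sim \subseteq \mathfrak{c}_2$, so this distance is at least $d(\mathfrak{c}_1) - d(\mathfrak{c}_1^\sim)$, with equality when $\mathfrak{c}_2 = \mathfrak{c}_1^\sim$---precisely condition (ii). In the incomparable case, with $\mathfrak{s}$ the smallest cluster containing both $\mathfrak{c}_1$ and $\mathfrak{c}_2$, the Berkovich path structure yields $\delta(v_{\mathfrak{c}_1}, v_{\mathfrak{c}_2}) = (d(\mathfrak{c}_1) - d(\mathfrak{s})) + (d(\mathfrak{c}_2) - d(\mathfrak{s}))$; when $\mathfrak{c}_1' = \mathfrak{c}_2' = \mathfrak{s}$ this equals $\delta(\mathfrak{c}_1) + \delta(\mathfrak{c}_2)$, which is exactly condition (iii). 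For the backward direction in the ``loose'' sub-case where $\mathfrak{c}_1' \subsetneq \mathfrak{s}$, I trace the ascending chain from $\mathfrak{c}_1$ inside $\mathfrak{s}$: either a good even cluster appears along the way (so $\mathfrak{c}_1^\sim \subseteq \mathfrak{s}$ and condition (ii) applied to $\mathfrak{c}_1$ already gives a subsegment of the path exceeding $2r$), or the immediate parents of $\mathfrak{c}_1, \mathfrak{c}_2$ necessarily coincide with $\mathfrak{s}$ (so (iii) applies directly).

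The main obstacle is the combinatorial bookkeeping in this last sub-case: one must argue that the partition structure of $\mathfrak{s}$ as a disjoint union of maximal proper even sub-clusters, combined with the fact that $\mathfrak{c}_1, \mathfrak{c}_2 \subseteq \mathfrak{s}$ are both good, forces either the emergence of a good even cluster strictly between $\mathfrak{c}_1$ and $\mathfrak{s}$ (triggering (ii)) or the equality $\mathfrak{c}_1' = \mathfrak{c}_2' = \mathfrak{s}$ (triggering (iii)). Once that case analysis is complete, the forward direction reads off (ii) by choosing the pair $(\mathfrak{c}, \mathfrak{c}^\sim)$ and (iii) by choosing a pair of siblings sharing a common immediate parent, in each case producing a valid distinguished-vertex pair on different axes and invoking \Cref{prop clustered in pairs distinguished vertices}.
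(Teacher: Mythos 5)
There is a genuine gap in your reduction step. You claim that a distinguished vertex coming from an odd-cardinality cluster lies in the interior of some axis $\Lambda_{(i)}$ below its top $\eta_{D_i}$ \emph{and} that any path from such a vertex to a point on a different axis must pass through $\eta_{D_i}$, so that the criterion of \Cref{prop clustered in pairs distinguished vertices} only needs to be checked on pairs of ``axis-top'' vertices of good even clusters. The second half of this claim is false: the path from an interior vertex of $\Lambda_{(i)}$ to another axis typically branches \emph{downward}, into the disc defining that vertex, and never reaches the top of $\Lambda_{(i)}$. Concretely, take $v(t)=1$ and $A=\{a_0=0,\ b_0=t^{-1},\ a_1=t^5,\ b_1=t^5+t^{10}\}$. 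This set is clustered in the pairs $\{a_0,b_0\},\{a_1,b_1\}$; the odd cluster $\{a_0,a_1,b_1\}$ gives a distinguished vertex $\eta_{D}$ with $D=\{z: v(z)\geq 5\}$ lying on $\Lambda_{(0)}$, and the top of $\Lambda_{(1)}$ is $\eta_{D_1}$ with $D_1=\{z: v(z-t^5)\geq 10\}$. The path from $\eta_D$ to $\eta_{D_1}$ descends into $D$ and has length $5$, which is exactly $\delta(\Lambda_{(0)},\Lambda_{(1)})$, whereas the top of $\Lambda_{(0)}$ (the point $\eta_{D_0}$, $D_0=\{v\geq -1\}$) is at distance $11$ from $\eta_{D_1}$ (and is not even a distinguished vertex here). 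So for $r=3$ every ``top--top'' distance exceeds $2r$, yet $A$ is not clustered in $3$-separated pairs. Hence your reduced condition is strictly weaker than $r$-separation, and your backward direction, which only verifies distances between good-even-cluster vertices, does not prove that (i)--(iii) imply $r$-separation. Note that in this example it is condition (ii) with the \emph{odd} cluster $\mathfrak{c}^\sim=\{a_0,a_1,b_1\}$ that detects the true constraint $d(\{a_1,b_1\})-d(\mathfrak{c}^\sim)=5>2r$; the proposition's $\mathfrak{c}^\sim$ is deliberately allowed to be odd, and your case analysis (which tracks only even clusters, e.g.\ in the ``loose'' sub-case dichotomy) loses exactly this information.

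The paper's proof avoids this by never restricting to axis tops: for the backward direction it takes an arbitrary pair of distinguished vertices $v_1\neq v_2$, replaces $v_2$ by the closest distinguished vertex to $v_1$ on $[v_1,v_2]\smallsetminus\{v_1\}$, and then applies (ii) when the two are comparable under the partial order (here the upper vertex may correspond to an odd cluster, which is why $\mathfrak{c}^\sim$ ranges over all clusters that are not unions of $\geq 2$ even sub-clusters) and (iii) when they are incomparable; for the forward direction it checks, using the observation that points just above an even-cluster vertex lie on no axis, that the vertex pairs implicit in (ii) and (iii) do lie on different axes, a verification you assert but do not supply. Your forward direction can be repaired along those lines, but the backward direction needs to handle distances to odd-cluster distinguished vertices directly; as written, the argument does not establish them.
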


\begin{proof}

We know from \Cref{prop clustered in pairs distinguished vertices} that $A$ is clustered in $r$-separated pairs if and only if, for each pair of distinguished vertices $v, v' \in \Sigma_A$ which do not lie on the same axis $\Lambda_{(i)}$, we have $\delta(v, v') > 2r$; we will show that this latter condition is equivalent to properties (i)-(iii).

Property (i) is implied by the property of being clustered in $r$-separated pairs by definition.  Note that for any point $w = \eta_D$ sufficiently close to the vertex $v = \eta_{D_{\mathfrak{s}}}$ corresponding (as in the statement of \Cref{prop vertex cluster correspondence}) to an even-cardinality cluster $\mathfrak{s}$ with $w > v$, we have $D \cap A = \mathfrak{s}$ and $D \supsetneq D_{\mathfrak{s}}$ so that $w = \eta_D$ does not lie in any axis $\Lambda_{(i)}$.  Thus, if $v'$ is another vertex such that the path $[v, v'] \subset \Sigma_A$ contains a point $w > v$, it is not possible for $v$ and $v'$ to lie in the same axis $\Lambda_{(i)}$.  Now using \Cref{prop vertex cluster correspondence}(b) and \Cref{prop depths distances}, properties (ii) and (iii) can each be interpreted as saying that the distance between a pair of distinguished vertices satisfying a certain property is $> 2r$ (consulting \Cref{dfn berk} and keeping in mind $\delta(\mathfrak{c}_1) + \delta(\mathfrak{c}_2) = d(\mathfrak{c}_1) + d(\mathfrak{c}_2) - 2d(\mathfrak{s})$ and that $d(\mathfrak{c}) = d(D_{\mathfrak{c}})$ for any cluster $\mathfrak{c}$).  In the case of property (ii), these distinguished vertices $v, v'$ satisfy that $v'$ is the closest distinguished vertex to $v$ such that $v' > v$, while in the case of property (iii), these distinguished vertices $w_1, w_2$ satisfy that $v := w_1 \vee w_2$ is the closest vertex to $w_1$ (resp. $w_2$) such that $v > w_1$ (resp. $v > w_2$).  In both cases, our above observations imply that the two distinguished vertices in question cannot lie in the same axis $\Lambda_{(i)}$.  The property of being clustered in $r$-separated pairs therefore implies properties (ii) and (iii).

Conversely, suppose that a set $A$ which is clustered in pairs satisfies properties (ii) and (iii), and choose distinguished vertices $v_1 \neq v_2$ of $\Sigma_A$; we need to prove that $\delta(v_1, v_2) > 2r$.  It clearly suffices to replace $v_2$ with the closest distinguished vertex to $v_1$ in the half-open segment $[v_1, v_2] \smallsetminus \{v_1\}$.  Having made such a replacement, if we have $v_2 > v_1$ or $v_1 > v_2$, then the desired inequality is provided by property (ii).  If we do not have $v_2 > v_1$ or $v_1 > v_2$, then it is clear that $v_1 \vee v_2 > v_1, v_2$ satisfies that there is no distinguished vertex lying in the interior of either of the paths $[v_1, v_1 \vee v_2], [v_2, v_1 \vee v_2]$, and so the desired inequality is provided by property (iii).
\end{proof}

\begin{rmk} \label{rmk cluster data and graph}

Given a subset $A \subset \proj_K^1$ which is clustered in pairs, let $\Sigma_A^* \subset \Sigma_A$ be the convex hull of the vertices of $\Sigma_A$ (\textit{i.e.} the smallest connected subspace of $\Sigma_A$ containing its vertices).  Propositions \ref{prop vertex cluster correspondence}(b) and \ref{prop depths distances} show that $\Sigma_A^*$ is a finite metric graph and that the cluster data (resp. the combinatorial cluster data) of $A$ determines and is determined (up to alterations induced by replacing $A$ with its image under a fractional linear transformation) by the metric graph isomorphism type (resp. the graph isomorphism type) of $A$.  More precisely, if we let $\varphi: A \to A'$ be a bijection of finite subsets of $\proj_K^1$ and assume that we have $\infty \in A \cap A'$ and that $\varphi(\infty) = \infty$, then the map $\varphi$ acts as a bijection between clusters of $A$ and clusters of $A'$ if and only if this bijection of clusters, viewed as a bijection between the vertex sets of $\Sigma_A^*$ and $\Sigma_{A'}^*$ via \Cref{prop vertex cluster correspondence}(b), is a graph isomorphism; moreover, this bijection preserves the relative depths of clusters if and only this corresponding graph isomorphism is an isometry.

\end{rmk}

\begin{prop} \label{prop berkovich version implies cluster version}

\Cref{thm main berk} implies \Cref{thm main clusters}.

\end{prop}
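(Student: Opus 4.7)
The plan is to translate \Cref{thm main berk} into the cluster language of \Cref{thm main clusters} by combining the correspondence between vertices of convex hulls and clusters (\Cref{prop vertex cluster correspondence}(b)) with the identification of relative depths as Berkovich distances (\Cref{prop depths distances}). For part (a), the homeomorphism $\pi_* : \Sigma_S \to \Sigma_{\mathcal{B}}$ extending $\pi$ must carry each axis $\Lambda_{(i)}$ onto $\Lambda_{\alpha_i, \beta_i}$ in $\Sigma_{\mathcal{B}}$, since these are the unique non-backtracking paths joining the $\pi$-paired Type I endpoints. Because the distinguished and natural vertex conditions are topological, $\pi_*$ bijects vertex sets, and \Cref{prop vertex cluster correspondence}(b) promotes this to a bijection between clusters of $S$ of cardinality in $[2, 2g+1]$ and clusters of $\mathcal{B}$ of the same size range. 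To identify this bijection with $\mathfrak{s} \mapsto \pi(\mathfrak{s})$, I would use the hypothesis $b_g = \beta_g = \infty$, which forces $\pi_*(\eta_\infty) = \eta_\infty$: the cluster corresponding to a vertex $v$ is precisely the set of $z \in S$ with $v \in [\eta_z, \eta_\infty]$, a property clearly preserved by $\pi_*$.

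For part (b), \Cref{prop depths distances} rewrites $\delta(\mathfrak{s}) = \delta(v, v')$ and $\delta(\pi(\mathfrak{s})) = \delta(\pi_*(v), \pi_*(v'))$, where $v$ and $v'$ are the vertices corresponding respectively to $\mathfrak{s}$ and to the smallest cluster $\mathfrak{s}'$ properly containing $\mathfrak{s}$. The distance formula of \Cref{thm main berk} then gives
\begin{equation*}
\delta(\pi(\mathfrak{s})) = \delta(\mathfrak{s}) + (p-1)\mu(v, v'),
\end{equation*}
so the task reduces to computing $\mu(v, v')$ in each case. In case (i), the odd cardinality of $\mathfrak{s}$ combined with the disjointness of the axes forces a unique index $i$ with $\#(\mathfrak{s} \cap \{a_i, b_i\}) = 1$, placing $v \in \Lambda_{(i)}$; tracing $\Lambda_{(i)}$ upward from $v$ to the next vertex (either another ``$i$-split'' odd-cluster vertex or the axis top $\eta_{a_i} \vee \eta_{b_i}$, both on $\Lambda_{(i)}$) shows $v' \in \Lambda_{(i)}$ as well, so $[v, v'] \subseteq \Lambda_{(i)} \subseteq \hat{\Lambda}_{(i)}$, and $\mu(v, v') = \delta(v, v')$ produces the factor-of-$p$ dilation. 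In case (ii), $v(p) = 0$ makes $\hat{\Lambda}_{(j)} = \Lambda_{(j)}$; since $[v, v']$ is an edge of the metric graph $\Sigma_S^*$ it cannot cross the interior of any axis (else an intermediate natural vertex would appear, contradicting that $v, v'$ are adjacent), hence it meets the axes only possibly at its endpoints, yielding $\mu(v, v') = 0$. In case (iii), both $v$ and $v'$ are distinguished by \Cref{prop vertex cluster correspondence}(b), each seated at the top $\eta_{a_i} \vee \eta_{b_i}$ or $\eta_{a_j} \vee \eta_{b_j}$ of its axis; the $\frac{v(p)}{p-1}$-separation condition of \Cref{prop clustered in pairs}, together with the fact that no other axis is traversed by $[v, v']$, ensures the path meets $\hat{\Lambda}_{(i)}$ in an initial segment of length exactly $\frac{v(p)}{p-1}$ and $\hat{\Lambda}_{(j)}$ in a terminal segment of the same length, giving $\mu(v, v') = \frac{2v(p)}{p-1}$ and hence $\delta(\pi(\mathfrak{s})) = \delta(\mathfrak{s}) + 2v(p)$.

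The main obstacle will be the careful geometric analysis in case (iii): one must verify that the edge $[v, v']$ in $\Sigma_S^*$ does not approach any other axis $\Lambda_{(l)}$ within distance $\frac{v(p)}{p-1}$, which would otherwise contribute extra length to $\mu(v, v')$ and spoil the claimed formula. The optimality of $S$, together with the adjacency of $v$ and $v'$ in $\Sigma_S^*$ and the strict separation of the axes, should be what ensures $[v, v']$ stays sufficiently far from every other tubular neighborhood, but verifying this is the geometric content where the bulk of the work lies.
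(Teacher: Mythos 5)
Your overall route is the same as the paper's: part (a) via the homeomorphism property of $\pi_*$ together with \Cref{prop vertex cluster correspondence}(b), and part (b) by using \Cref{prop depths distances} to rewrite both relative depths as distances from the vertex $v$ corresponding to $\mathfrak{s}$ to the next vertex $v'$ above it, and then computing $\mu(v,v')$ to be $\delta(v,v')$, $0$, and $\tfrac{2v(p)}{p-1}$ in the three cases. The case (i) argument is fine. The problems are in how you justify the value of $\mu(v,v')$ in cases (ii) and (iii).

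In case (ii), the claim that the edge ``cannot cross the interior of any axis, else an intermediate natural vertex would appear'' is not correct as stated: a point where $[v,v']$ meets and leaves an axis would be a \emph{distinguished} vertex (which still contradicts adjacency), but nothing purely graph-theoretic rules out the edge running \emph{along} an axis for a positive-length segment starting at the endpoint $v$, which creates no intermediate vertex at all and would make $\mu(v,v')>0$ even with $v(p)=0$. What actually excludes this is the cluster argument the paper uses: every interior point of $[v,v']$ is $\eta_D$ for a disc $D$ with $D\cap S=\mathfrak{s}$, and since $\mathfrak{s}$ has even cardinality and $S$ is clustered in pairs, a disc splitting one pair would split a second pair by parity and force two axes to meet, while $D=D_l$ for some pair $\{a_l,b_l\}\subset\mathfrak{s}$ would force $D_l\subseteq D_{\mathfrak{s}}\subsetneq D=D_l$; hence no interior point lies on any axis. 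In case (iii) you explicitly leave the central verification open (that the interior of $[v,v']$ stays at distance $>\tfrac{v(p)}{p-1}$ from every axis other than $\Lambda_{(i)}$ and $\Lambda_{(j)}$) and attribute it to optimality, so as written the proof of (b)(iii) is incomplete. In fact optimality is not what is needed: since there is no vertex in the interior of $[v,v']$, any path from an interior point $\eta$ to an axis $\Lambda_{(l)}$ with $l\neq i,j$ must exit through $v$ or $v'$, so $\delta(\eta,\Lambda_{(l)})\geq\min\{\delta(v,\Lambda_{(l)}),\delta(v',\Lambda_{(l)})\}\geq\min\{\delta(\Lambda_{(i)},\Lambda_{(l)}),\delta(\Lambda_{(j)},\Lambda_{(l)})\}>\tfrac{2v(p)}{p-1}$ by the $\tfrac{v(p)}{p-1}$-separation of the pairs (\Cref{prop clustered in pairs}), while the same separation via \Cref{prop clustered in pairs distinguished vertices} gives $\delta(v,v')>\tfrac{2v(p)}{p-1}$, so $\llbracket v,v'\rrbracket$ consists exactly of the two disjoint end segments of length $\tfrac{v(p)}{p-1}$ and $\mu(v,v')=\tfrac{2v(p)}{p-1}$. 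With these two repairs your argument matches the paper's proof.
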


\begin{proof}

\Cref{thm main berk} says that, under the optimality hypothesis which is common to both theorems' statements, the bijection $\pi : S \to \mathcal{B}$ extends to a map $\pi_* : \Sigma_S \to \Sigma_{\mathcal{B}}$ between the convex hulls which is a homeomorphism.  This last property implies that the image of the path between two points of Type I under $\pi_*$ coincides with the path between the images of those two points of Type I, so that in particular, we have $\pi_*(\Lambda_{a_i, b_i}) = \Lambda_{\alpha_i, \beta_i}$ for $0 \leq i \leq g$.  Now from the definitions and the fact that $\pi_*$ is a homeomorphism, it follows that $\pi_*$ acts as a bijection between distinguished (resp. natural) vertices of $\Sigma_S$ and those of $\Sigma_{\mathcal{B}}$.  By \Cref{prop vertex cluster correspondence}(b), then the bijection $\pi$ preserves combinatorial cluster data, thus implying \Cref{thm main clusters}(a).

Suppose that $\mathfrak{s}$ is an odd-cardinality cluster; let $v \in \Sigma_S$ be the corresponding vertex as in \Cref{prop vertex cluster correspondence}(b); and let $v' \in \Sigma_S$ be the closest vertex to $v$ satisfying $v' > v$.  Then the disc $D$ corresponding to any point in the interior of the path $[v, v'] \subset \Sigma_S$ must satisfy $D \cap A = \mathfrak{s}$ and therefore its intersection with $\{a_i, b_i\} \subset A$ must be a singleton for some index $i$; it is easy to deduce from this that we must then have $[v, v'] \subset \Lambda_{(i)}$.  Then, in the notation of \Cref{thm main berk}, we have $\llbracket v, v' \rrbracket = [v, v']$, implying $\mu(v, v') = \delta(v, v')$, and so the formula in (\ref{eq dilation}) gives us 
$\delta(\pi_*(v), \pi_*(v')) = p\delta(v, v')$.  Now we may apply \Cref{prop depths distances} to get \Cref{thm main clusters}(b)(i).

Now suppose that $\mathfrak{s}$ is an even-cardinality cluster, and define $v, v' \in \Sigma_S$ as before.  Again, the disc $D$ corresponding to any point in the interior of the path $[v, v'] \subset \Sigma_S$ satisfies $D \cap A = \mathfrak{s}$, and it is not the smallest such disc; by the hypothesis of being clustered in pairs, for each index $i$, we have $D \cap \{a_i, b_i\} = \varnothing$ or $D \supset \{a_i, b_i\}$, and so no point in the interior of $[v, v']$ lies in any axis $\Lambda_{(i)}$.  Assume for the moment that $p$ is not the residue characteristic of $K$.  Then, in the notation of \Cref{thm main berk}, we have $\mu(v, v') = 0$, and therefore, the formula in (\ref{eq dilation}) gives us $\delta(\pi_*(v), \pi_*(v')) = \delta(v, v')$, and applying \Cref{prop depths distances} implies \Cref{thm main clusters}(b)(ii).  Now let us drop any assumption on $p$ and rather assume for the moment that neither $\mathfrak{s}$ nor $\mathfrak{s}'$ is the union of $\geq 2$ even-cardinality clusters.  Then by \Cref{prop vertex cluster correspondence}(b), the corresponding vertices $v, v' \in \Sigma_S$ are distinguished.  Since there is no distinguished vertex in the interior of the path $[v, v']$ (as this interior does not intersect any axis $\Lambda_{(i)}$), we get 
\begin{equation}
\llbracket v, v' \rrbracket = [v, \tilde{v}] \sqcup [\tilde{v}', v'],
\end{equation}
where $\tilde{v}$ (resp. $\tilde{v}'$) is the (unique) point in $[v, v']$ of distance $\frac{v(p)}{p - 1}$ from $v$ (resp. $v'$).  We thus have $\mu(v, v') = \frac{2v(p)}{p - 1}$.  Therefore, the formula in (\ref{eq dilation}) gives us $\delta(\pi_*(v), \pi_*(v') = \delta(v, v') + 2v(p)$, and applying \Cref{prop depths distances} implies \Cref{thm main clusters}(b)(iii).
\end{proof}

\section{Proof of the main theorem} \label{sec proof of main}

This section is devoted to proving \Cref{thm main berk}.  Our first task is to gather some background results on the action of $\PGL_2(K)$ on the convex hull $\Sigma_S$ of an optimal subset $S \subset \proj_K^1$ which are variants of (and are proved using) results in the author's previous paper \cite{yelton2024branch}; this is done in \S\ref{sec proof of main p-Whittaker}.  Our method of proving our main result requires making explicit the modulo-action-of-$\Gamma_0$ function that takes $\Omega$ to $\proj_K^1 \cong \Omega / \Gamma_0$ as a \emph{theta function}, which is the topic of \S\ref{sec proof of main theta}.  The remaining three subsections are then dedicated to the actual proof, which is broken into three parts: first (in \S\ref{sec proof of main approximation}) the presentation and proof of a result (\Cref{thm one segment}) providing an approximation of values of one of these theta functions $\Theta$ at certain inputs under a simplifying hypothesis on $S$; then (in \S\ref{sec proof of main one segment}) the construction of an extension $\Theta_*$ of the theta function on $S$ to a map from the convex hull $\Sigma_S$ to $\Berk$ and an explicit formula for $\Theta_*$ when restricted to a single segment of $\Sigma_S$ (with the simplifying hypothesis retained); and finally (in \S\ref{sec proof of main whole}) a ``gluing'' argument that completes the proof of \Cref{thm main berk}.

We return to the setting where we have a $p$-superelliptic subset $S \subset \proj_K^1$, meaning that (in particular) the set $S$ is clustered in $\frac{v(p)}{p - 1}$-separated pairs $\{a_0, b_0\}, \dots, \{a_g, b_g\}$, and that, letting $s_i \in \PGL_2(K)$ be an order-$p$ automorphism fixing $a_i, b_i \in S$ for $0 \leq i \leq g$, the group $\Gamma_0 := \langle s_0, \dots, s_g \rangle$ is a $p$-Whittaker group (and so in particular is isomorphic to the free product of its cyclic subgroups $\langle s_i \rangle$).  The Schottky group $\Gamma \lhd \Gamma_0$ is the index-$p$ normal subgroup consisting of words on the generators $s_i$ whose total exponent is divisible by $p$.  Throughout this section, let $\Omega = \Omega_\Gamma = \Omega_{\Gamma_0}$ denote the set of non-limit points of $\Gamma$ (and of $\Gamma_0$) in $\proj_{\cc_K}^1$, noting that in previous parts of the paper we wrote $\Omega$ to refer to the set non-limit $K$-points rather than the non-limit $\cc_K$-points.

\subsection{Some useful results on $p$-Whittaker groups} \label{sec proof of main p-Whittaker}

Before we can start to prove \Cref{thm main berk} in earnest, we need to establish some useful properties of the action of the $p$-Whittaker group $\Gamma_0$ on the convex hull $\Sigma_S$ in the case that $S$ is optimal.  We begin by presenting a variation of our previous result \cite[Lemma 3.16]{yelton2024branch}.

\begin{lemma} \label{lemma off the convex hull variant}

Let $S \subset \proj_K^1$ be an optimal subset with associated Schottky group $\Gamma$, and choose a point $v \in \Sigma_S$ and a nontrivial element $\gamma \in \Gamma$, which we write as a word 
\begin{equation} \label{eq word}
\gamma = s_{i_t}^{n_t} s_{i_{t - 1}}^{n_{t - 1}} \cdots s_{i_1}^{n_1}
\end{equation}
 for some $t \geq 1$, some $n_1, \dots, n_t \in \zz \smallsetminus p\zz$, and some indices $i_l$ satisfying $i_l \neq i_{l - 1}$ for $2 \leq l \leq t$.  Then the closest point in $\Sigma_S$ to $\gamma(\eta)$ lies in $\hat{\Lambda}_{i_t}$, and in fact we have 
\begin{equation} \label{eq distance from gamma(eta) to hull}
\delta(\gamma(\eta), \Sigma_S) \geq \delta(\eta, \hat{\Lambda}_{i_1}) + \sum_{l = 2}^t \delta(\hat{\Lambda}_{i_{l-1}}, \hat{\Lambda}_{i_l}) > 0.
\end{equation}

\end{lemma}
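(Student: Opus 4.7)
The proof goes by induction on the word length $t$ of $\gamma = s_{i_t}^{n_t} \cdots s_{i_1}^{n_1}$. The key geometric input is that each generator $s_i$, as an order-$p$ element of $\PGL_2(K)$ fixing $a_i, b_i \in \proj_K^1$, acts on $\Berk$ as an isometry that fixes the axis $\Lambda_{(i)}$ pointwise; at any point $\xi \in \Lambda_{(i)}$ it permutes the tangent directions transverse to $\Lambda_{(i)}$ freely and cyclically in orbits of size $p$, and the same holds for every $s_i^n$ with $n \in \zz \smallsetminus p\zz$.

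The heart of the argument is a single-step assertion: let $\xi \in \Berk$ be a point whose closest point in $\Sigma_S$ lies in $\hat{\Lambda}_{(j)}$, and let $i \neq j$ and $n \in \zz \smallsetminus p\zz$; then the closest point in $\Sigma_S$ to $s_i^n(\xi)$ lies in $\hat{\Lambda}_{(i)}$ and satisfies $\delta(s_i^n(\xi), \Sigma_S) \geq \delta(\xi, \Sigma_S) + \delta(\hat{\Lambda}_{(j)}, \hat{\Lambda}_{(i)})$. The companion variant for $\xi \in \Sigma_S$ places the closest point of $\Sigma_S$ to $s_i^n(\xi)$ in $\hat{\Lambda}_{(i)}$, with distance at least $\delta(\xi, \hat{\Lambda}_{(i)})$. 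To prove this, I let $\xi_* \in \Lambda_{(i)}$ denote the closest point of $\Lambda_{(i)}$ to $\xi$; the path $[\xi, \xi_*]$ arrives at $\xi_*$ from some tangent direction $\vec{d}$, and applying the isometry $s_i^n$ (which fixes $\xi_*$) shows that the path $[s_i^n(\xi), \xi_*]$ arrives at $\xi_*$ from the rotated direction $s_i^n(\vec{d})$. Optimality of $S$ ensures that $s_i^n(\vec{d})$ is not a direction along which $\Sigma_S$ departs from $\xi_*$, so the unique non-backtracking path from $s_i^n(\xi)$ into $\Sigma_S$ must first traverse back to $\xi_* \in \hat{\Lambda}_{(i)}$; the distance bound then follows from the equality $\delta(s_i^n(\xi), \xi_*) = \delta(\xi, \xi_*)$ combined with the detour through $\xi_*$ into $\Sigma_S$.

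With the single-step assertion in hand, the inductive step is immediate. Write $\gamma = s_{i_t}^{n_t} \gamma'$ with $\gamma' := s_{i_{t-1}}^{n_{t-1}} \cdots s_{i_1}^{n_1}$. The inductive hypothesis applied to $\gamma'$ places the closest point of $\Sigma_S$ to $\gamma'(\eta)$ in $\hat{\Lambda}_{(i_{t-1})}$ with the stated telescoping lower bound for $\gamma'$; applying the single-step assertion to $\xi := \gamma'(\eta)$ with $j = i_{t-1}$ and $i = i_t$ (the reducedness hypothesis $i_t \neq i_{t-1}$ makes this applicable) yields that the closest point of $\Sigma_S$ to $\gamma(\eta)$ lies in $\hat{\Lambda}_{(i_t)}$ and that the distance picks up the further term $\delta(\hat{\Lambda}_{(i_{t-1})}, \hat{\Lambda}_{(i_t)})$, completing the induction. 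The base case $t = 1$ is just the companion variant applied directly with $\xi = \eta \in \Sigma_S$. Strict positivity of the final bound follows from \Cref{prop clustered in pairs}, which guarantees that each gap $\delta(\hat{\Lambda}_{(i_{l-1})}, \hat{\Lambda}_{(i_l)})$ is positive.

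The principal obstacle is establishing the single-step assertion rigorously, specifically the claim that the rotated direction $s_i^n(\vec{d})$ is not realized as a departure direction of $\Sigma_S$ at $\xi_*$. This is the essential content of the analogous step in \cite[Lemma 3.16]{yelton2024branch}, where optimality of $S$ was used to control the branches of $\Sigma_S$ leaving the axes $\Lambda_{(i)}$. The present variant differs from that result only by permitting $\eta$ to range over all of $\Sigma_S$ rather than over a restricted subset; since the tangent-direction analysis relies only on the free cyclic action of $\langle s_i \rangle$ on the transversal tangent directions at $\xi_*$ together with the optimality hypothesis on $S$, the same argument transfers without modification.
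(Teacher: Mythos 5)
Your reduction to a single-step assertion is structurally reasonable, but the geometric input you use to prove that assertion is false in exactly the case this paper is at pains to include, namely residue characteristic $p$. You assert that $s_i^n$ fixes the axis $\Lambda_{(i)}$ pointwise and permutes the transverse tangent directions at each point of $\Lambda_{(i)}$ freely in orbits of size $p$. Conjugating $s_i$ to $z \mapsto \zeta_p z$, one checks that a disc $D$ with $\eta_D$ at distance $d$ from the axis satisfies $\zeta_p^n D = D$ precisely when $d \leq v(\zeta_p^n - 1) = \frac{v(p)}{p-1}$. So in the wild case $s_i^n$ fixes pointwise the entire closed tubular neighborhood $\hat{\Lambda}_{(i)}$, not just the axis, and its action on the tangent directions at points of $\Lambda_{(i)}$ is \emph{trivial}; the free ``rotation'' only occurs on directions leaving $\hat{\Lambda}_{(i)}$ at its boundary. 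Consequently your key step fails as stated: the path $[s_i^n(\xi), \xi_*]$ arrives at $\xi_* \in \Lambda_{(i)}$ from the \emph{same} direction $\vec{d}$, since the final portion of $[\xi,\xi_*]$ inside $\hat{\Lambda}_{(i)}$ is fixed pointwise. The analysis has to be carried out at the point where $[\xi,\xi_*]$ enters $\hat{\Lambda}_{(i)}$, which is why both the separation hypothesis (\Cref{prop clustered in pairs}) and the conclusion of the lemma are phrased in terms of $\hat{\Lambda}_{(i)}$ rather than $\Lambda_{(i)}$. Since you ultimately defer the crucial ``the rotated branch misses $\Sigma_S$'' claim to \cite[Lemma 3.16]{yelton2024branch} anyway, and your claim that the argument ``transfers without modification'' rests on the incorrect tangent picture, the proposal as written does not close the gap.

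The paper's own proof is much shorter and avoids re-running any tangent-direction argument: it uses \cite[Lemma 3.16]{yelton2024branch} as a black box. If $\eta \notin \hat{\Lambda}_{(i_1)}$, that lemma applies directly. If $\eta \in \hat{\Lambda}_{(i_1)}$, then (by the very fact you missed) $s_{i_1}^{n_1}$ fixes $\eta$, so $\gamma(\eta) = (\gamma s_{i_1}^{-n_1})(\eta)$, and one applies the cited lemma to the shortened word $\gamma s_{i_1}^{-n_1}$ -- nontrivial because any nontrivial $\gamma \in \Gamma$ has word length $t \geq 2$ (its total exponent is divisible by $p$) -- whose first letter is $s_{i_2}^{n_2}$ with $\eta \notin \hat{\Lambda}_{(i_2)}$ by disjointness of the tubes; the stated bound then follows since $\delta(\eta, \hat{\Lambda}_{(i_1)}) = 0$ and $\delta(\eta, \hat{\Lambda}_{(i_2)}) \geq \delta(\hat{\Lambda}_{(i_1)}, \hat{\Lambda}_{(i_2)})$. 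If you want to salvage your write-up, replace the axis-level rotation argument by this reduction (or redo your single-step assertion at the boundary point of $\hat{\Lambda}_{(i)}$, where the $\langle s_i \rangle$-action on outgoing directions genuinely is free in every residue characteristic).
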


\begin{proof}

If we have $v \notin \hat{\Lambda}_{i_1}$, then the hypothesis of \cite[Lemma 3.16]{yelton2024branch} applies, and that result gives us the desired conclusion.  If instead we have $v \in \hat{\Lambda}_{i_1}$, then the element $\gamma s_{t_1}^{-n_1} \in \Gamma_0$ is nontrivial, and the hypothesis of \cite[Lemma 3.16]{yelton2024branch} applies when replacing $\gamma$ with $\gamma s_{t_1}^{-n_1}$.  Keeping in mind that $\delta(v, \hat{\Lambda}_{i_1}) = 0$ in this case, this again gives us the desired conclusion.
\end{proof}

\begin{lemma} \label{lemma distance gamma(b) a}

Let $S \subset \proj_K^1$ be an optimal subset; let $a \in \proj_K^1$ be any point such that the closest point in $\Sigma_S$ to $\eta_a$ is not a distinguished vertex; and let $b \in \proj_K^1$ be any point.  Then there exists $M \in \qq$ such that we have $v(\gamma(b) - a) > M$ for at most one element $\gamma \in \Gamma$.

\end{lemma}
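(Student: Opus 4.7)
Let $\xi_a \in \Sigma_S$ denote the closest point to $\eta_a$, and set $c := d(D_{\xi_a}) \in \qq$, where $D_{\xi_a}$ is the disc identified with $\xi_a$. The plan is to show that every nontrivial $\gamma \in \Gamma$ satisfies $v(\gamma(b) - a) \leq c$; from this, choosing $M \in \qq$ strictly greater than $\max\{c, v(b - a)\}$ (with the convention $v(b - a) = +\infty$ when $b = a$) will ensure that at most the identity element of $\Gamma$ (and only in the case $b = a$) can satisfy $v(\gamma(b) - a) > M$.

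By \Cref{prop valuations}(c), the inequality $v(\gamma(b) - a) > c$ is equivalent to the point $\eta_a \vee \eta_{\gamma(b)}$ corresponding to a disc strictly smaller than $D_{\xi_a}$, hence lying strictly between $\eta_a$ and $\xi_a$ on the path $[\eta_a, \xi_a]$. In particular, the closest point of $\Sigma_S$ to $\eta_{\gamma(b)}$ must equal $\xi_a$, and the path from $\eta_{\gamma(b)}$ enters $\Sigma_S$ at $\xi_a$ in the (unique) tangent direction along which the path from $\eta_a$ exits $\Sigma_S$; call this tangent direction the \emph{$\eta_a$-direction} at $\xi_a$. Since $\xi_a$ is not a distinguished vertex and the $\hat{\Lambda}_{(i)}$ are pairwise disjoint by \Cref{prop clustered in pairs}, there is at most one index $i_0$ with $\xi_a \in \hat{\Lambda}_{(i_0)}$. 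Moreover, in each geometric configuration compatible with the hypothesis (interior of a single axis $\Lambda_{(i_0)}$, natural vertex off all axes, or interior of a connecting segment off all axes), the $\eta_a$-direction at $\xi_a$ is distinct from every tangent direction along which $\Sigma_S$ extends from $\xi_a$.

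For a nontrivial $\gamma \in \Gamma$ with reduced expression $s_{i_t}^{n_t}\cdots s_{i_1}^{n_1}$ (necessarily of length $t \geq 2$ since $\Gamma$ consists of words of total exponent divisible by $p$), I apply \Cref{lemma off the convex hull variant} to a point of $\Sigma_S$ approximating $\eta_b$ (for instance, its closest point in $\Sigma_S$), which places the closest point of $\Sigma_S$ to $\gamma(\eta_b)$ inside $\hat{\Lambda}_{(i_t)}$. If $\xi_a$ lies in no $\hat{\Lambda}_{(i)}$ at all, this already contradicts the equality ``closest-point-to-$\eta_{\gamma(b)}$ equals $\xi_a$'' forced by the previous paragraph, so no $\gamma$ can satisfy the inequality; otherwise $i_t$ is forced to equal $i_0$. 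Writing $\gamma = s_{i_0}^{n_t}\gamma'$ with $\gamma' = s_{i_{t-1}}^{n_{t-1}}\cdots s_{i_1}^{n_1}$ of first-letter index $i_{t-1} \neq i_0$, \Cref{lemma off the convex hull variant} places the closest point of $\Sigma_S$ to $\gamma'(\eta_b)$ in $\hat{\Lambda}_{(i_{t-1})}$, which is disjoint from $\hat{\Lambda}_{(i_0)} \ni \xi_a$. Using that $s_{i_0}^{n_t}$ is an isometry of $\Berk$ that fixes $\Lambda_{(i_0)}$ pointwise and permutes, at each axis point, the non-axis tangent directions as an order-$p$ rotation with the two axis directions as fixed points, I track the tangent direction at $\xi_a$ along which the path to $\gamma(\eta_b) = s_{i_0}^{n_t}(\gamma'(\eta_b))$ departs and conclude that this direction still belongs to the set of directions in which $\Sigma_S$ extends from $\xi_a$, and so is distinct from the $\eta_a$-direction. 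This yields $\eta_a \vee \eta_{\gamma(b)} = \xi_a$, hence $v(\gamma(b) - a) \leq c$, completing the argument.

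The main obstacle I anticipate is the concluding tangent-direction analysis in the sub-case where $\xi_a$ lies in $\hat{\Lambda}_{(i_0)}$ strictly off the axis $\Lambda_{(i_0)}$: there $\xi_a$ is not itself fixed by $s_{i_0}^{n_t}$, so the comparison of tangent directions must first be carried out at the closest axis point $\xi_a^* \in \Lambda_{(i_0)}$ (which is fixed by $s_{i_0}^{n_t}$) and then transferred back to $\xi_a$ along the geodesic $[\xi_a, \xi_a^*] \subset \Sigma_S$; a related technical point, that \Cref{lemma off the convex hull variant} is stated for points of $\Sigma_S$ while $\eta_b$ is of Type I and possibly outside $\Sigma_S$, will be handled by applying the lemma to the closest point of $\Sigma_S$ to $\eta_b$ and then transporting the conclusion to $\gamma(\eta_b)$ using that $\gamma$ acts isometrically on $\Berk$.
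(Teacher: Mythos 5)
There is a genuine gap here, and in fact the strategy is aimed at a statement that is false. Your intermediate claim --- that \emph{every} nontrivial $\gamma \in \Gamma$ satisfies $v(\gamma(b) - a) \leq c$, so that only the identity can be the exceptional element --- cannot hold: the lemma allows $b$ to be an arbitrary point of $\proj_K^1$, and taking $b = \gamma_1^{-1}(a)$ for any nontrivial $\gamma_1 \in \Gamma$ gives $v(\gamma_1(b) - a) = \infty > c$; more generally, any $b$ having a nontrivial $\Gamma$-translate deep inside the disc $D_a(c)$ violates the claim. The correct assertion (and the one the paper proves) is that at most one element of the \emph{orbit} $\Gamma(b)$ --- not necessarily $b$ itself --- can lie close to $a$, so the single exceptional $\gamma$ need not be the identity. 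Any argument that singles out $\gamma = 1$ is therefore doomed from the start.

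The step where this enters your argument is the ``transport'' of \Cref{lemma off the convex hull variant} from $\xi_b$ (the closest point of $\Sigma_S$ to $\eta_b$) to $\eta_b$ itself. That lemma controls the closest point of $\Sigma_S$ to $\gamma(v)$ only for $v \in \Sigma_S$; knowing where $\gamma(\xi_b)$ sits does \emph{not} imply that the closest point of $\Sigma_S$ to $\gamma(\eta_b)$ lies in $\hat{\Lambda}_{(i_t)}$, because the path $[\gamma(\eta_b), \gamma(\xi_b)]$ may re-enter $\Sigma_S$ before reaching $\gamma(\xi_b)$, and then the entry point (which is the true closest point to $\gamma(\eta_b)$) can be anywhere --- in the counterexample above it is exactly $\xi_a$, and the departure direction at $\xi_a$ is exactly the $\eta_a$-direction, so no contradiction is available and the subsequent tangent-direction analysis cannot rescue the argument. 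Ruling out this re-entry is precisely the delicate point: the paper's proof shows (by applying \Cref{lemma off the convex hull variant} to $\gamma^{-1}$ and a hypothetical intersection point of $[\gamma(\eta_{\gamma_0(b)}), \gamma(\xi)]$ with $\Sigma_S$) that re-entry is impossible only when the closest point of $\Sigma_S$ to the translate in question is \emph{not} a distinguished vertex, and deduces from this only that at most one element of the orbit has a non-distinguished closest point. For every other $\gamma$, since the closest point to $\eta_a$ is not distinguished, the axis $\Lambda_{\gamma(b), a}$ meets $\Sigma_S$, and $v(\gamma(b) - a)$ is bounded via \Cref{prop valuations}(c) by $v(a)$ plus the maximum of $\delta(\xi_a, \Lambda_{0, \infty})$ and the distances from the finitely many distinguished vertices to $\Lambda_{0, \infty}$; this yields an $M$ exceeded by at most one $\gamma$, which is the actual content of the lemma. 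You would need to restructure your proof along these lines rather than attempt a uniform bound over all nontrivial $\gamma$.
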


\begin{proof}

We first set out to show that there is at most one element $\gamma \in \Gamma$ such that the closest point in $\Sigma_S$ to $\eta_{\gamma(b)}$ is not a distinguished vertex.  Suppose that there exists an element $\gamma_0 \in \Gamma$ such that the closest point $\xi$ in $\Sigma_S$ to $\eta_{\gamma_0(b)}$ is not a distinguished vertex.  Choose a nontrivial element $\gamma \in \Gamma$ and assume that the path $[\eta_{\gamma\gamma_0(b)} = \gamma(\eta_{\gamma_0(b)}), \gamma(\xi)] \subset \Berk$ intersects $\Sigma_S$ at a point $\xi' \in \Sigma_S$.  \Cref{lemma off the convex hull variant} implies that $\xi' \neq \gamma(\xi)$, so $\xi'$ lies in the interior of the path $[\gamma(\eta_{\gamma_0(b)}), \gamma(\xi)]$.  Applying the action of $\gamma^{-1}$ shows us that $\gamma^{-1}(\xi')$ lies in the interior of the path $[\eta_{\gamma_0(b)}, \xi]$.  By \Cref{lemma off the convex hull variant}, the closest point in $\Sigma_S$ to $\gamma^{-1}(\xi')$ is a distinguished vertex, which contradicts the fact that the closest point in $\Sigma_S$ to $\eta_{\gamma_0(b)}$ (and thus also to $\gamma^{-1}(\xi')$) is not a distinguished vertex.  From this contradiction we get $[\eta_{\gamma \gamma_0(b)}, \gamma(\xi)] \cap \Sigma_S = \varnothing$.  By \Cref{lemma off the convex hull variant}, the closest point in $\Sigma_S$ to $\gamma(\xi)$ is a distinguished vertex; it follows that the closest point in $\Sigma_S$ to $\eta_{\gamma \gamma_0(b)}$ is a distinguished vertex.  Since $\gamma$ was chosen arbitrarily, we have proved our claim.

We now consider the possible values of $v(\gamma(b) - a)$ over all elements $\gamma \in \Gamma$.  After discarding at most one choice of $\gamma$, we may assume that the closest point in $\Sigma_S$ to $\eta_{\gamma(b)}$ is a distinguished vertex.  Since the closest point in $\Sigma_S$ to $\eta_a$ is not a distinguished vertex, we have $\Lambda_{\gamma(b), a} \cap \Sigma_S \neq \varnothing$ and that this intersection is the path $[\xi, \xi'_\gamma]$, where $\xi$ (resp. $\xi'_\gamma$) is the closest point in $\Sigma_S$ to $\eta_a$ and (resp. $\eta_{\gamma(b)}$).  In terms of the partial order established at the top of \S\ref{sec berk dictionary}, we get either $\eta_a \vee \eta_{\gamma(b)} \in \{\xi, \xi'_\gamma\}$ or $\eta_a \vee \eta_{\gamma(b)} > \xi, \xi'_\gamma$.  It follows that $\eta_{\gamma(b)} \vee \eta_a$ is the point $\xi \in \Sigma_S$, is a distinguished vertex of $\Sigma_{S, 0}$, or is greater than a distinguished vertex $v \in \Sigma_{S, 0}$ with respect to our partial order; the last condition implies that $\eta_a \vee \eta_{\gamma(b)} \in [v, \eta_\infty]$ and so we get $\delta(\eta_a \vee \eta_{\gamma(b)}, \Lambda_{0, \infty}) \leq \delta(v, \Lambda_{0, \infty})$.  Thus, the distance $\delta(\eta_a \vee \eta_{\gamma(b)}, \Lambda_{0, \infty})$ either equals $\delta(\xi, \Lambda_{0, \infty})$ or is at most the maximum distance between a distinguished vertex and the axis $\Lambda_{0, \infty}$.  Now applying \Cref{prop valuations}(c) yields the inequality 
\begin{equation}
v(\gamma(b) - a) \leq M := v(a) + \max(\{\delta(\xi, \Lambda_{0, \infty})\} \cup \{\delta(v, \Lambda_{0, \infty})\}_v),
\end{equation}
where the maximum is taken over all distinguished vertices $v$ (of which there are only finitely many).  Since our choice of $\gamma \in \Gamma$ was arbitrary apart from possibly discarding one element, the assertion of the lemma follows.
\end{proof}

\begin{cor} \label{cor S in Omega}

Let $S \subset \proj_K^1$ be an optimal subset, and let $a \in \proj_K^1$ be an element satisfying that the closest point in $\Sigma_S$ to $\eta_a$ is not a distinguished vertex.  Then we have $a \in \Omega$.  In particular, we have $S \subset \Omega$.

\end{cor}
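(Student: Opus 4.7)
My plan is to prove the main assertion by contradiction, using \Cref{lemma distance gamma(b) a} as the essential input. Suppose that $a \notin \Omega$, so that $a$ is a limit point of $\Gamma$. The strategy is to exhibit a point $b \in \proj_K^1$ together with an infinite sequence of distinct elements $\gamma_n \in \Gamma$ satisfying $v(\gamma_n(b) - a) \to \infty$; this directly contradicts the conclusion of \Cref{lemma distance gamma(b) a}, which asserts the existence of a bound $M \in \qq$ such that $v(\gamma(b) - a) > M$ can hold for at most one $\gamma \in \Gamma$.

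To produce such a $b$ and such a sequence, I would first choose $b \in \proj_K^1$ not fixed by any nontrivial element of $\Gamma$; this is possible because $K$ is infinite (it carries a nontrivial discrete valuation) while the set of fixed points of nontrivial elements of $\Gamma$ is countable (each element has at most two fixed points in $\proj_{\cc_K}^1$, and $\Gamma$ is finitely generated, hence countable). I would then invoke the standard fact from the dynamics of Schottky groups that the orbit $\Gamma \cdot b$ accumulates on the entire limit set of $\Gamma$: in the non-elementary case (rank of $\Gamma$ at least $2$) this follows from the minimality of the action of $\Gamma$ on its perfect limit set, while in the elementary case $(g,p) = (1,2)$ it is immediate since the powers of the single hyperbolic generator carry any non-fixed point toward both of its fixed points. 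Since $a$ lies in the limit set of $\Gamma$ by assumption, we obtain the required distinct $\gamma_n \in \Gamma$ with $\gamma_n(b) \to a$, i.e.\ $v(\gamma_n(b) - a) \to \infty$. Combining this with \Cref{lemma distance gamma(b) a} produces the contradiction and forces $a \in \Omega$.

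Finally, for the assertion $S \subset \Omega$: given $z \in S$, the closest point in $\Sigma_S$ to $\eta_z$ is $\eta_z$ itself, which is a Type I endpoint of the axis $\Lambda_{(i)} \subset \Sigma_S$ containing $z$. Any sufficiently small Berkovich-topology neighborhood of $\eta_z$ in $\Sigma_S$ consists only of an initial arc of $\Lambda_{(i)}$ terminating at $\eta_z$, so is contained in $\Lambda_{(i)}$; hence $\eta_z$ is not a distinguished vertex, and the main part of the corollary, applied with $a = z$, delivers $z \in \Omega$. The principal obstacle I anticipate is invoking the orbit-accumulation fact cleanly; I expect to dispatch it by a brief appeal to the standard Schottky-group dynamics literature (e.g.\ \cite{gerritzen2006schottky}) rather than a from-scratch argument.
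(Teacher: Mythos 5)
Your proof is correct and follows essentially the same route as the paper: assume $a$ is a limit point and derive a contradiction with \Cref{lemma distance gamma(b) a}, then observe that for $z \in S$ the point $\eta_z$ is not a distinguished vertex, so the main statement applies. The only difference is that your middle step (choosing a $b$ with trivial stabilizer and invoking minimality/orbit accumulation of $\Gamma$ on its limit set) is superfluous: the definition of a limit point already furnishes some $b \in \proj_K^1$ and a sequence of distinct elements $\gamma_n \in \Gamma$ with $v(\gamma_n(b) - a) \to \infty$, which is all that is needed to contradict the lemma, and this is exactly how the paper argues.
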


\begin{proof}

Suppose that a point $a$ satisfying this hypothesis is a limit point.  Then for some $b \in \proj_K^1$ and some subset $\{\gamma_n\}_{n \geq 1} \subset \Gamma$, we have $\lim_{n \to \infty} \gamma_n(b) = a$, or equivalently, $\lim_{n \to \infty} v(\gamma_n(b) - a) = \infty$.  But this is contradicted by \Cref{lemma distance gamma(b) a}.
\end{proof}

\begin{rmk} \label{rmk S in Omega}

As is explained in \S\ref{sec intro previous results}, any $p$-superelliptic subset $S \subset \proj_K^1$ can be ``folded into'' an optimal set $\Smin$ without affecting the associated $p$-Whittaker group $\Gamma_0$; modifying $S$ via a folding amounts to acting on each element of $S$ by some automorphism in $\Gamma_0$.  Since the set of non-limit points $\Omega$ of $\Gamma_0$ is invariant under the action of $\Gamma_0$, the second statement of \Cref{cor S in Omega}, which says that $\Smin \subset \Omega$, implies that we have $S \subset \Omega$ as well.  This crucial fact about fixed points of generators of $p$-Whittaker groups is proved (using good fundamental domains) in the $p = 2$ case as \cite[Lemma 2.3]{van1983non} and is mentioned at the top of \cite[\S3]{van1982galois} for general $p$; in the latter reference, the author van Steen refers to his thesis for a proof.  Our above argument appears to be more or less independent of van Steen's, and in our context it comes as part of a more general statement which is useful to us in its own right.

\end{rmk}

\begin{lemma} \label{lemma N_gamma}

Let $S \subset \proj_K^1$ be an optimal subset.  For each $\gamma \in \Gamma$ and $0 \leq i \leq g$, write $c_i^\gamma = [\gamma(a_i)]^{-1} \gamma(b_i) - 1$.  Assume that for some index $j$, we have $a_j = 0$ and $b_j = \infty$.

\begin{enumerate}[(a)]

\item For any index $i$ and any $M > 0$ there are only finitely many elements $\gamma \in \Gamma$ such that $v(c_i^\gamma) \leq M$.

\item For any index $i \neq j$, we have 
\begin{equation} \label{eq v(N_gamma)}
v(c_i^\gamma) \geq v(a_i^{-1}b_i - 1) \text{ for all }\gamma \in \Gamma.
\end{equation}
Moreover, under the additional assumption that we have $[\eta_{a_i} \vee \eta_{b_i}, \mathfrak{v}_j] \cap \hat{\Lambda}_{(l)} = \varnothing$ for indices $l \neq i, j$, equality occurs in (\ref{eq v(N_gamma)}) if and only if we have $\gamma \in \langle s_j \rangle$ (that is, only when $\gamma$ acts as $z \mapsto \zeta_p^n z$ for some $n \in \zz$).

\end{enumerate}

\end{lemma}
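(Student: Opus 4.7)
My starting point for both parts is the formula $c_i^\gamma = (\gamma(b_i) - \gamma(a_i))/\gamma(a_i)$; applying \Cref{prop valuations}(c) together with the identification $\Lambda_{(j)} = \Lambda_{0,\infty}$ (from $a_j = 0$, $b_j = \infty$), I would derive
\[ v(c_i^\gamma) = \delta(\gamma(\Lambda_{(i)}), \Lambda_{(j)}) + \min(0,\, v(\gamma(b_i)) - v(\gamma(a_i))), \]
whose correction term vanishes precisely when the axes $\gamma(\Lambda_{(i)})$ and $\Lambda_{(j)}$ are disjoint (equivalently, when $v(\gamma(a_i)) \le v(\gamma(b_i))$). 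For part~(a), I would then invoke \Cref{prop clustered in pairs} to know the neighborhoods $\hat\Lambda_{(l)}$ are pairwise disjoint with some uniform positive lower bound $d>0$ on their mutual distances, and apply \Cref{lemma off the convex hull variant} to $\eta = \eta_{a_i}\vee\eta_{b_i}\in\Sigma_S$: for $\gamma$ of reduced word length $t$, this gives $\delta(\gamma(\eta),\Sigma_S) \geq (t-1)d$, which forces $\delta(\gamma(\Lambda_{(i)}),\Lambda_{(j)})\to\infty$ with $t$ (since $\Lambda_{(j)}\subset\Sigma_S$). For $\gamma$ outside some finite set, the axes become disjoint, the correction vanishes, and $v(c_i^\gamma)\to\infty$, yielding the finiteness assertion.

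For part~(b), the identity $v(c_i^1) = \delta_0 := \delta(\Lambda_{(i)},\Lambda_{(j)})$ will follow from the disjointness of $\Lambda_{(i)}, \Lambda_{(j)}$ (clustered-in-pairs). For general $\gamma\in\Gamma$, I would use isometry to rewrite $\delta(\gamma(\Lambda_{(i)}),\Lambda_{(j)}) = \delta(\Lambda_{(i)},\gamma^{-1}(\Lambda_{(j)}))$, reducing the question to bounding the distance from $\Lambda_{(i)}$ to elements of the $\Gamma_0$-orbit of $\Lambda_{(j)}$. Noting that the stabilizer of $\Lambda_{(j)}$ in $\Gamma_0$ is $\langle s_j\rangle$ (since $\Lambda_{(j)}$ is the axis of the order-$p$ element $s_j$), and that $\langle s_j\rangle\cap\Gamma = \{1\}$ (because $s_j$ has total exponent $1$, not divisible by $p$), any $\gamma\neq 1$ in $\Gamma$ yields a translate $\gamma^{-1}(\Lambda_{(j)})\neq\Lambda_{(j)}$. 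I would then argue by induction on the reduced word length of $\gamma$, exploiting that each generator $s_k$ fixes $\hat\Lambda_{(k)}$ pointwise and cyclically permutes the $p$ components of its complement, to conclude that any such translate lies at distance at least $\delta_0$ from $\Lambda_{(i)}$ and that the two axes remain disjoint (forcing the correction term to vanish), giving $v(c_i^\gamma) \ge \delta_0$.

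The hard part will be establishing the precise equality characterization under the additional hypothesis $[\eta_{a_i}\vee\eta_{b_i},\mathfrak{v}_j]\cap\hat\Lambda_{(l)} = \varnothing$ for $l\neq i,j$. This hypothesis guarantees that the geodesic from $\Lambda_{(i)}$ to $\Lambda_{(j)}$ avoids the other tubular neighborhoods; consequently, any occurrence of a generator $s_l$ (with $l\neq i,j$) in the reduced word for $\gamma$ should force the geodesic from $\Lambda_{(i)}$ to $\gamma^{-1}(\Lambda_{(j)})$ to detour through $\hat\Lambda_{(l)}$, contributing strictly positive extra length. The generators $s_i$ and $s_j$ individually preserve the relevant distance, but ruling out nontrivial combinations of them within $\Gamma$ requires a careful ping-pong-style tracking argument in the Berkovich tree—keeping track of which sectors around each $\hat\Lambda_{(k)}$ the iterated images of $\Lambda_{(j)}$ enter—ultimately concluding that equality forces $\gamma\in\langle s_j\rangle\cap\Gamma = \{1\}$, as claimed.
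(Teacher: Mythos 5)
Your part (a) is essentially the paper's own argument: convert $v(c_i^\gamma)$ into a distance between $\gamma(\Lambda_{(i)})$ and $\Lambda_{(j)}$ via \Cref{prop valuations} and bound that distance below, linearly in the reduced word length, using \Cref{lemma off the convex hull variant} together with the uniform positive separation of the $\hat{\Lambda}_{(l)}$'s coming from \Cref{prop clustered in pairs}. One small repair is needed: $\gamma$ does not fix $\infty$, so it does not commute with $\vee$, and $\gamma(\eta_{a_i}\vee\eta_{b_i})$ need not equal $\eta_{\gamma(a_i)}\vee\eta_{\gamma(b_i)}$; you should apply the lemma to the point $v\in\Lambda_{(i)}$ with $\gamma(v)=\eta_{\gamma(a_i)}\vee\eta_{\gamma(b_i)}$ (or to every point of $\Lambda_{(i)}$, since the lemma's lower bound is uniform over $\Sigma_S$), exactly as the paper does.

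The genuine gap is in part (b), and it is not only that the ``hard part'' is deferred: the statement your ping-pong argument is aimed at is false. The quantity $c_i^\gamma$ is invariant under $\gamma\mapsto s_j^n\gamma s_i^{-n}$, because $s_i$ fixes $a_i,b_i$ and $s_j$ acts as multiplication by $\zeta_p^n$; in particular the nontrivial elements $s_j^n s_i^{-n}\in\Gamma$ (total exponent $0$, reduced length $2$) satisfy $c_i^{s_j^n s_i^{-n}}=c_i^1$ and hence achieve equality in (\ref{eq v(N_gamma)}). Equivalently, in your geometric reformulation, $\gamma^{-1}(\Lambda_{(j)})=s_i^n(\Lambda_{(j)})$ lies at distance exactly $\delta_0$ from $\Lambda_{(i)}$ for these $\gamma\neq 1$. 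So no sector-tracking argument can conclude that ``equality forces $\gamma\in\langle s_j\rangle\cap\Gamma=\{1\}$''; this is exactly what the parenthetical ``acts as $z\mapsto\zeta_p^n z$'' in the statement is accommodating. The paper's proof deals with this at the outset by writing $\Gamma=\bigsqcup_{n=0}^{p-1}s_j^nG_js_i^{-n}$, where $G_j$ consists of the words whose leftmost letter is not a power of $s_j$, using the invariance above to reduce both parts to $\gamma\in G_j$, and only then proving that equality forces $\gamma=1$ via the chain of distance estimates from \Cref{lemma off the convex hull variant} combined with the hypothesis $[\eta_{a_i}\vee\eta_{b_i},\mathfrak{v}_j]\cap\hat{\Lambda}_{(l)}=\varnothing$. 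Your proposal has no analogue of this reduction, and without it the equality characterization does not go through; a further (more minor) inaccuracy is the claim that each $s_k$ ``cyclically permutes the $p$ components'' of the complement of $\hat{\Lambda}_{(k)}$ — that complement has infinitely many components — so the inductive/ping-pong step for the inequality itself would also need to be reformulated before it could be made rigorous.
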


\begin{proof}

Let $G_j \subset \Gamma$ be the subset of elements which, when written as a word as in (\ref{eq word}), satisfy $i_t \neq j$ (interpreting this definition so that $1 \in G_j$).  It is elementary to check that $\Gamma$ can be written as the disjoint union $\bigsqcup_{n = 0}^{p - 1} s_j^n G_j s_i^{-n}$.  Using the fact that $s_i$ fixes the points $a_i, b_i \in K$, we compute 
\begin{equation}
c_i^{s_j^n \gamma s_i^{-n}} = (\zeta_p^n \gamma(a_i))^{-1} (\zeta_p^n \gamma(b_i)) - 1 = [\gamma(a_i)]^{-1} \gamma(b_i) - 1 = c_i^\gamma.
\end{equation}
It therefore suffices to prove that the statements of parts (a) and (b) hold for $\gamma \in G_j$.  We will show for such $\gamma$ that, letting $t(\gamma)$ denote the \emph{length} of $\gamma$ as a word on the generators $s_l$ of $\Gamma_0$ (that is, $t(\gamma)$ is the natural number $t$ appearing in the expression for $\gamma$ in (\ref{eq word})), we have 
\begin{equation} \label{eq c_i^gamma in terms of t(gamma)}
v(c_i^\gamma) \geq t(\gamma) \min\{\delta(\hat{\Lambda}_{(l)}, \hat{\Lambda}_{(m)})\}_{l \neq m}.
\end{equation}
Since the set $\{\delta(\hat{\Lambda}_{(l)}, \hat{\Lambda}_{(m)})\}_{l \neq m}$ is finite and consists of positive numbers, the inequality (\ref{eq c_i^gamma in terms of t(gamma)}) immediately implies part (a).

We have $c_i^\gamma = a_i^{-1}b_i - 1$ when $\gamma = 1$, which immediately verifies both the inequality in (\ref{eq c_i^gamma in terms of t(gamma)}) and the statement of part (b) in this case, so we choose $\gamma \in G_j \smallsetminus \{1\}$ and proceed to show that both inequalities (\ref{eq c_i^gamma in terms of t(gamma)}) and $v(c_i^\gamma) \geq v(\lambda)$ hold, and that the latter inequality is strict if we have $[\eta_{a_i} \vee \eta_{b_i}, \mathfrak{v}_j] \cap \hat{\Lambda}_{(l)} = \varnothing$ for indices $l \neq i, j$.  Below we will write $t$ for $t(\gamma)$.

To this end, we first note using \Cref{prop valuations}(c) that we have 
\begin{equation} \label{eq first formula for N_i^gamma}
v(c_i^\gamma) = v(\gamma(b_i) - \gamma(a_i)) - v(\gamma(a_i)) = \delta(\eta_{\gamma(a_i)} \vee \eta_{\gamma(b_i)}, \Lambda_{(j)}).
\end{equation}
Now as the fractional linear transformation $\gamma$ acts on $\Berk$ as a self-homeomorphism and sends the endpoints $\eta_{a_i}, \eta_{b_i}$ of the axis $\Lambda_{(i)}$ to the points $\eta_{\gamma(a_i)}, \eta_{\gamma(b_i)}$ respectively, we have $\gamma(\Lambda_{(i)}) = \Lambda_{\gamma(a_i), \gamma(b_i)} \ni \eta_{\gamma(a_i)} \vee \eta_{\gamma(b_i)}$, from which it follows that there is a point $v \in \Lambda_{(i)} \subset \Sigma_S$ such that $\gamma(v) = \eta_{\gamma(a_i)} \vee \eta_{\gamma(b_i)}$.  Now we apply \Cref{lemma off the convex hull variant} to get 
\begin{equation} \label{eq distance from gamma(v) to far axis}
\delta(\gamma(v), \hat{\Lambda}_{(i_t)}) = \delta(v, \hat{\Lambda}_{(i_1)}) + \sum_{l = 2}^t \delta(\hat{\Lambda}_{(i_{l-1})}, \hat{\Lambda}_{(i_l)}) \geq \delta(v, \hat{\Lambda}_{(i_t)}).
\end{equation}
In fact, we may now estimate $v(c_i^\gamma)$ by computing 
\begin{equation} \label{eq estimating v(N_gamma)}
\begin{aligned}
v(c_i^\gamma) = \delta(\gamma(v), \Lambda_{(j)}) &\text{\indent by (\ref{eq first formula for N_i^gamma})} \\
\geq \delta(\gamma(v), \hat{\Lambda}_{(i_t)}) + \delta(\hat{\Lambda}_{(i_t)}, \Lambda_{(j)}) & \text{\indent using \Cref{lemma off the convex hull variant}} \\
\geq \delta(v, \hat{\Lambda}_{(i_t)}) + \delta(\hat{\Lambda}_{(i_t)}, \Lambda_{(j)}) &\text{\indent by (\ref{eq distance from gamma(v) to far axis})} \\
\geq \delta(v, \Lambda_{(j)}) &\text{\indent (strict if $[v, \mathfrak{v}_j] \cap \hat{\Lambda}_{(i_t)} = \varnothing$)} \\
\geq \delta(\Lambda_{(i)}, \Lambda_{(j)}) &\text{\indent because $v \in \Lambda_{(i)}$} \\
= v(b_i - a_i) - v(a_i) = v(a_i^{-1}b_i - 1) &\text{\indent by \Cref{prop valuations}(b)(c),}
\end{aligned}
\end{equation}
where the particular consequence of \Cref{lemma off the convex hull variant} used in the first inequality of (\ref{eq estimating v(N_gamma)} is its assertion that the closest point in $\Sigma_S$ to $\gamma(\mathfrak{v}_i)$ lies in $\hat{\Lambda}_{(i_t)}$.  This directly provides the desired inequality $v(c_i^\gamma) \geq v(a_i^{-1}b_i - 1)$; we see from (\ref{eq estimating v(N_gamma)}) that it is strict if we have $[v, \mathfrak{v}_j] \cap \hat{\Lambda}_{(i_t)} = \varnothing$.  As we have $[v, \mathfrak{v}_j] = [v, \eta_{a_i} \vee \eta_{b_i}] \cup [\eta_{a_i} \vee \eta_{b_i}, \mathfrak{v}_j]$ and $[v, \eta_{a_i} \vee \eta_{b_i}] \subset \Lambda_{(i)}$ is disjoint from $\Lambda_{(j)}$ by the property of being clustered in pairs, the condition that implies strictness is equivalent to the condition that $[\eta_{a_i} \vee \eta_{b_i}, \mathfrak{v}_j] \cap \Lambda_{(i_t)} = \varnothing$.

\enlargethispage{2\baselineskip}

Meanwhile, the sequence of inequalities in (\ref{eq estimating v(N_gamma)}) also includes $v(c_i^\gamma) \geq \delta(\gamma(\mathfrak{v}_i), \hat{\Lambda}_{(i_t)}) + \delta(\hat{\Lambda}_{(i_t)}, \Lambda_{(j)})$ which, using (\ref{eq distance from gamma(v) to far axis}), allows us to get the desired inequality (\ref{eq c_i^gamma in terms of t(gamma)}) by computing 
\begin{equation}
\begin{aligned}
v(c_i^\gamma) \geq \delta(\gamma(v), \hat{\Lambda}_{(i_t)}) + \delta(\hat{\Lambda}_{(i_t)}, \Lambda_{(j)}) \geq \sum_{l = 2}^t \delta(\hat{\Lambda}_{(i_{l-1})}, \hat{\Lambda}_{(i_l)}) + &\delta(\hat{\Lambda}_{(i_t)}, \hat{\Lambda}_{(j)}) \\
&\geq t(\gamma) \min\{\delta(\hat{\Lambda}_{(l)}, \hat{\Lambda}_{(m)})\}_{l \neq m}.
\end{aligned}
\end{equation}
\end{proof}

\subsection{Theta functions} \label{sec proof of main theta}

The $K$-analytic isomorphism between the quotient of the set of non-limit points $\Omega$ modulo the action of a $p$-Whittaker group $\Gamma_0$ and the projective line $\proj_K^1$ is made explicit by a special type of infinite product function.  We define (as in \cite[\S II.2]{gerritzen2006schottky}, with slightly different notation) the \emph{theta function} $\Theta_{a, b}^G$ with respect to any subgroup $G < \PGL_2(K)$ and any choice of elements $a, b$ of its subset $\Omega_G \subset \proj_K^1$ of non-limit points as 
\begin{equation*}
\Theta_{a, b}^G(z) = \prod_{\gamma \in G} \frac{z - \gamma(a)}{z - \gamma(b)}.
\end{equation*}
Here and below, we adopt the convention that if exactly one of the terms in the numerator (resp. denominator) is $\infty$, then the numerator (resp. denominator) is replaced by $1$ and that if the denominator comes out to $0$, then the infinite product equals $\infty \in \proj_K^1$.  We will always assume for our purposes that we have $b \notin G(a)$ and $\infty \notin G(a) \cup G(b)$ (\textit{i.e.} that $a$ and $b$ are in separate orbits under the action of $G$ and that neither is in the orbit of $\infty$).  It is immediate to see that the set of zeros (resp. poles) of $\Theta^G_{a, b}$ coincides with $\{\gamma(a_i)\}_{\gamma \in G}$ (resp. $\{\gamma(b_i)\}_{\gamma \in G}$); in particular, these theta functions are not constant.

In our situation, we are concerned with the theta functions $\Theta_{a, b}^\Gamma$ and $\Theta_{a, b}^{\Gamma_0}$ corresponding to our Schottky and $p$-Whittaker groups $\Gamma \lhd \Gamma_0 < \PGL_2(K)$, where $a, b \in \Omega := \Omega_\Gamma = \Omega_{\Gamma_0}$ satisfy $b \notin \Gamma_0(a)$ and $\infty \notin \Gamma_0(a) \cup \Gamma_0(b)$.  It is shown in \cite[\S II.2, IX.2]{gerritzen2006schottky} that for such $a, b$, the functions $\Theta_{a, b}^\Gamma$ and $\Theta_{a, b}^{\Gamma_0}$ are meromorphic on $\Omega$; moreover, a primarily group-theoretic argument in \cite[\S VIII.1]{gerritzen2006schottky} demonstrates that $\Theta_{a, b}^{\Gamma_0}$ is invariant under the action of $\Gamma_0$, \textit{i.e.} we have $\Theta_{a, b}^{\Gamma_0}(\gamma(z)) = \Theta_{a, b}^{\Gamma_0}(z)$ for $\gamma \in \Gamma_0$.  These above results imply that $\Theta_{a, b}^{\Gamma_0} : \Omega \to \proj_K^1$ induces a map $\vartheta_{a, b} : \Omega / \Gamma_0 \to \proj_K^1$; as the only poles of $\Theta_{a, b}^{\Gamma_0}$ are simple poles occuring at the elements of $\Gamma_0(b)$, the induced function $\vartheta_{a, b}$ has exactly $1$ simple pole and so it is an isomorphism.

For our purposes, the elements $a, b \in \Omega$ will be chosen to be $a_i, b_i \in S$ for some index $i \in \{0, \dots, g\}$, where $S$ is an optimal set whose associated Schottky and $p$-Whittaker groups are $\Gamma \lhd \Gamma_0$.  This is a valid choice of $a, b$ for the following reasons.  We know that $S \subset \Omega$ from \Cref{cor S in Omega}.  We know moreover that $s_i(a_i) = a_i \neq b_i$ and that $\eta_{\gamma(a_i)} = \gamma(\eta_{a_i}) = \notin \Sigma_S \ni \eta_{b_i}$ for any $\gamma \in \Gamma \smallsetminus \{1\}$ thanks to \Cref{lemma off the convex hull variant}.  Since the group $\Gamma_0$ is generated by its subgroup $\Gamma$ and the element $s_i$, we get $b_i \notin \Gamma_0(a_i)$.

From the decomposition $\Gamma_0 = \bigsqcup_{0 \leq n \leq p - 1} \Gamma s_i^n$ and the fact that the element $s_i \in \Gamma_0$ fixes $a_i, b_i$, one sees immediately from formulas that we have 
\begin{equation} \label{eq Theta^p}
\Theta_{a_i, b_i}^{\Gamma_0} = (\Theta_{a_i, b_i}^\Gamma)^p.
\end{equation}

To state and demonstrate the results in the rest of this section, we introduce the following notation.  Given a point $a \in \cc_K$ and a real number $r$, we denote the disc $\{z \in \cc_K \ | \ v(z - a) \geq r\}$ by $D_a(r)$ and the corresponding point of $\Berk$ by $\eta_a(r)$.  Given another real number $s > r$, we introduce the notation $A_a(r, s) \subset \cc_K$ for the \emph{open annulus} given by $\{z \in \cc_K \ | \ r < v(z - a) < s\}$; we will refer to $a$ as a \emph{center} of the annulus $A_a(r, s)$ even though $a \notin A_a(r, s)$.

There is a natural way to extend any rational function $\Theta$ on $\proj_{\cc_K}^1$ (viewed as the subspace of $\Berk$ consisting of the points of Type I) to a function $\Theta_*$ on $\Berk$ using the original seminorm definition of the points of $\Berk$: it is done by composing $\Theta$ with each seminorm; see \cite[Definition 7.2]{benedetto2019dynamics}.  It is clear from the construction and the proofs of the results given in \cite[\S7.1,7.2]{benedetto2019dynamics} that this can be generalized to the case where $\Theta$ is not necessarily a rational function but is meromorphic on a subspace $Z \subset \proj_{\cc_K}^1$ which satisfies that, for any center $a \in Z$ and any real number $r$, there is some $\varepsilon > 0$ such that the function $\Theta$ is meromorphic on each of the annuli $A_a(r, r + \varepsilon)$ and $A_a(r - \varepsilon, r)$; in this generalization, one expects the extended function $\Theta_*$ to be defined on the convex hull of $Z$ in $\Berk$.  We claim that this is the case when $\Theta$ is one of the theta functions $\Theta_{a, b}^\Gamma$ described in \S\ref{sec proof of main theta} above, given a $p$-superelliptic set $S$ with associated $p$-Whittaker group $\Gamma_0$ and elements $a, b \in \Omega = \Omega_\Gamma = \Omega_{\Gamma_0}$, and that the values of the induced function $\Theta_* = (\Theta_{a, b}^\Gamma)_*$ can be computed at any point $\eta_D$ in the convex hull of $\Omega$ in $\Berk$ via \cite[Theorem 7.12, Remark 7.14]{benedetto2019dynamics}.  This result requires in particular that $\Theta$ be meromorphic when restricted to a small enough annulus with inner or outer radius equal to the radius of $D$ and which is centered at a center of $D$, and it states that the image $\Theta(A)$ is also an annulus.

As we do not need to show that the function $\Theta_*$ defined on a certain subspace of $\Berk$ via the construction given in \cite[Theorem 7.12]{benedetto2019dynamics} is actually the map on seminorms induced by $\Theta$ in the sense of \cite[Definition 7.2]{benedetto2019dynamics} or that it is defined on the convex hull of $\Omega$, we leave out the details of such arguments.  However, in order to define $\Theta_*$ using \cite[Theorem 7.12, Remark 7.14]{benedetto2019dynamics} on the subspace $\Sigma_S \subset \Berk$, it is necessary and sufficient to establish the below facts.  (This will also show that the theta function $\tilde{\Theta} := \Theta_{a, b}^{\Gamma_0}$, being the composition of a polynomial function with $\Theta$ as in (\ref{eq Theta^p}), induces a map $\tilde{\Theta}_*$ on $\Sigma_S$, as this construction is functorial with respect to composition.)

\begin{prop} \label{prop meromorphic}

Assume the above set-up and notation, that the set $S$ is optimal, and that $\infty \in S$; choose $a, b \in \Omega$.  For each point $\eta_D \in \Sigma_S \smallsetminus \{\eta_{a_i}, \eta_{b_i}\}_{0 \leq i \leq g}$, the function $\Theta_{a, b}^\Gamma$ is meromorphic on an annulus of the form $A_c(d(D), d(D) + \varepsilon)$ or $A_c(d(D) - \varepsilon, d(D))$ for some $c \in D$ and $\varepsilon > 0$.

\end{prop}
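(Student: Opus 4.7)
My plan is to exploit the already-established fact (\cite[\S II.2]{gerritzen2006schottky}) that $\Theta_{a,b}^\Gamma$ is meromorphic on the whole of $\Omega$, and to reduce the proposition to exhibiting a classical annulus of one of the prescribed forms that is contained in $\Omega$ and attached to $\eta_D$.

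First I will choose the center $c \in D$ and the side of the annulus.  Since $\eta_D \in \Sigma_S \smallsetminus \{\eta_{a_i}, \eta_{b_i}\}_{0 \leq i \leq g}$ is of Type II or III, the disc $D$ is infinite, and the tangent directions at $\eta_D$ in $\Berk$ are the outward direction toward $\eta_\infty$ and (if $\eta_D$ is of Type II) one inward direction per residue class of $D$.  Since $\eta_D \in \Sigma_S$ and $\Sigma_S$ is a connected subspace of $\Berk$, at least one such tangent direction points along an edge of $\Sigma_S$; if this direction is outward, I take any $c \in D$ and use the annulus $A_c(d(D) - \varepsilon, d(D))$, whereas if it is inward, then $\eta_D$ is necessarily of Type II and I pick $c$ in the residue class of $D$ corresponding to that direction, using $A_c(d(D), d(D) + \varepsilon)$.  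In either case, the classical annulus corresponds, via the standard Berkovich dictionary, to a small one-sided open tubular neighborhood of $\eta_D$ in $\Berk$ along the chosen tangent direction.

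Next I will argue that for sufficiently small $\varepsilon > 0$ the chosen annulus is contained in $\Omega$, i.e.\ contains no limit points of $\Gamma$.  The key input is \Cref{lemma off the convex hull variant} applied to the point $\eta = \eta_D \in \Sigma_S$: for every nontrivial $\gamma \in \Gamma$, the distance $\delta(\gamma(\eta_D), \Sigma_S)$ is bounded below by a positive quantity which grows linearly in the word length $t(\gamma)$.  Since $\eta_D \in \Sigma_S$, the triangle inequality yields $\delta(\gamma(\eta_D), \eta_D) \to \infty$ as $t(\gamma) \to \infty$, so the orbit $\Gamma \cdot \eta_D \subset \Berk$ is discrete and escapes every bounded Berkovich neighborhood of $\eta_D$; its closure in $\Berk$ can then accumulate (at the level of Type I points) only at limit points of $\Gamma$ in $\proj_{\cc_K}^1$ whose Berkovich representatives lie outside a fixed-size Berkovich neighborhood of $\eta_D$ along the chosen tangent direction.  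For $\varepsilon$ small enough, the tubular neighborhood of $\eta_D$ associated to our annulus therefore contains no limit points, and meromorphy of $\Theta_{a,b}^\Gamma$ on that annulus follows from its meromorphy on $\Omega$.

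The principal obstacle is the translation between Berkovich tangent-direction language and the classical $v$-adic annulus language: one must verify that, for the chosen $c$ and tangent direction, the set of Type I points $z \in \cc_K$ whose path in $\Berk$ to $\eta_D$ enters via the good tangent direction is precisely the set satisfying the valuation inequalities defining one of $A_c(d(D) - \varepsilon, d(D))$ or $A_c(d(D), d(D) + \varepsilon)$, and that the absence of limit points in the Berkovich tubular neighborhood really does translate to the absence of limit points in the corresponding classical annulus.  Once this dictionary is in place, the combination of the orbit discreteness supplied by \Cref{lemma off the convex hull variant} with meromorphy on $\Omega$ completes the proof.
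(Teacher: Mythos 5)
Your overall reduction --- exhibit an annulus of the prescribed shape contained in $\Omega$ and then invoke meromorphy of $\Theta_{a,b}^\Gamma$ on $\Omega$ --- is exactly the paper's strategy (this is what \Cref{cor annuli in Omega}, combined with the discussion in \S\ref{sec proof of main theta}, accomplishes). However, the step where you argue that the annulus avoids the limit set has a genuine gap. The limit points of $\Gamma$ are Type I points, i.e.\ limits $\lim_n \gamma_n(b)$ of orbits of points $b \in \proj_{\cc_K}^1$ in the $v$-adic (weak Berkovich) sense, and every Type I point is at infinite hyperbolic distance from $\eta_D$. So the fact that the orbit $\Gamma \cdot \eta_D$ ``escapes every bounded Berkovich neighborhood of $\eta_D$'' (which is indeed what \Cref{lemma off the convex hull variant} yields, since the lower bound on $\delta(\gamma(\eta_D), \Sigma_S)$ grows with word length) says nothing about whether a limit point sits inside your annulus: metric escape of the orbit of one Type II point does not control weak accumulation of the Type I orbits $\Gamma(b)$. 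Your conclusion that the limit points ``lie outside a fixed-size Berkovich neighborhood of $\eta_D$ along the chosen tangent direction'' is a non sequitur as stated, and it is precisely the content one has to prove.

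The bridge the paper uses is \Cref{cor S in Omega}, resting on \Cref{lemma distance gamma(b) a}: if the closest point in $\Sigma_S$ to $\eta_a$ is not a distinguished vertex, then $v(\gamma(b) - a)$ is bounded above uniformly over all but at most one $\gamma \in \Gamma$, so $a$ cannot be a limit point and hence $a \in \Omega$. One then checks (\Cref{prop element at prescribed distance in Omega}) that for $\varepsilon$ small enough every Type I point of the annulus retracts onto $\Sigma_S$ at a non-vertex point of the segment through $\eta_D$; this is where the hypotheses you never exploit enter: $\infty \in S$ guarantees that the whole ray $[\eta_{a_i}, \eta_\infty]$ through $\eta_D$ lies in $\Sigma_S$ (with $a_i \in S \cap D$ serving as the center $c$, so both annuli $A_{a_i}(d(D), d(D)+\varepsilon)$ and $A_{a_i}(d(D)-\varepsilon, d(D))$ work), and finiteness of the vertex set lets one shrink $\varepsilon$ so the punctured neighborhood of $\eta_D$ contains no vertex. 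Without an argument of this kind --- some uniform control of where the points $\gamma(b)$ land relative to $\Sigma_S$, not just of the orbit of $\eta_D$ --- your claim that the annulus is contained in $\Omega$ does not go through, and that claim is the whole content of the proposition beyond the known meromorphy of $\Theta_{a,b}^\Gamma$ on $\Omega$.
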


As we have discussed in \S\ref{sec proof of main theta}, a theta function $\Theta_{a, b}^\Gamma$ is meromorphic on $\Omega \subset \proj_K^1$.  Therefore, \Cref{prop meromorphic} is proved immediately from the following proposition and corollary.

\begin{prop} \label{prop element at prescribed distance in Omega}

Let $S \subset \proj_K^1$ be an optimal subset with $\infty \in S$.  Choose an element $a_i \in S$ and a real number $r$ such that the point $\eta_{a_i}(r) \in \Berk$ lies in $\Sigma_S$ but is not a vertex.  Let $a \in \cc_K$ be an element satisfying $v(a - a_i) = r$.  Then the closest point in $\Sigma_S$ to $\eta_a$ is $\eta_{a_i}(r)$, and we have $a \in \Omega$.

\end{prop}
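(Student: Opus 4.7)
The plan is to reduce both statements to a single local analysis: once I show that $\eta_{a_i}(r)$ is the closest point in $\Sigma_S$ to $\eta_a$, the claim $a \in \Omega$ follows immediately from \Cref{cor S in Omega}, since by hypothesis $\eta_{a_i}(r)$ is not a vertex and in particular not a distinguished vertex.

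To prove the closest-point claim, I would analyze the tangent directions of $\Sigma_S$ at $\eta_{a_i}(r)$. Because $\infty \in S$, the path $[\eta_{a_i}, \eta_\infty]$ lies in $\Sigma_S$ and passes through $\eta_{a_i}(r)$, so two tangent directions of $\Sigma_S$ at $\eta_{a_i}(r)$ are immediately visible: the ``downward'' direction toward $\eta_{a_i}$, traced by sub-discs $D_{a_i}(r')$ with $r' > r$, and the ``upward'' direction toward $\eta_\infty$, traced by discs properly containing $D_{a_i}(r)$. Since $\eta_{a_i}(r)$ is by hypothesis not a natural vertex of $\Sigma_S$, no neighborhood of it in $\Sigma_S$ contains a star with $\geq 3$ emanating edges, so these two are in fact the only tangent directions of $\Sigma_S$ at $\eta_{a_i}(r)$.

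Next I would observe that the tangent direction in $\Berk$ at $\eta_{a_i}(r)$ pointing toward $\eta_a$ is a third direction, not present in $\Sigma_S$. The condition $v(a - a_i) = r$ places $a$ in $D_{a_i}(r)$ but in no strictly smaller disc centered at $a_i$, so for all sufficiently small $\varepsilon > 0$ the element $a$ lies in a sub-disc of $D_{a_i}(r)$ of log-radius $r + \varepsilon$ which is disjoint from $D_{a_i}(r + \varepsilon)$; this directly shows that the direction toward $\eta_a$ differs from the downward direction toward $\eta_{a_i}$, and of course it also differs from the outward direction toward $\eta_\infty$. Consequently any non-backtracking path from $\eta_a$ into $\Sigma_S$ must pass through $\eta_{a_i}(r)$ before reaching any other point of $\Sigma_S$, so $\eta_{a_i}(r)$ is the closest such point. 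The only non-routine step is the local-directions picture above, which depends only on the definitions of vertex and natural vertex in \Cref{dfn convex hull} together with the containment $[\eta_{a_i}, \eta_\infty] \subset \Sigma_S$, and I do not foresee any serious obstacle beyond being careful with the tree structure of $\Berk$ when identifying the three distinct tangent directions.
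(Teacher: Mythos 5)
Your proposal is correct and follows essentially the same route as the paper: the paper also shows that if the closest point of $\Sigma_S$ to $\eta_a$ were strictly below $\eta_{a_i}(r)$, the segment joining it to $\eta_{a_i}(r)$ would give a third branch of $\Sigma_S$ at $\eta_{a_i}(r)$, contradicting that it is not a natural vertex (using $[\eta_{a_i},\eta_\infty]\subset\Sigma_S$), and then applies \Cref{cor S in Omega} since a non-vertex is in particular not a distinguished vertex. Your tangent-direction phrasing is just a repackaging of that same contradiction argument.
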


\begin{proof}

Let $\xi$ be the closest point in $\Sigma_S$ to $\eta_a$.  Then we have $\xi \in [\eta_a, \eta_a(r)]$, so that the disc $D \subset \cc_K$ corresponding to $\xi$ satisfies $a \in D \subseteq D_{a_i}(r)$.  Suppose that $\xi \neq \eta_{a_i}(r)$, so that we have $D \subsetneq D_{a_i}(r)$.  Then the logarithmic radius of $D$ is $< r$ so that we have $a_i \notin D$ and that the smallest disc containing $D$ and $a_i$ is $D_{a_i}(r)$.  The half-open segment $[\xi, \eta_{a_i}(r)] \smallsetminus \{\eta_{a_i}(r)\} \subset \Sigma_S$ clearly intersects a neighborhood of the point $\eta_{a_i}(r)$.  But since $\eta_{a_i}(r)$ is not a natural vertex, every sufficiently small neighborhood of $\eta_{a_i}(r)$ in the space $\Sigma_S$ is contained in a path $[\eta_{a_i}(r + \varepsilon), \eta_{a_i}(r - \varepsilon)] \subsetneq [\eta_{a_i}, \eta_\infty] \subset \Sigma_S$ for some small $\varepsilon > 0$.  This is a contradiction, so the closest point in $\Sigma_S$ to $\eta_a$ must be $\eta_{a_i}(r)$.  Then \Cref{cor S in Omega} says that we have $a \in \Omega$.
\end{proof}

\begin{cor} \label{cor annuli in Omega}

Let $S \subset \proj_K^1$ be an optimal subset with $\infty \in S$.  Choose an element $a_i \in S$ and a real number $r$ such that the point $\eta_{a_i}(r) \in \Berk$ lies in $\Sigma_S$.  Then there exist $\varepsilon > 0$ such that we have 
\begin{equation}
A_{a_i}(r, r + \varepsilon), A_{a_i}(r - \varepsilon, r) \subset \Omega.
\end{equation}

\end{cor}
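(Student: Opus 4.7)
The plan is to deduce this corollary from \Cref{prop element at prescribed distance in Omega} by verifying that, for radii $r'$ sufficiently close to $r$ with $r' \neq r$, the point $\eta_{a_i}(r')$ satisfies the hypothesis of that proposition (namely, that it lies on $\Sigma_S$ but is not a vertex), and then observing that every element of the annuli $A_{a_i}(r, r+\varepsilon)$ and $A_{a_i}(r-\varepsilon, r)$ has this form of distance from $a_i$.

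First I would establish that for every real number $s$, the point $\eta_{a_i}(s)$ lies in $\Sigma_S$.  Since $a_i \in S$ and $\infty \in S$ by hypothesis, both $\eta_{a_i}$ and $\eta_\infty$ belong to $\Sigma_S$, so the unique non-backtracking path $[\eta_{a_i}, \eta_\infty]$ is contained in $\Sigma_S$.  From the explicit description of paths in \Cref{dfn berk}, this path consists exactly of the points $\eta_{a_i}(s)$ as $s$ ranges over $\rr$ (together with the two Type I endpoints), so in particular every $\eta_{a_i}(s)$ is in $\Sigma_S$.

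Next I would control the location of vertices along this path.  By \Cref{prop vertex cluster correspondence}(b), the vertices of $\Sigma_S$ are in bijection with a subset of the clusters of $S$, which is a finite set since $S$ is finite.  Hence I can choose $\varepsilon > 0$ small enough that the segment $[\eta_{a_i}(r-\varepsilon), \eta_{a_i}(r+\varepsilon)]$ contains no vertex of $\Sigma_S$ apart from possibly $\eta_{a_i}(r)$ itself.  Then for every $r' \in (r-\varepsilon, r+\varepsilon) \smallsetminus \{r\}$, the point $\eta_{a_i}(r')$ lies in $\Sigma_S$ and is not a vertex, so \Cref{prop element at prescribed distance in Omega} applies: every $z \in \cc_K$ with $v(z - a_i) = r'$ belongs to $\Omega$.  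Since an arbitrary element of $A_{a_i}(r, r+\varepsilon)$ (respectively $A_{a_i}(r-\varepsilon, r)$) satisfies $v(z - a_i) = r'$ for some such $r'$, both annuli are contained in $\Omega$.

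There is no real obstacle to this argument; the only subtlety worth being careful about is that the hypothesis $\infty \in S$ is genuinely used to guarantee that the whole vertical path $[\eta_{a_i}, \eta_\infty]$ sits inside $\Sigma_S$, and hence that $\eta_{a_i}(r')$ lies in $\Sigma_S$ for every $r'$ (not merely for $r'$ between two adjacent vertex radii along an axis belonging to $\Sigma_S$).
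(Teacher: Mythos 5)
Your proof is correct and follows essentially the same route as the paper's: both use finiteness of the vertices of $\Sigma_S$ to find a punctured interval of radii around $r$ for which $\eta_{a_i}(r')$ lies on the path $[\eta_{a_i}, \eta_\infty] \subset \Sigma_S$ (this is where $\infty \in S$ enters) and is not a vertex, and then apply \Cref{prop element at prescribed distance in Omega} to every point of the two annuli. No gaps.
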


\begin{proof}

Since there are only finitely many vertices of $\Sigma_S$, for small enough $\varepsilon > 0$, there is no vertex of $\Sigma_S$ in the punctured neighborhood $B(\{\eta\}, \varepsilon) \smallsetminus \{\eta\}$.  Choosing any $r' \in \rr$ with $r < r' \leq r + \varepsilon$, we have that the point $\eta_{a_i}(r') \in [\eta_a, \eta_\infty] \subset \Sigma_S$ is not a vertex.  Then \Cref{prop element at prescribed distance in Omega} says that for any $a \in \proj_K^1$ with $v(a - a_i) = r'$, we have $a \in \Omega$.  The claim that $A_{a_i}(r, r + \varepsilon) \subset \Omega$ follows, and the claim that $A_{a_i}(r - \varepsilon, r)$ follows from a similar argument.
\end{proof}

\subsection{An approximation for some values of the theta function} \label{sec proof of main approximation}

Having set up all of the background results that we will need, we now set out to prove \Cref{thm main berk}.  The goal of this section is to prove \Cref{thm one segment} below, which will be crucial to our computations.  In equations below, the expression [h.v.t.] (``higher-valuation terms'') will often appear in sums of elements of $\cc_K$; this means means we are adding an unspecified term whose valuation is higher than that of each of the other terms in the sum.  The following lemma will be used in the proof of \Cref{thm one segment} and also serves to set up some notation used in the theorem.

\begin{lemma} \label{lemma at most one n}

With all of the above notation, suppose that $S \subset \proj_K^1$ is an optimal subset satisfying that $a_j = 0$ and $b_j = \infty$ for some index $j$.  Choose an element $a \in \proj_{\cc_K}^1$, and for $0 \leq n \leq p - 1$, let $\eta_{(n)}^a$ denote the closest point in the convex hull $\Sigma_S$ to $\eta_{\zeta_p^n a}$.  Then all points $\eta_{(n)}^a$ share the same closest point in $\hat{\Lambda}_{(j)}$, and there is at most one $n$ such that $\eta_{(n)}^a \notin \hat{\Lambda}_{(j)}$.

\end{lemma}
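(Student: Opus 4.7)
The cases $a \in \{0, \infty\}$ are trivial since each $\zeta_p^n a = a$, so assume $a \in \cc_K^\times$ and set $r := v(a)$. Since $v(\zeta_p^n a) = r$ for every $n$, \Cref{prop valuations}(a) shows that the common closest point in $\Lambda_{(j)}$ to each $\eta_{\zeta_p^n a}$ is $\xi := \eta_0(r)$. A natural candidate for the desired common closest point in $\hat{\Lambda}_{(j)}$ is $\xi^* := \eta_a(r^*)$, where $r^* := r + v(p)/(p-1)$. The key ultrametric observation is that $v(\zeta_p^n a - a) = r + v(\zeta_p^n - 1) \geq r^*$ for every $n$, so the discs $D_{\zeta_p^n a}(r^*)$ all coincide with $D_a(r^*)$; hence $\xi^* = \eta_{\zeta_p^n a}(r^*)$ is independent of $n$ and lies on the boundary of $\hat{\Lambda}_{(j)}$ (reducing to $\xi$ when $p$ differs from the residue characteristic, in which case $\hat{\Lambda}_{(j)} = \Lambda_{(j)}$).

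I would then decompose each path as $[\eta_{\zeta_p^n a}, \xi] = [\eta_{\zeta_p^n a}, \xi^*] \cup [\xi^*, \xi]$, where the second piece is the common segment inside $\hat{\Lambda}_{(j)}$ shared by all $p$ paths and the first pieces enter $\xi^*$ from $p$ pairwise distinct tangent directions. The distinctness reduces to checking that the residues in $k$ of $(\zeta_p^n a - a)/\varpi$ for a uniformizer $\varpi$ of valuation $r^*$ are pairwise distinct: in residue characteristic $p$ they are $\bar 0, \bar 1, \ldots, \overline{p-1} \in k \supseteq \ff_p$, while in residue characteristic different from $p$ (so $\xi^* = \xi$) they are the $p$ distinct $p$-th roots of unity in $k$. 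Classifying each $n$ according to whether $\eta_{(n)}^a$ lies on the common segment $[\xi^*, \xi]$ (case (i)) or strictly before $\xi^*$ on its own branch (case (ii)), I observe that in case (i) the point $\eta_{(n)}^a$ equals a fixed $\eta^* \in \hat{\Lambda}_{(j)}$ independent of $n$ (and is its own closest point in $\hat{\Lambda}_{(j)}$), while in case (ii) the closest point in $\hat{\Lambda}_{(j)}$ to $\eta_{(n)}^a$ is $\xi^*$. If case (ii) ever occurs, then by connectedness of $\Sigma_S$ (containing both that $\eta_{(n)}^a$ and $\xi$) the whole segment $[\xi^*, \xi]$ lies in $\Sigma_S$, forcing $\eta^* = \xi^*$ and so unifying the two closest-point descriptions.

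The hard part of the plan is to show that case (ii) arises for at most one index $n$; this is the step where the optimality hypothesis on $S$ is essential. I would argue by contradiction: if case (ii) held for distinct $n_1, n_2$, then $\Sigma_S$ would extend off $\xi^*$ in two distinct tangent directions belonging to the single $s_j$-orbit on directions at $\xi^*$ (note that $s_j$ fixes $\xi^*$ by the same ultrametric identity $D_{\zeta_p a}(r^*) = D_a(r^*)$, and permutes this $p$-element orbit cyclically). Applying $s_j^{n_1 - n_2} \in \Gamma_0$ to the pair $\{a_l, b_l\} \subset S$ whose branch extends in direction $n_2$ would produce a folding of $S$ in the sense of \cite[Algorithm 4.2]{yelton2024branch} that simplifies the branching structure of $\Sigma_S$ at $\xi^*$, contradicting the optimality of $S$ via \cite[Definition 3.12, Lemma 3.18]{yelton2024branch}. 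The most delicate step --- and the main obstacle --- is to make this folding-and-contradiction argument rigorous by carefully invoking the combinatorial definition of a good folding from \cite{yelton2024branch} and verifying that the resulting set $S'$ does constitute a genuine reduction in the tree complexity of $\Sigma_S$.
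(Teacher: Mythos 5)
Your treatment of the first assertion is correct and is in fact a more self-contained route than the paper's: identifying the common closest point as $\xi^* = \eta_a\bigl(v(a) + \tfrac{v(p)}{p-1}\bigr)$ via the identity $v(\zeta_p^n a - a) = v(a) + v(\zeta_p^n - 1) \geq v(a) + \tfrac{v(p)}{p-1}$, and then running the case analysis (all $\eta_{(n)}^a$ on the common segment toward $\Lambda_{(j)}$, versus some $\eta_{(n)}^a$ strictly before $\xi^*$ on its own branch) does yield that all $\eta_{(n)}^a$ share the same closest point of $\hat{\Lambda}_{(j)}$, where the paper instead quotes \cite[Proposition 2.6(d)]{yelton2024branch}. (Minor quibble: the residues you list are correct only up to a common unit factor, but the directions at $\xi^*$ are distinct either way.)

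The gap is in the second assertion, which is the heart of the lemma and the only place optimality enters: you explicitly leave the contradiction-with-optimality step as an unexecuted plan, and the plan as stated is not adequate. First, the result you would need from \cite{yelton2024branch} is not Lemma 3.18 (optimal $\Rightarrow$ $p$-superelliptic) but the characterization of optimality given by Proposition 3.11 together with Definition 3.12, which is exactly what the paper invokes: it turns the existence in $\Sigma_S$ of a point outside $\hat{\Lambda}_{(j)}$ together with its image under a nontrivial power of $s_j$, also outside $\hat{\Lambda}_{(j)}$, into a contradiction with optimality. Trying instead to exhibit a ``good folding'' directly from two branches in $s_j$-related directions amounts to reproving that proposition, and the verification is genuinely sensitive to which branch is folded onto which: folding the longer branch onto the shorter one need not decrease the relevant complexity measure, so ``simplifies the branching structure'' cannot be taken for granted. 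Second, even granting the criterion, you still have to produce the second point of $\Sigma_S$ in the $\langle s_j \rangle$-orbit: the paper does this by choosing the index $m$ with $\delta(\eta_{(m)}^a, \Lambda_{(j)})$ maximal and checking, using that $s_j^{m-n}$ is an isometry fixing the common point $\xi^*$ and carrying the direction of $\eta_{\zeta_p^n a}$ to that of $\eta_{\zeta_p^m a}$, that $s_j^{m-n}(\eta_{(n)}^a)$ lies on the segment $[\xi^*, \eta_{(m)}^a] \subset \Sigma_S$ because $\delta(\eta_{(n)}^a, \xi^*) \leq \delta(\eta_{(m)}^a, \xi^*)$. Your sketch contains no substitute for this maximality argument, and without it (or the appeal to \cite[Proposition 3.11]{yelton2024branch}) the ``at most one $n$'' statement remains unproved; as written, the proposal establishes only the first claim of the lemma.
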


\begin{proof}

As we have $\hat{\Lambda}_{(j)} \subset \Sigma_S$, for any $n \in \{0, \dots, p - 1\}$, the closest point in $\hat{\Lambda}_{(j)}$ to $\eta_{(n)}^a$ is clearly the closest point in $\hat{\Lambda}_{(j)}$ to $\eta_{\zeta_p^n a}$.  Meanwhile, we may take $s_0 \in \PGL_2(K)$ to be the automorphism $z \mapsto \zeta_p z$, so for any two indices $n, n'$, we have $\eta_{\zeta_p^{n'} a} = s_0^{n' - n}(\eta_{\zeta_p^n a})$.  Now by applying \cite[Proposition 2.6(d)]{yelton2024branch}, we get that $\eta_{\zeta_p^n a}$ and $s_0^{n' - n}(\eta_{\zeta_p^{n} a} = \eta_{\zeta_p^{n'} a}$ share the same closest point in $\hat{\Lambda}_{(j)}$ (which we now denote by $\xi^a$), and the first statement of the lemma follows.

Let $m$ be an index such that $\delta(\eta_{(m)}^a, \Lambda_{(j)}) \geq \delta(\eta_{(n)}^a, \Lambda_{(j)})$ for $0 \leq n \leq p - 1$.  If $\eta_{(m)}^a \in \hat{\Lambda}_{(j)}$, then we have $\eta_{(n)}^a \in \hat{\Lambda}_{(j)}$ for all other indices $n$ as well and we are done, so assume instead that we have $\eta_{(m)}^a \notin \hat{\Lambda}_{(j)}$.  Choose any other index $n \neq m$.  Now, keeping in mind that $\eta_{\zeta_p^m a} = s_0^{m - n}(\eta_{\zeta_p^n a})$, by \cite[Proposition 2.6(d)]{yelton2024branch} we have 
\begin{equation}
[\eta_{(n)}^a, \xi^a] \cup [\xi^a, s_0^{m - n}(\eta_{(n)}^a)] = [\eta_{(n)}^a, s_0^{m - n}(\eta_{(n)}^a)] \subset [\eta_{\zeta_p^n a}, \eta_{\zeta_p^m a}] = [\eta_{\zeta_p^n a}, \xi^a] \cup [\xi^a, \eta_{\zeta_p^m a}],
\end{equation}
and therefore $s_0^{m - n}(\eta_{(n)}^a) \in [\xi^a, \eta_{\zeta_p^m a}]$.  Since we have 
\begin{equation}
\delta(s_0^{m - n}(\eta_{(n)}^a), \xi^a) = \delta(s_0^{m - n}(\eta_{(n)}^a), s_0^{m - n}(\xi^a)) = \delta(\eta_{(n)}^a, \xi^a) \leq \delta(\eta_{(m)}^a, \xi^a),
\end{equation}
this gives us $s_0^{m - n}(\eta_{(n)}^a) \in [\eta_{(m)}^a, \xi^a] \subset \Sigma_S$.  Therefore, we have the inclusion $[\eta_{(n)}^a, s_0^{m - n}(\eta_{(n)}^a] = [\eta_{(n)}^a, \xi^a] \cup [\xi^a, s_0^{m - n}(\eta_{(n)}^a] \subset \Sigma_S$, and now if $\eta_{(n)}^a \notin \hat{\Lambda}_{(j)}$ (so that $s_0^{m - n}(\zeta_{(n)}) \notin \hat{\Lambda}_{(j)}$ as well by \cite[Proposition 2.6(c)]{yelton2024branch}), one sees using \cite[Proposition 3.11, Definition 3.12]{yelton2024branch} that this contradicts the fact that $S$ is optimal.
\end{proof}

\begin{thm} \label{thm one segment}

Suppose that $S \subset \proj_K^1$ is an optimal subset satisfying the following conditions:
\begin{enumerate}[(i)]
\item we have $0 =: a_j, \infty =: b_j, 1 =: b_i \in S$ for some indices $i \neq j$; 
\item the point $\mathfrak{v}_j \in \Lambda_{(j)}$ corresponding to the disc $\{z \in \cc_K \ | \ v(z) \geq 0\}$ consisting of the integral elements is a distinguished vertex of $\Sigma_{S, 0}$; and 
\item the point $\mathfrak{v}_i := \eta_1 \vee \eta_{a_i} \in \Lambda_{(i)}$ (which corresponds to the disc $\{z \in \cc_K \ | \ v(z - 1) \geq v(a_i - 1)\}$) is a distinguished vertex of $\Sigma_{S, 0}$ satisfying $[\mathfrak{v}_i, \mathfrak{v}_j] \subseteq \Sigma_{S, 0} \smallsetminus \hat{\Lambda}_{(l)}$ for each $l \neq i, j$.
\end{enumerate}

Write $\lambda = a_i - b_i = a_i - 1$.  Given $a \in \Omega$, we define the points $\eta_{(n)}^a$ as in \Cref{lemma at most one n}.  We may approximate $\Theta(a)$ for certain inputs $a$ as follows.

\begin{enumerate}[(a)]

\item For small enough $\nu > 0$, and for any $a \in \Omega$ such that we have $\eta_{(n)}^a \in \Lambda_{(i)} \cup [\mathfrak{v}_i, \mathfrak{v}_j] \cup B(\{\mathfrak{v}_j\}, \nu)$ for some $n \in \{0, \dots, p - 1\}$, we may make the approximation 
\begin{equation} \label{eq general approximation}
\Theta(a) = 1 + p \lambda (1 - a^p)^{-1} + \hvt.
\end{equation}
Moreover, the higher-valuation terms appearing in (\ref{eq general approximation}) above have valuation $> v(p) + v(\lambda) - v(1 - a^p) + \nu$.

\item Given $a \in \Omega$ and $n \in \{0, \dots, p - 1\}$ as in part (a), assume that we have $\eta_{(n)}^a \in (\Lambda_{(i)} \cup [\mathfrak{v}_i, \mathfrak{v}_j]) \smallsetminus \hat{\Lambda}_{(j)}$.  Then we may make the approximation 
\begin{equation} \label{eq better approximation}
\Theta(a) = 1 + \lambda (1 - a)^{-1} + \hvt.
\end{equation}

\end{enumerate}

\end{thm}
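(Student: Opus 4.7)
The plan is to analyze the infinite product $\Theta(a)=\prod_{\gamma\in\Gamma}(a-\gamma(a_i))/(a-\gamma(b_i))$ by explicitly identifying a finite collection of elements $\gamma\in\Gamma$ whose factors contribute to the leading order, then using \Cref{lemma N_gamma} to push all remaining contributions into the $\hvt$. Writing each factor as $1+u_\gamma$ with
\begin{equation*}
u_\gamma = \frac{\gamma(b_i)-\gamma(a_i)}{a-\gamma(b_i)} = -\frac{\gamma(a_i)\,c_i^\gamma}{a-\gamma(b_i)},
\end{equation*}
the product expands as $\Theta(a)=1+\sum_\gamma u_\gamma+(\text{cross-terms})$, and uniform lower bounds on $v(u_\gamma)$ obtained below will absorb all cross-terms into the $\hvt$, reducing the problem to computing $\sum_\gamma u_\gamma$.

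The dominant contributors for part (a) are the $p$ elements $\gamma_k := s_j^k s_i^{-k}$ for $k=0,1,\dots,p-1$, which lie in $\Gamma$ because their total exponent is $0\bmod p$. Using that $s_i$ fixes $a_i,b_i$ and $s_j\colon z\mapsto\zeta_p z$, a direct computation gives $\gamma_k(a_i)=\zeta_p^k(1+\lambda)$, $\gamma_k(b_i)=\zeta_p^k$, and hence $u_{\gamma_k}=\lambda/(1-\zeta_p^{-k}a)$. The partial-fraction identity $\sum_{k=0}^{p-1}(1-\zeta_p^{-k}a)^{-1}=p/(1-a^p)$ then yields
\begin{equation*}
\sum_{k=0}^{p-1}u_{\gamma_k} = \frac{p\lambda}{1-a^p},
\end{equation*}
which is the leading term claimed in~(\ref{eq general approximation}).

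It remains to show that $v(u_\gamma) > v(p)+v(\lambda)-v(1-a^p)+\nu$ for every $\gamma\in\Gamma\setminus\{\gamma_k\}$. Hypothesis (iii), via the identification $\mathfrak{v}_i=\eta_{a_i}\vee\eta_{b_i}$, is exactly the extra geometric assumption in \Cref{lemma N_gamma}(b), which yields the strict inequality $v(c_i^\gamma)>v(\lambda)$ for every such $\gamma$; \Cref{lemma N_gamma}(a) simultaneously gives the linear-in-word-length lower bound $v(c_i^\gamma)\ge t(\gamma)\cdot\min_{l\ne m}\delta(\hat\Lambda_{(l)},\hat\Lambda_{(m)})$, so that only finitely many $\gamma$ can fall below any prescribed valuation threshold. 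The denominator $v(a-\gamma(b_i))$ is controlled from below using the positional hypothesis on $\eta_{(n)}^a$: since $\eta_{(n)}^a$ lies in the prescribed region, the point $a$ stays uniformly far from the orbit $\Gamma\cdot b_i$, with the parameter $\nu$ absorbing the near-$\mathfrak{v}_j$ case $\eta_{(n)}^a\in B(\{\mathfrak{v}_j\},\nu)$. Combining these yields the required bound, completing part (a).

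Part (b) follows from part (a) by an algebraic manipulation of the leading term. The strengthened hypothesis $\eta_{(n)}^a\in(\Lambda_{(i)}\cup[\mathfrak{v}_i,\mathfrak{v}_j])\setminus\hat\Lambda_{(j)}$ forces $v(\zeta_p^n a-1)>v(p)/(p-1)$; factoring $1-a^p=(1-\zeta_p^n a)\prod_{k\ne 0}(1-\zeta_p^{-k}\cdot\zeta_p^n a)$ and using $\prod_{k=1}^{p-1}(1-\zeta_p^{-k})=p$, one obtains $v(1-a^p)=v(\zeta_p^n a-1)+v(p)$ and $p\lambda/(1-a^p)=\lambda/(1-\zeta_p^n a)+\hvt$, where the remainder is of sufficiently high valuation. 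In the case $n=0$ this simplifies directly to $\lambda/(1-a)+\hvt$, yielding~(\ref{eq better approximation}). The main obstacle throughout is the valuation bookkeeping in the bound on remaining factors: simultaneously controlling the infinite-product tail over long words and verifying that the short-word non-dominant contributions each lie strictly above the precise error threshold $v(p)+v(\lambda)-v(1-a^p)+\nu$ depends critically on combining \Cref{lemma N_gamma}(a) (linear growth in word length) with the strict inequality of \Cref{lemma N_gamma}(b) supplied by hypothesis (iii).
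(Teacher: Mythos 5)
Your identification of the dominant contributions is correct and matches the paper's: the $p$ elements $s_j^k s_i^{-k}$ contribute $\sum_k \lambda(1-\zeta_p^{-k}a)^{-1} = p\lambda(1-a^p)^{-1}$, and hypothesis (iii) does feed \Cref{lemma N_gamma}(b) via $\mathfrak{v}_i = \eta_{a_i}\vee\eta_{b_i}$. But your plan to finish by bounding each remaining factor $u_\gamma$ individually above the threshold $v(p)+v(\lambda)-v(1-a^p)+\nu$ has a genuine gap in the wild case (residue characteristic $p$), which is precisely the case this theorem is designed to cover. For a fixed nontrivial $\gamma$ in the transversal $G_j$ and its $p$ conjugates $s_j^n\gamma s_i^{-n}$, one computes $u_{s_j^n\gamma s_i^{-n}} = -\zeta_p^n\gamma(1)c_i^\gamma\,(a-\zeta_p^n\gamma(1))^{-1}$, of valuation roughly $v(c_i^\gamma)$ when $a$ and $\gamma(1)$ sit near $\mathfrak{v}_j$; summing over $n$ produces, by the same partial-fraction cancellation you use for the dominant terms, $p\,c_i^\gamma[\gamma(1)]^p([\gamma(1)]^p-a^p)^{-1}$, whose valuation is larger by $v(p)$. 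So the required inequality holds only for the \emph{grouped} sum over the $p$ conjugates, not term by term: \Cref{lemma N_gamma}(b) gives $v(c_i^\gamma) > v(\lambda)$, but nothing guarantees $v(c_i^\gamma) > v(\lambda) + v(p) + \nu$ (the gaps $\delta(\hat{\Lambda}_{(l)},\hat{\Lambda}_{(m)})$ can be arbitrarily small), so individual $u_\gamma$ can fall below your threshold even though their grouped contribution does not. This is exactly why the paper first collapses the product over each coset $\bigsqcup_n s_j^n G_j s_i^{-n}$ into a single factor $\frac{a^p-[\gamma(1+\lambda)]^p}{a^p-[\gamma(1)]^p} = 1 + pc_i^\gamma[\gamma(1)]^p([\gamma(1)]^p-a^p)^{-1}+\hvt$, where the factor of $p$ appears structurally, before comparing against the $\gamma=1$ factor. (A similar cancellation issue infects your dismissal of cross-terms, which you assert are absorbed by ``uniform lower bounds'' that are not in fact available per term.)

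Second, even after grouping, the real content of the proof is the denominator control you state in one sentence (``$a$ stays uniformly far from the orbit $\Gamma\cdot b_i$, with $\nu$ absorbing the near-$\mathfrak{v}_j$ case''). In the paper this is the inequality $v(\gamma(1)-\zeta_p^n a) - v(\gamma(1)) - v(1-\zeta_p^n a) \leq \nu$, proved by a case analysis on whether $\Lambda_{\gamma(1),\zeta_p^n a}$ meets $\Lambda_{(j)}$, using \Cref{lemma off the convex hull variant} to locate the closest point of $\Sigma_S$ to $\gamma(\Lambda_{(i)})$ in $\hat{\Lambda}_{(i_t)}$, \Cref{lemma at most one n} to reduce to a well-placed index $n$, \Cref{prop valuations} to convert distances to valuations, and a careful choice of $\nu$ relative to both $\min_\gamma(v(c_i^\gamma)-v(\lambda))$ (finite by \Cref{lemma N_gamma}(a)) and the geometry near $\mathfrak{v}_j$. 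None of this is carried out in your sketch, and it is not a routine verification: the hypotheses on $\eta_{(n)}^a$ enter exactly here. Your part (b), by contrast, is essentially the paper's argument ($v(a-1) > \tfrac{v(p)}{p-1}$ forces $p^{-1}(1-a^p) = (1-a)+\hvt$) and would be fine once (a) is established, though you should justify, as the paper does via \Cref{lemma at most one n}, why the relevant index can be taken to be $n=0$.
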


\begin{proof}

Define the subset $G_j \subset \Gamma$ as in the proof of \Cref{lemma N_gamma}, where it was observed that $\Gamma$ can be written as the disjoint union $\bigsqcup_{n = 0}^{p - 1} s_j^n G_j s_i^{-n}$.  Now our formula for the theta function $\Theta_{a_i, b_i}^\Gamma$ can be written as 
\begin{equation} \label{eq theta formula broken up}
\Theta_{a_i, b_i}^\Gamma(a) = \prod_{n = 0}^{p - 1} \Big(\prod_{\gamma \in s_j^n G_j s_i^{-n}} \frac{a - \gamma(1 + \lambda)}{a - \gamma(1)}\Big). 
\end{equation}
Now note that we may choose $s_j \in \Gamma_0$ to be the automorphism given by $z \mapsto \zeta_p z$ as it is of order $p$ and fixes $0, \infty \in \proj_K^1$.  Since the automorphism $s_i$ meanwhile fixes the points $a_i = 1 + \lambda, b_i = 1$, we may write the expression for $\Theta(a)$ in (\ref{eq theta formula broken up}) as 
\begin{equation} \label{eq theta of 0 eta distinguished}
\Theta_{a_i, b_i}^\Gamma(a) = \prod_{n = 0}^{p - 1} \Big(\prod_{\gamma \in G_j} \frac{a - \zeta_p^n \gamma(1 + \lambda)}{a - \zeta_p^n \gamma(1)}\Big) = \prod_{\gamma \in G_j} \Big(\prod_{n = 0}^{p - 1} \frac{a - \zeta_p^n \gamma(1 + \lambda)}{a - \zeta_p^n \gamma(1)}\Big) = \prod_{\gamma \in G_j} \frac{a^p - [\gamma(1 + \lambda)]^p}{a^p - [\gamma(1)]^p}.
\end{equation}

Fix any element $\gamma \in G_j$, and define $c_i^\gamma$ as in \Cref{lemma N_gamma}.  \Cref{lemma N_gamma}(b) says that we have $v(c_i^\gamma) \geq v(\lambda)$; in turn, by applying \Cref{prop valuations}(c) and using the property of being clustered in $\frac{v(p)}{p - 1}$-separated pairs, we obtain $v(\lambda) = \delta(\mathfrak{v}_i, \Lambda_{(j)}) \geq \delta(\Lambda_{(i)}, \Lambda_{(j)}) > \frac{2v(p)}{p - 1}$.  

So, taking into account the inequality $v(c_i^\gamma) > \frac{v(p)}{p - 1}$, we may compute the approximation 
\begin{equation} \label{eq approximation}
\begin{aligned}
\frac{a^p - [\gamma(1 + \lambda)]^p}{a^p - [\gamma(1)]^p} &= \frac{a^p - [\gamma(1)]^p(1 + c_i^\gamma)^p}{a^p - [\gamma(1)]^p} = \frac{a^p - [\gamma(1)]^p - [\gamma(1)]^p(pc_i^\gamma + \dots + (c_i^\gamma)^p)}{a^p - [\gamma(1)]^p} \\
 = 1 + &[\gamma(1)]^p([\gamma(1)]^p - a^p)^{-1}(pc_i^\gamma + \dots + (c_i^\gamma)^p) \\
  = 1 + &pc_i^\gamma[\gamma(1)]^p([\gamma(1)]^p - a^p)^{-1} + \hvt.
\end{aligned}
\end{equation}

We now set out to show that, under the hypothesis of part (a), the approximation in (\ref{eq approximation}) gives an element of $\cc_K$ which is farthest from $1$ only for $\gamma = 1$, implying that the term in the product formula for $\Theta_{a_i, b_i}^\Gamma(a)$ in (\ref{eq theta of 0 eta distinguished}) corresponding to $\gamma = 1$ is the one which dominates; the approximation in (\ref{eq general approximation}) then follows directly from applying (\ref{eq approximation}) to $\gamma = 1$.  For the second statement of part (a), it is necessary and sufficient to show something more: that the difference between $1$ and the term in the aforementioned product formula for $\gamma \in G_j \smallsetminus \{1\}$ has valuation exceeding that of the analogous difference for $\gamma = 1$ by more than $\nu$ if $\nu$ is chosen small enough.  Equivalently, we will show under the hypotheses of (a) and (b) that the inequality $v(pc_i^\gamma[\gamma(1)]^p([\gamma(1)]^p - a^p)^{-1}) > v(p\lambda(1 - a^p)^{-1}) + \nu$ holds for all $\gamma \in G_j \smallsetminus \{1\}$ for small enough $\nu$.  With a few straightforward algebraic computations and rearrangements, for each $\gamma \in G_j \smallsetminus \{1\}$, this inequality can be rewritten as 
\begin{equation} \label{eq desired inequality}
v(c_i^\gamma) - v(\lambda) > \sum_{n = 0}^{p - 1} [v(\gamma(1) - \zeta_p^n a) - v(\gamma(1)) - v(1 - \zeta_p^n a)] + \nu.
\end{equation}

It follows from \Cref{lemma N_gamma}(a)(b) that $\{v(c_i^\gamma) - v(\lambda)\}_{\gamma \in \Gamma \smallsetminus \{1\}} \subset \qq_{> 0}$ has a minimum element which is positive.  Choose a positive number 
\begin{equation*}
\nu < \tfrac{1}{p + 1}\min_{\gamma \in \Gamma \smallsetminus \{1\}}\{v(c_i^\gamma) - v(\lambda)\}.
\end{equation*}
After possibly replacing $\nu$ with a smaller positive number, we have $B(\Lambda_{(j)}, \nu) \cap \hat{\Lambda}_{(l)} = \varnothing$ for any $l \neq j$.  After possibly further shrinking $\nu$, we that the closest point in $\Lambda_{(j)}$ to each other space $\hat{\Lambda}_{(l)}$ either has distance $> \nu$ from $\mathfrak{v}_j \in \Lambda_{(j)}$ or is itself the point $\mathfrak{v}_j$.  We will freely assume for the rest of the proof that $\nu$ is small enough for both of these properties to hold.  Let us fix arbitrary choices of $\gamma \in G_j \smallsetminus \{1\}$ and $n \in \{0, \dots, p - 1\}$ and prove the inequality 
\begin{equation} \label{eq desired inequality2}
v(\gamma(1) - \zeta_p^n a) - v(\gamma(1)) - v(1 - \zeta_p^n a) \leq \nu;
\end{equation}
 this clearly implies (\ref{eq desired inequality}) by construction of $\nu$ and the fact that $v(c_i^\gamma) > v(\lambda)$ by \Cref{lemma N_gamma}(b).

As the subspace $\Lambda_{(i)} \cup [\mathfrak{v}_i, \mathfrak{v}_j] \cup B(\{\mathfrak{v}_j\}, \nu) \subset \Berk$ is simply connected and intersects $\hat{\Lambda}_{(j)}$, it follows from \Cref{lemma at most one n} that we have $\eta_{(n)}^a \in \Lambda_{(i)} \cup [\mathfrak{v}_i, \mathfrak{v}_j] \cup B(\{\mathfrak{v}_j\}, \nu)$ for some index $n$ (as is our hypothesis for part (a)) if and only if it holds for all indices $n$.  Assume first that we have $\Lambda_{\gamma(1), \zeta_p^n a} \cap \Lambda_{(j)} \neq \varnothing$.  Then using \Cref{prop valuations}(c), we get 
\begin{equation} \label{eq intersecting axes inequality1}
v(\gamma(1) - \zeta_p^n a) - v(\gamma(1)) \leq v(\gamma(1) - \zeta_p^n a) - \min\{v(\gamma(1)), v(\zeta_p^n a)\} = \delta(\Lambda_{\gamma(1), \zeta_p^n a}, \Lambda_{(j)}) = 0.
\end{equation}
Meanwhile, using \Cref{prop valuations}(a), one sees that our hypotheses imply that $-\nu \leq v(a) \leq \nu$, so we have 
\begin{equation} \label{eq intersecting axes inequality2}
\sum_{n = 0}^{p - 1} v(1 - \zeta_p^n a) \geq -p\nu > v(\lambda) - v(c_i^\gamma) + \nu.
\end{equation}
Putting (\ref{eq intersecting axes inequality1}) and (\ref{eq intersecting axes inequality2}) together gives us the desired inequality (\ref{eq desired inequality2}).

Let us now assume instead that the axes $\Lambda_{\gamma(1), \zeta_p^n a}$ and $\Lambda_{(j)}$ are disjoint.  As in the proof of \Cref{lemma N_gamma}, we have $\gamma(\Lambda_{(i)}) = \Lambda_{\gamma(1), \gamma(1 + \lambda)}$; in particular, we have $\eta_{\gamma(1)} \vee \eta_{\gamma(1 + \lambda)} = \gamma(v)$ for some $v \in \Lambda_{(i)} \subset \Sigma_S$.  Then using \Cref{lemma off the convex hull variant}, we get that the closest point $\xi$ in $\Sigma_S$ to $\eta_{\gamma(1)} \vee \eta_{\gamma(1 + \lambda)}$(and thus to $\Lambda_{\gamma(1), \gamma(1 + \lambda)}$) lies in $\hat{\Lambda}_{(i_t)}$ (with $i_t$ defined as in that lemma) and that we have $\xi \neq \eta_{\gamma(1)} \vee \eta_{\gamma(1 + \lambda)}$ (and thus $\xi \notin \Lambda_{\gamma(1), \gamma(1 + \lambda)}$).  In particular, the point $\xi$ is also the closest point in $\Sigma_S$ to $\gamma(1)$.  From the fact that $i_t \neq j$ and from hypothesis (iii), we have $\xi \notin \Lambda_{(i)} \cup [\mathfrak{v}_i, \mathfrak{v}_j] \cup B(\{\mathfrak{v}_j\}, \nu)$.  Thus, our hypothesis on the point $\eta_{(n)}^a$ shows that $\eta_{(n)}^a \neq \xi$ and that in fact, the point $\xi$ must be farther from $\Lambda_{(j)}$ than $\eta_{(n)}^a$ is.  It follows from all of this that the closest point in $\Lambda_{\gamma(1), \zeta_p^n a}$ to $\Lambda_{(j)}$ is $\eta_{(n)}^a$.  We may now apply \Cref{prop valuations}(a)(c) to get 
\begin{equation} \label{eq zeta a and gamma(1)}
v(\gamma(1) - \zeta_p^n a) - v(\gamma(1)) = \delta(\Lambda_{\gamma(1), \zeta_p^n a}, \Lambda_{(j)}) = \delta(\eta_{(n)}^a, \Lambda_{(j)}).
\end{equation}

From what we have already observed, the points $\eta_{(n)}^a$ and $\gamma(\mathfrak{v}_i)$ share the same closest point $\xi'$ in $\Lambda_{(j)}$; we deduce using \Cref{lemma off the convex hull variant} that $\xi'$ is also the closest point in $\Lambda_{(j)}$ to $\hat{\Lambda}_{(i_t)}$.  Let us assume for the moment that $\eta_{(n)}^a \in B(\Lambda_{(j)}, \nu)$ for $0 \leq n \leq p - 1$.  We then have $\delta(\xi', \mathfrak{v}_j) \leq \delta(\eta_{(n)}^a, \mathfrak{v}_j) \leq \nu$.  Now if $\xi' \neq \mathfrak{v}_j$, we have (by assumption on $\nu$) the contradicting inequality $\delta(\xi', \mathfrak{v}_j) > \nu$.  We therefore have $\xi' = \mathfrak{v}_j$.  Then by \Cref{prop valuations}(a), we have $v(a) = v(\zeta_p^n a) = 0$, and so $v(1 - \zeta_p^n a) \geq 0$.  Meanwhile, we get $v(\gamma(1) - \zeta_p^n a) - v(\gamma(1)) \leq \nu$ from (\ref{eq zeta a and gamma(1)}), and by construction, we have $\nu < \frac{1}{p+1}(v(c_i^\gamma) - v(\lambda))$.  Putting these inequalities together, we see that the desired inequality in (\ref{eq desired inequality2}) holds; we have thus proved part (a) in the case that $\eta_{(n)}^a \in B(\Lambda_{(j)}, \nu)$ for $0 \leq n \leq p - 1$.

Now, as it does not affect the conclusions of the proposition to replace $a$ with $\zeta_p^n a$ for any exponent $n$, let us assume that $\delta(\eta_{(0)}^a, \Lambda_{(j)}) \geq \delta(\eta_{(n)}^a, \Lambda_{(j)})$ for all $n$.  By \Cref{lemma at most one n}, we either have $\eta_{(0)}^a = \dots = \eta_{(p-1)}^a \in \hat{\Lambda}_{(j)}$ or we have $\eta_{(0)}^a \notin \hat{\Lambda}_{(j)}$ and $\eta_{(1)}^a = \dots = \eta_{(p-1)}^a \in [\eta_{(0)}^a, \mathfrak{v}_j] \cap \hat{\Lambda}_{(j)}$.  Now suppose that we have $\eta_{(n)}^a \in [\mathfrak{v}_i, \mathfrak{v}_j]$ for some (and thus for all) $n \in \{0, \dots, p - 1\}$.  Then for each $n$, the closest point in $\Lambda_{1, \zeta_p^n a}$ to $\Lambda_{(j)}$ is $\eta_{(n)}^a$.  We may now apply \Cref{prop valuations}(a)(c) to get 
\begin{equation} \label{eq zeta a and 1}
v(1 - \zeta_p^n a) = v(1 - \zeta_p^n a) - v(1) = \delta(\Lambda_{1, \zeta_p^n a}, \Lambda_{(j)}) = \delta(\eta_{(n)}^a, \Lambda_{(j)}).
\end{equation}
Combining this with (\ref{eq zeta a and gamma(1)}) gives us the equality $v(1 - \zeta_p^n a) = v(\gamma(1) - \zeta_p^n a) - v(\gamma(1))$ for $0 \leq n \leq p - 1$.  This again directly implies the desired inequality in (\ref{eq desired inequality2}) under the assumption that $\eta_{(n)}^a \in [\mathfrak{v}_i, \mathfrak{v}_j]$ for some (all) $n$.  Part (a) is therefore proved.

Now assume the hypothesis of part (b).  Retaining the assumption made above that we have $\delta(\eta_{(0)}^a, \Lambda_{(j)}) \geq \delta(\eta_{(n)}^a, \Lambda_{(j)})$ for all $n$, as before, \Cref{lemma at most one n} tells us that $\eta_{(1)}^a, \dots, \eta_{(p-1)}^a \in \hat{\Lambda}_{(j)}$ and so we must have $\eta_{(0)}^a \in (\Lambda_{(i)} \cup [\mathfrak{v}_i, \mathfrak{v}_j]) \smallsetminus \hat{\Lambda}_{(j)}$.  If $\eta_{(0)}^a \in [\mathfrak{v}_i, \mathfrak{v}_j] \smallsetminus \hat{\Lambda}_{(j)}$, then we clearly have $\eta_{(0)}^a = \eta_1 \vee \eta_a$, whereas if $\eta_{(0)}^a \in \Lambda_{(i)}$, then it is easy to see that $\eta_{(0)}^a \in \Lambda_{(i)} \cap \Lambda_{a, 1} \neq \varnothing$ and $\eta_1 \vee \eta_a \in \Lambda_{(i)}$.  In either case, we get $\eta_1 \vee \eta_a \notin \hat{\Lambda}_{(j)}$.  Applying \Cref{prop valuations}(c), we obtain 
\begin{equation}
v(a - 1) = v(a - 1) - v(1) = \delta(\eta_{(0)}^a, \Lambda_{(j)}) > \frac{v(p)}{p - 1}.
\end{equation}
Now, keeping in mind the above inequality, we compute 
\begin{equation}
p^{-1}(1 - a^p) = p^{-1}(1 - (1 + (a - 1))^p) = p^{-1}(-p(a - 1) + \hvt) = (1 - a) + \hvt.
\end{equation}
Such an approximation is preserved under taking reciprocals, and so the approximation given in part (a) implies the one claimed by part (b).
\end{proof}

\begin{rmk} \label{rmk one segment res char not p}

In the case where the residue characteristic of $K$ is different from $p$, one can show, using a variant of the arguments in the above proof, that the approximation in (\ref{eq general approximation}) holds under the alternate hypothesis that for $1 \leq n \leq p - 1$, we have $[\mathfrak{v}_i, \eta_{(n)}^a] \cap \Lambda_{(l)} = \varnothing$ for all indices $l \neq i, j$.  Indeed, as in the above proof, we may assume that $\eta_{(1)}^a = \dots = \eta_{(p-1)}^a \in \hat{\Lambda}_{(j)} = \Lambda_{(j)}$, which via the equalities in (\ref{eq zeta a and gamma(1)}) and (\ref{eq zeta a and 1}) implies that $\delta(\gamma(1) - \zeta_p^n a) - v(\gamma(1)) = \delta(1 - \zeta_p^n a) = 0$ for $1 \leq n \leq p - 1$.  Now the inequality in (\ref{eq desired inequality}) that is needed for the conclusion of part (a) simplifies to 
\begin{equation} \label{eq desired inequality res char not p}
v(c_i^\gamma) - v(\lambda) > v(\gamma(1) - a) - v(\gamma(1)) - v(1 - a)
\end{equation}
for $\gamma \in G_j \smallsetminus \{1\}$, which can be shown to hold when $\eta_{(0)}^a \notin [\mathfrak{v}_i, \mathfrak{v}_j]$ (the case when $\eta_{(0)}^a \in [\mathfrak{v}_i, \mathfrak{v}_j]$ is already covered by \Cref{thm one segment}(a)) through arguments of a similar flavor, outlined as follows.

As we have $v(1 - a) = 0$, after applying \Cref{prop valuations}(c), the crucial inequality to verify is 
\begin{equation} \label{eq crucial inequality res char not p}
\delta(\Lambda_{\gamma(1), a}, \Lambda_{(j)}) < v(c_i^\gamma) - v(\lambda).
\end{equation}
If the path $[\eta_{\gamma(1)}, \eta_{(0)}^a]$ intersects $\Lambda_{(j)}$, then we have $\delta(\Lambda_{\gamma(1), a}, \Lambda_{(j)}) = 0$, and (\ref{eq crucial inequality res char not p}) holds as $v(c_i^\gamma) - v(\lambda)$ is positive thanks to \Cref{lemma N_gamma}(b).  If, on the other hand, we have $[\eta_{\gamma(1)}, \eta_{(0)}^a] \cap \Lambda_{(j)} = \varnothing$, one verifies using \Cref{lemma off the convex hull variant} that the path $[\mathfrak{v}_i, \Lambda_{(i_t)}]$ intersects $\Lambda_{(j)}$, so that $\delta(\mathfrak{v}_i, \Lambda_{(i_t)}) > \delta(\mathfrak{v}_i, \Lambda_{(j)}) = v(\lambda)$.  Then, using (\ref{eq estimating v(N_gamma)}), we get $\delta(\Lambda_{(i_t)}, \Lambda_{(j)}) < v(c_i^\gamma) - v(\lambda)$.  Meanwhile, the fact that $\eta_{(0)}^a \in [\eta_{\gamma(1)}, \Lambda_{(j)}]$ implies $\delta(\Lambda_{\gamma(1), a}, \Lambda_{(j)}) \leq \delta(\Lambda_{(i_t)}, \Lambda_{(j)})$, so we get the desired inequality (\ref{eq crucial inequality res char not p}).

\end{rmk}

\subsection{The image of one segment of the convex hull under the theta function} \label{sec proof of main one segment}

Our next step is to use \Cref{thm one segment} to compute the values on a particular subspace of $\Sigma_S$ of the function $(\Theta_{a_i, b_i}^\Gamma)_*$ (the existence of which was established in \S\ref{sec proof of main theta}) induced by the theta function $\Theta_{a_i, b_i}^\Gamma$.

We retain all of the above notation and define, for any subspace $\Lambda \subset \Berk$ and real number $\nu > 0$, the subspace $B(\{\mathfrak{v}_j\}, \nu)^- \subset \Berk$ to be the subspace of the neighborhood $B(\{\mathfrak{v}_j\}, \nu)$ consisting of the points of Type II or III corresponding to discs in $\cc_K$ whose elements all have valuation $\leq 0$.  We remark that, due to \Cref{prop valuations}(a), an alternate (equivalent) definition of the space $B(\{\mathfrak{v}_j\}, \nu)^-$ is that it consists of those points in $B(\{\mathfrak{v}_j\}, \nu)$ whose closest point in $\Lambda_{0, \infty}$ lies in the path $[\mathfrak{v}_j, \eta_\infty]$.

The main result of this subsection is as follows.

\begin{prop} \label{prop one segment}

Suppose that $S \subset \proj_K^1$ is an optimal subset satisfying hypotheses (i)-(iii) of \Cref{thm one segment}.  As in the statement of that theorem, we write $\mathfrak{v}_j = \eta_0(0)$, and we write $\Theta$ for the theta function $\Theta_{a_i, b_i}^\Gamma$.

\begin{enumerate}[(a)]

\item We have $v(\Theta(0) - 1) = v(p) + v(\lambda)$.

\item For sufficiently small $\nu > 0$, the induced map $\Theta_*$ established in \S\ref{sec proof of main theta} on $\Sigma_S$, when restricted to the subspace $[\eta_1, \eta_1(d)] \subset \Sigma_S$, is one-to-one and bicontinuous.  Its values on $[\eta_1, \eta_1(-\nu)]$ are given by 
\begin{equation} \label{eq outputs of eta(1, d) Gamma}
\Theta_*(\eta_1(d)) = 
\begin{cases}
\eta_1(v(p) + v(\lambda) - pd) &\mathrm{for} \ -\nu \leq d \leq \frac{v(p)}{p - 1} \\
\eta_1(v(\lambda) - d) &\mathrm{for} \ d \geq \frac{v(p)}{p - 1}
\end{cases}.
\end{equation}

\item For sufficiently small $\nu > 0$, for each $\eta \in B(\{\mathfrak{v}_j\}, \nu)^- \cap \Sigma_S \smallsetminus ([\eta_1, \mathfrak{v}_j] \cup \Lambda_{(j)})$, the point $\Theta_*(\eta)$ corresponds to a disc which does not contain the element $1$ but is centered at an element $c$ satisfying $v(c - 1) = v(p) + v(\lambda)$.

\end{enumerate}

Consequently, letting 
\begin{equation*}
U^- = [\eta_1, \mathfrak{v}_j] \cup (B(\{\mathfrak{v}_j\}, \nu)^- \cap \Sigma_S),
\end{equation*}
we have 
\begin{equation} \label{eq almost one-to-one}
\Theta_*([\eta_1, \mathfrak{v}_j]) \cap \Theta_*(U^- \smallsetminus [\eta_1, \mathfrak{v}_j]) = \varnothing.
\end{equation}

\end{prop}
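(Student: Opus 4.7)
The plan is to derive all four assertions from the approximations in Theorem~\ref{thm one segment}, interpreting them through the general principle that $\Theta_*(\eta_D)$ corresponds to the image disc $\Theta(D) \subset \cc_K$, whose center and log-radius are determined by the dominant term of the approximation to $\Theta(a)$ on $D$. Part~(a) is immediate: apply Theorem~\ref{thm one segment}(a) at $a = 0$, for which the closest point in $\Sigma_S$ to $\eta_0$ is $\mathfrak{v}_j \in \Lambda_{(j)}$; the approximation~(\ref{eq general approximation}) then yields $\Theta(0) = 1 + p\lambda + \hvt$ with higher-valuation terms of valuation strictly greater than $v(p) + v(\lambda)$, hence $v(\Theta(0) - 1) = v(p) + v(\lambda)$.

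For part~(b), I would split the path $[\eta_1, \eta_1(-\nu)]$ at $d = v(p)/(p-1)$. On $d > v(p)/(p-1)$, the point $\eta_1(d)$ lies in $(\Lambda_{(i)} \cup [\mathfrak{v}_i, \mathfrak{v}_j]) \setminus \hat{\Lambda}_{(j)}$, so Theorem~\ref{thm one segment}(b) applies, and for any $a$ with $v(a - 1) = d$ we get $v(\Theta(a) - 1) = v(\lambda) - d$, yielding $\Theta_*(\eta_1(d)) = \eta_1(v(\lambda) - d)$. On $-\nu \leq d \leq v(p)/(p-1)$, the point $\eta_1(d)$ lies in $[\mathfrak{v}_i, \mathfrak{v}_j] \cup B(\{\mathfrak{v}_j\}, \nu)$, so Theorem~\ref{thm one segment}(a) applies; a binomial expansion of $1 - (1 + (a-1))^p$ gives $v(1 - a^p) = pd$ on the boundary, yielding $v(\Theta(a) - 1) = v(p) + v(\lambda) - pd$ and hence $\Theta_*(\eta_1(d)) = \eta_1(v(p) + v(\lambda) - pd)$. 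The two formulas agree at the junction and are each strictly monotonic affine functions of $d$, so their concatenation is injective and bicontinuous.

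For part~(c), I would apply Theorem~\ref{thm one segment}(a) to the disc $D$ with $\eta = \eta_D$. The hypotheses imply $0, 1 \notin D$, and for $\nu$ sufficiently small, $D$ contains no $p$-th root of unity; hence $v(1 - c^p)$ is constant and equal to $0$ for $c \in D$. The approximation then shows that the image disc is centered at $1 + p\lambda(1 - c^p)^{-1}$, which has valuation distance exactly $v(p) + v(\lambda)$ from $1$; a short computation using $v(a^p - a'^p) \geq v(p) + r_D$ for $a, a' \in D$ shows that the image disc has log-radius strictly greater than $v(p) + v(\lambda)$, so $1$ is not in the image. The consequence~(\ref{eq almost one-to-one}) then follows by comparison: by part~(b), $\Theta_*([\eta_1, \mathfrak{v}_j])$ consists of discs $D_1(r)$ with $r \leq v(p) + v(\lambda)$; the image of $[\mathfrak{v}_j, \eta_0(-\nu)] \setminus \{\mathfrak{v}_j\}$ (the $\Lambda_{(j)}$-portion of $U^- \setminus [\eta_1, \mathfrak{v}_j]$) consists of discs $D_1(r)$ with $r > v(p) + v(\lambda)$, strictly larger than any disc in the first set; and by part~(c), the off-$\Lambda_{(j)}$ remainder maps to discs not containing $1$, hence disjoint from every $D_1(r)$.

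The main obstacle will be justifying the claim in part~(c) that $v(1 - c^p) = 0$ for $c \in D$. In residue characteristic $p$, the $p$-th roots of unity all reduce to $1$ and lie within $D_1(v(p)/(p-1))$; to exclude them from $D$, I would shrink $\nu$ enough that no branch of $\Sigma_S$ emanating from $\mathfrak{v}_j$ other than $[\mathfrak{v}_j, \mathfrak{v}_i]$ enters $D_1(v(p)/(p-1))$ within $B(\{\mathfrak{v}_j\}, \nu)$. Hypothesis~(iii) is essential here, as it prevents axes $\Lambda_{(l)}$ with $l \neq i, j$ from approaching the critical region near $\mathfrak{v}_j$; the clustered-in-$\frac{v(p)}{p-1}$-separated-pairs property of optimal sets (\Cref{prop clustered in pairs}) then ensures the required isolation.
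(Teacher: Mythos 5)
Your treatment of parts (a), (b) and the final disjointness statement follows essentially the same route as the paper; the only caveat there is that your ``general principle'' (that $\Theta_*(\eta_D)$ is the point of the image disc determined by the dominant term) is not literally correct as stated -- $\Theta$ is only meromorphic on $\Omega$, and images of discs need not be discs -- so the passage from the valuation computations $v(\Theta(a)-1)=r(d)$ to the identity $\Theta_*(\eta_1(d))=\eta_1(r(d))$ has to go through the annulus-image machinery (\Cref{prop meromorphic}, \cite[Theorem 7.12, Remark 7.14]{benedetto2019dynamics}, and an elementary lemma identifying the image annulus as one centered at $1$), exactly as the paper does. That is a gloss rather than an error, since your valuation estimates are the correct inputs to that machinery.

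The genuine gap is in part (c), at the claim $v(1-c^p)=0$. Avoiding the $p$-th roots of unity is not enough: you need $v(c-\zeta_p^n)=0$ for every $n$, i.e.\ that $c$ is not congruent to any $\zeta_p^n$ modulo the maximal ideal, and a small disc can miss all roots of unity while sitting inside the residue disc of one of them. You have also inverted which case is delicate. In residue characteristic $p$ the claim is automatic: $v(c-1)=0$ forces $v(c^p-1)=0$ because the reductions of $c^p-1$ and $(c-1)^p$ coincide; your proposed fix there is unnecessary, and in any case shrinking $\nu$ cannot change which directions the branches of $\Sigma_S$ at $\mathfrak{v}_j$ point in. The real issue is the tame case: a priori some $a_l\in S$ with $l\neq i,j$ could satisfy $v(a_l-\zeta_p^n)>0$, and then the branch of $\Sigma_S$ at $\mathfrak{v}_j$ toward $a_l$ produces points $\eta$ of the considered set, for every $\nu>0$, whose discs have $v(1-c^p)>0$; no choice of $\nu$ removes them. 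The paper excludes this configuration using optimality through \Cref{lemma at most one n}: for $n\neq 0$ the closest point of $\Sigma_S$ to $\eta_{\zeta_p^n}$ is $\mathfrak{v}_j$ itself, so no branch of $\Sigma_S$ at $\mathfrak{v}_j$ enters the residue disc of $\zeta_p^n$; an argument of this kind (or an equivalent use of optimality) is missing from your proposal, and the same point is needed to get $v(c-1)=0$ exactly (knowing only $1\notin D$ leaves $v(c-1)>0$ possible), which is what places the center of the image disc at distance exactly $v(p)+v(\lambda)$ from $1$. A further small slip: $v(a^p-a'^p)\geq v(p)+r_D$ fails in the wild case when $r_D<\frac{v(p)}{p-1}$ (one only gets $\geq\min\{v(p)+r_D,\,p\,r_D\}$), but this weaker bound, which is positive, still yields the needed conclusion that the image radius exceeds $v(p)+v(\lambda)$.
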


In order to prove the above theorem, we first need an elementary result regarding annuli.

\begin{lemma} \label{lemma annuli}

Let $A = A_a(r, s)$ be an (open) annulus, and suppose that there is a point $a' \in \cc_K$ and rational numbers $r' < s'$ such that 
\begin{enumerate}[(i)]
\item for every $z \in A$, we have $r' < v(z - a') < s'$ and 
\item conversely, for every $\rho$ such that $r' < \rho < s'$, there exists $z \in A$ such that $v(z - a') = \rho$.
\end{enumerate}
Then we have $r = r'$ and $s = s'$, and we have $v(a' - a) \geq s'$, so that $A = A_{a'}(r', s')$.

\end{lemma}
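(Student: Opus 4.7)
The plan is a short case analysis on the quantity $\tau := v(a' - a)$ relative to the radii $r$ and $s$, with the ultrametric inequality doing all the work. Define $f \colon A \to \rr \cup \{\infty\}$ by $f(z) = v(z - a')$. Hypotheses (i) and (ii) together assert that $f(A) = (r', s') \cap v(\cc_K^\times)$; in particular $f$ is finite on $A$.

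First I would eliminate the case $\tau \in (r, s)$. By the very definition of $A = A_a(r, s)$, this range of $\tau$ forces $a' \in A$, but then $f(a') = v(0) = +\infty$ violates hypothesis (i). Next I would eliminate the case $\tau \leq r$. For every $z \in A$ one has $v(z - a) > r \geq \tau$, so the strict form of the ultrametric inequality gives
\begin{equation*}
f(z) \;=\; v\bigl((z - a) - (a' - a)\bigr) \;=\; \tau,
\end{equation*}
making $f$ constant on $A$ and contradicting hypothesis (ii), since the value group of $\cc_K$ is dense in $(r', s')$.

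The remaining case, $\tau \geq s$, must therefore hold. Here $v(z - a) < s \leq \tau$ for every $z \in A$, and the ultrametric yields $f(z) = v(z - a)$. Hence $f(A) = (r, s) \cap v(\cc_K^\times)$, which combined with $f(A) = (r', s') \cap v(\cc_K^\times)$ forces $r = r'$ and $s = s'$. The inequality $v(a' - a) = \tau \geq s = s'$ is precisely what was claimed, and the identity $A = A_{a'}(r', s')$ follows by running the same ultrametric computation in reverse: for any $w \in \cc_K$ with $r' < v(w - a') < s' = s \leq \tau$, we have $v(w - a) = v(w - a')$, and vice versa.

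I expect no genuine obstacle; the argument is essentially bookkeeping with the ultrametric inequality. The only subtlety worth flagging is that hypothesis (i), read literally, already excludes $a'$ itself from $A$, which is exactly what is needed to dispose of the ``middle'' case $\tau \in (r, s)$ without further work.
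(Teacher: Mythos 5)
Your proof is correct. It uses the same elementary ingredients as the paper's argument (the strict ultrametric equality $v(x+y)=\min\{v(x),v(y)\}$ when $v(x)\neq v(y)$, and the observation that hypothesis (i) forces $a'\notin A$), but it is organized along a genuinely different decomposition. The paper first identifies the inner radius intrinsically: it notes that $\inf_{z,w\in A}v(z-w)$ equals $r$ when computed from the description of $A$ as centered at $a$, and equals $r'$ when computed from hypotheses (i)--(ii), giving $r=r'$ before anything is known about the location of $a'$; it then rules out $v(a'-a)\le r$ because that would force $v(z-a)=v(a'-a)\le r$ for $z\in A$, concludes $v(a'-a)\ge s$, and only at the end deduces $s'=s$. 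You instead run a trichotomy on $\tau=v(a'-a)$ from the start: the middle range is excluded because $a'$ would lie in $A$, the low range is excluded because $v(\cdot-a')$ would be constant on $A$, contradicting hypothesis (ii) (note the paper kills this case by contradicting the definition of $A$ rather than hypothesis (ii)), and in the remaining case $v(z-a')=v(z-a)$ on all of $A$, so both equalities $r=r'$ and $s=s'$ drop out at once from comparing images of the valuation function (using that $r',s'$ are rational and the value group of $\cc_K$ is dense). The trade-off is mild: your route avoids the pairwise-distance invariant entirely and gets both radii in one stroke, while the paper's route makes the inner radius visibly an invariant of the set $A$ independent of any choice of center. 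One small point worth keeping in mind is that hypothesis (ii), read for arbitrary real $\rho$, only makes sense for $\rho$ in the value group; your contradiction in the low case needs only that $(r',s')$ contains some attainable value different from $\tau$, which is automatic, so nothing breaks.
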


\begin{proof}

Given any points $z, w \in A$ such that $v(z - a) \neq v(w - a)$, by the non-archimedean property, we have $v(z - w) = \min\{v(z - a), v(w - a)\}$.  This easily implies 
\begin{equation} \label{eq infemum}
\inf_{z, w \in A} v(z - w) = r.
\end{equation}
By a similar argument, the hypotheses (i) and (ii) of the statement imply that the same infemum in (\ref{eq infemum}) equals $r'$, so we get $r = r'$.  Now the fact that $a' \notin A$ by hypothesis (i) implies that we have either $v(a' - a) \geq s$ or $v(a' - a) \leq r$.  If $v(a' - a) \leq r$, then applying the non-archimedean property and using the fact that $v(z - a') \geq r' = r$ gives us $v(z - a) = \min\{v(z - a'), v(a' - a)\} = v(a' - a) \leq r$, which contradicts the construction of $A$.  We therefore have $v(a' - a) \geq s$.  It follows that $A = A_{a'}(r, s)$; in other words, $a'$ is also a center of the open annulus $A$.  Now it is immediate from hypotheses (i) and (ii) that $s' = s$.
\end{proof}

\begin{proof}[Proof (of \Cref{prop one segment})]

Part (a) immediately follows from the estimation 
\begin{equation}
\Theta(0) = 1 + p \lambda + \hvt
\end{equation}
obtained by from putting $a = 0$ into the approximation given by \Cref{thm one segment}(a).

Choose $\nu > 0$ small enough that the conclusion of \Cref{thm one segment}(a) is satisfied.  Fix a real number $d$ with $-\nu \leq d < \frac{v(p)}{p - 1}$.  For sufficiently small $\varepsilon > 0$, there is no vertex of $\Sigma_S$ in the interior of the path $[\eta_1(d), \eta_1(d + \varepsilon)] \subset \Sigma_S$.  Then for any $\rho$ with $d < \rho < d + \varepsilon$ and any $a \in \cc_K$ with $v(a - 1) = \rho$, by \Cref{prop element at prescribed distance in Omega}, we have $a \in \Omega$.  We observe that 
\begin{equation}
1 - a^p = 1 - ((a - 1) + 1)^p = (1 - a)^p + \hvt.
\end{equation}
By \Cref{prop element at prescribed distance in Omega}, we may apply \Cref{thm one segment}(a) to get 
\begin{equation} \label{eq outputs of eta(1, d) radius of annulus}
v(\Theta(a) - 1) = v(p) + v(\lambda) - p\rho.
\end{equation}
Now \cite[Theorem 7.12]{benedetto2019dynamics}, with the help of \Cref{prop meromorphic}, says that, after possibly shrinking $\varepsilon > 0$, the image $\Theta(A_1(d, d + \varepsilon))$ coincides with an open annulus $A_a(r, s)$ (for some point $a \in \cc_K$ and rational numbers $r < s$).  Meanwhile, one deduces immediately from (\ref{eq outputs of eta(1, d) radius of annulus}) that for every $z \in \Theta(A_1(d, d + \varepsilon))$ we have 
\begin{equation}
v(p) + v(\lambda) - p(d + \varepsilon) < v(z - 1) < v(p) + v(\lambda) - pd.
\end{equation}
Now applying \Cref{lemma annuli}, we get 
\begin{equation}
\Theta(A_1(d, d + \varepsilon)) = A_1(v(p) + v(\lambda) - p(d + \varepsilon), v(p) + v(\lambda) - pd).
\end{equation}
Now the formula claimed in (\ref{eq outputs of eta(1, d) Gamma}) in the case that $0 \leq d < \frac{v(p)}{p - 1}$ follows from applying the last statement of \cite[Theorem 7.12]{benedetto2019dynamics}.

Now fix a real number $d > \frac{v(p)}{p - 1}$.  For sufficiently small $\varepsilon > 0$, there is again no vertex of $\Sigma_S$ in the interior of the path $[\eta_1(d - \varepsilon), \eta_1(d)] \subset \Sigma_S$.  Then for any $\rho$ with $d - \varepsilon < \rho < d$ and any $a \in \cc_K$ with $v(a - 1) = \rho$, again we have $a \in \Omega$ by \Cref{prop element at prescribed distance in Omega}.  It is then straightforward to deduce from the approximation provided by \Cref{thm one segment}(b) that we have 
\begin{equation}
v(\Theta(a) - 1) = v(\lambda) - \rho.
\end{equation}
This time, \cite[Theorem 7.12, Remark 7.14]{benedetto2019dynamics}, with the help of \Cref{prop meromorphic}, says that for small enough $\varepsilon > 0$, the image $\Theta(A_1(d - \varepsilon, d))$ coincides with an open annulus $A_a(r, s)$ (for some point $a \in \cc_K$ and rational numbers $r < s$).  Now the same argument as above involving \Cref{lemma annuli} shows that we get 
\begin{equation}
\Theta(A_1(d - \varepsilon, d)) = A_1(v(\lambda) - d, v(\lambda) - d + \varepsilon).
\end{equation}
Now the formula claimed in (\ref{eq outputs of eta(1, d) Gamma}) in the case that $d \geq \frac{v(p)}{p - 1}$ follows from applying the last statement of \cite[Theorem 7.12]{benedetto2019dynamics} (again with the help of \cite[Remark 7.14]{benedetto2019dynamics}).

We have thus proved the formulas in (\ref{eq outputs of eta(1, d) Gamma}), and it is immediate from these formulas that the claims of being one-to-one and bicontinuous hold, so part (b) is proved.

To prove part (c), again choose $\nu$ to be small enough that the conclusion of \Cref{thm one segment}(a) is satisfied; we may assume that the only distinguished vertex in the neighborhood $B(\{\mathfrak{v}_j\}, \nu)$ is $\mathfrak{v}_j$.  Choose a point $\eta \in B(\{\mathfrak{v}_j\}, \nu)^- \cap \Sigma_S \smallsetminus ([\eta_1, \mathfrak{v}_j] \cup \Lambda_{(j)})$, and let $\xi$ be the closest point in $\Lambda_{(j)}$ to $\eta$.  By definition, the point $\xi$ is a distinguished vertex.  As the path $[\eta, \xi] \cup [\xi, \mathfrak{v}_j] = [\eta, \mathfrak{v}_j]$ is contained in the neighborhood $B(\{\mathfrak{v}_j\}, \nu)$, we must have $\xi = \mathfrak{v}_j$ and $\delta(\eta, \mathfrak{v}_j) \leq \nu$.

From the structure of $\Sigma_S$ and our hypotheses on $\eta$, there is some element $a_l \in S$ for $l \neq i, j$ such that $\eta \in [\mathfrak{v}_j, \eta_{a_l}]$, so that $\eta = \eta_{a_l}(d)$ for some (logarithmic) radius $d \in \rr$.  By \Cref{prop element at prescribed distance in Omega}, there is an element $z_0 \in \Omega$ such that $v(z_0, a_l) = d$ and the closest point in $\Sigma_S$ to $\eta_{z_0}$ is $\eta$.  We apply \Cref{prop valuations}(a) to get $v(z_0) = 0$ and \Cref{prop valuations}(c) to get $v(z_0 - 1) = v(z_0 - 1) - v(1) = 0$ since $\Lambda_{1, z_0} \cap \Lambda_{(j)} = \{\mathfrak{v}_j\} \neq \varnothing$.  Meanwhile, as $\eta = \eta_{z_0}(d) \neq \eta_0(d) \in \Lambda_{(j)}$, we have $\nu \geq d > 0$.

Let us now show that we have $v(z_0^p - 1) = 0$ as well.  If $p$ is the residue characteristic of $K$, then this certainly follows immediately from the fact that $v(z_0 - 1) = 0$, since the reduction of $z_0 - 1$ in the residue field is then a unit, while the reductions of $(z_0 - 1)^p$ and of $z_0^p - 1$ are equal.  We therefore suppose for the moment that $p$ is not the residue characteristic of $K$.  We already have $v(z_0 - 1) = 0$ and $v(z_0 - \zeta_p^n) \geq 0$ for $1 \leq n \leq p - 1$ and only need to show that equality holds to get $v(z_0^p - 1) = \sum_{n = 0}^{p - 1} v(z_0 - \zeta_p^n) = 0$.  \Cref{lemma at most one n} says we have (using the notation of that lemma) $\eta_{(n)}^1 \notin \hat{\Lambda}_{(j)} = \Lambda_{(j)}$ for at most one $n$.  As the path $[\eta_1, \mathfrak{v}_j]$ passes through $\eta_1 \vee \eta_{1+\lambda} \in \Sigma_S$, it is clear that $\eta_{(0)}^1 \notin \Lambda_{(j)}$, so we have $\eta_{(1)}^1 = \dots = \eta_{(p-1)}^1 \in \Lambda_{(j)}$ and, by \Cref{prop valuations}(a), even $\eta_{(1)}^1 = \dots = \eta_{(p-1)}^1 = \mathfrak{v}_j$.  If we have $v(z_0 - \zeta_p^n) =: d' > 0$ for some $n$, then, applying \Cref{prop valuations}(c), we have $\eta_{z_0}(d') = \eta_{z_0} \vee \eta_{\zeta_p^n} \notin \Lambda_{(j)}$.  From $\eta_{z_0}(d) \in \Sigma_S \smallsetminus \Lambda_{(j)}$, we get $\eta_{z_0}(\min\{d, d'\}) \in \Sigma_S \smallsetminus \Lambda_{(j)}$.  But as $\min\{d, d'\} > 0$, this point $\eta_{z_0}(\min\{d, d'\})$ lies in the interior of the path $[\eta_{\zeta_p^n}, \mathfrak{v}_j = \eta_{(n)}^1]$, a contradiction.  Therefore, we have $v(z_0 - \zeta_p^n) = 0$, as desired.

Given any $\rho$ such that $d > \rho > 0$, the point $\eta_{z_0}(\rho)$ lies in the interior of the path $[\eta_{z_0}(d), \mathfrak{v}_j] \subset B(\{\mathfrak{v}_j\}, \nu)$ and so is not a distinguished vertex.  Then by \Cref{prop element at prescribed distance in Omega}, there is an element $a \in \Omega$ such that $v(a - z_0) = \rho$ and the closest point in $\Sigma_S$ to $\eta_a$ is $\eta_{z_0}(\rho)$.  Note that from $v(z_0) = 0$ we get $v(a) = 0$ and so $v(a - \zeta_p^n z_0) \geq 0$ for $0 \leq n \leq p - 1$.  We thus see that $v(a^p - z_0^p) = v(a - z_0) + \sum_{n = 1}^{p - 1} v(a - \zeta_p^n z_0) \geq \rho$.  By the exact same argument as was used to show that $v(z_0^p - 1) = 0$, we have $v(a^p - 1) = 0$.

We may now apply \Cref{thm one segment} to both inputs $z_0$ and $a$ to get 
\begin{equation} \label{eq approximation of Theta(z_0) and Theta(a)}
\Theta(z_0) = 1 + p\lambda(1 - z_0^p)^{-1} + \hvt, \ \ \ \Theta(a) = 1 + p\lambda(1 - a^p)^{-1} + \hvt,
\end{equation}
where in both cases the higher-valuation terms have valuation greater than $v(p) + v(\lambda) - 0 + \nu > v(p) + v(\lambda) + \rho$.  Now we get the estimation 
\begin{equation}
\Theta(a) - \Theta(z_0) = p\lambda[(1 - a^p)^{-1} - (1 - z_0^p)^{-1}] + \hvt,
\end{equation}
where the higher-valuation terms again have valuation greater than $v(p) + v(\lambda) + \rho$.  Using the fact that $v(a^p - z_0^p) > v(1 - z_0^p) = 0$, we compute that the difference $(1 - a^p)^{-1} - (1 - z_0^p)^{-1} = ([1 - z_0^p] - [a^p - z_0^p])^{-1} - (1 - z_0^p)^{-1}$ has valuation $v(a^p - z_0^p) \geq \rho$, so we get  
\begin{equation}
v(\Theta(a) - \Theta(z_0)) \geq v(p) + v(\lambda) + \rho.
\end{equation}

As before, we may apply \cite[Theorem 7.12, Remark 7.14]{benedetto2019dynamics} and \Cref{lemma annuli}; this time we find that 
\begin{equation}
\Theta(A_{z_0}(d - \varepsilon, d)) = A_{\Theta(z_0)}(r, s)
\end{equation}
for $d > \varepsilon > 0$ and with $s > r > v(p) + v(\lambda) + d - \varepsilon > v(p) + v(\lambda)$.  It follows (again using \cite[Theorem 7.12, Remark 7.14]{benedetto2019dynamics}) that the output $\Theta_*(\eta)$ is $\eta_{\Theta(z_0)}(d')$ for some $d' > v(p) + v(\lambda)$.  At the same time, the approximation of $c := \Theta(z_0)$ given in (\ref{eq approximation of Theta(z_0) and Theta(a)}), combined with the fact that $v(1 - z_0^p) = 0$, tells us that $v(\Theta(z_0) - 1) = v(p) + v(\lambda)$.  Therefore the disc corresponding to $\Theta_*(\eta)$ cannot contain $1$.  Thus, we have proved part (c).

Now, by inspecting the formulas in (\ref{eq outputs of eta(1, d)}) given by part (b) for $d \geq 0$, which give output points corresponding to discs centered at $1 \in \cc_K$ and whose logarithmic radii are $\leq v(p) + v(\lambda)$, we conclude from the $-\nu \leq d \leq \frac{v(p)}{p - 1}$ case of part (b) and from part (c) that we have 
\begin{equation} \label{eq disjointness}
\Theta_*(\eta) \notin \Theta_*([\eta_1, \mathfrak{v}_j]) \text{ for all } \eta \in U^- \smallsetminus [\eta_1, \mathfrak{v}_j].
\end{equation}
We therefore get the final statement of the theorem asserting disjointness of images.
\end{proof}

\begin{cor} \label{cor one segment}

Suppose that $S \subset \proj_K^1$ is an optimal subset satisfying hypotheses (i)-(iii) of \Cref{thm one segment}.  For brevity of notation, write $\tilde{\Theta}$ for the theta function $\Theta_{a_i, b_i}^{\Gamma_0}$.

\begin{enumerate}[(a)]

\item We have $v(\tilde{\Theta}(0) - 1) = 2v(p) + v(\lambda)$.

\item The induced map $\tilde{\Theta}_*$, when restricted to the subspace $[\eta_1, \mathfrak{v}_j] \subset \Berk$, is one-to-one and bicontinuous.  Its values on $[\eta_1, \mathfrak{v}_j]$ are given by 
\begin{equation} \label{eq outputs of eta(1, d)}
\tilde{\Theta}_*(\eta(1, d)) = 
\begin{cases}
\eta(1, 2v(p) + v(\lambda) - pd) &\mathrm{for} \ 0 \leq d \leq \frac{v(p)}{p - 1} \\
\eta(1, v(p) + v(\lambda) - d) &\mathrm{for} \ \frac{v(p)}{p - 1} \leq d \leq v(\lambda) - \frac{v(p)}{p - 1} \\
\eta(1, pv(\lambda) - pd) &\mathrm{for} \ d \geq v(\lambda) - \frac{v(p)}{p - 1}
\end{cases}.
\end{equation}

\item For sufficiently small $\nu > 0$, defining $U^-$ as in the statement of \Cref{prop one segment}, we have 
\begin{equation} \label{eq almost one-to-one}
\tilde{\Theta}_*([\eta_1, \mathfrak{v}_j]) \cap \tilde{\Theta}_*(U^- \smallsetminus [\eta_1, \mathfrak{v}_j]) = \varnothing.
\end{equation}

\end{enumerate}

\end{cor}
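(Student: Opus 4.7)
The plan is to bootstrap from \Cref{prop one segment} via the identity $\tilde{\Theta} = \Theta^p$ in (\ref{eq Theta^p}) and the functoriality of the induced-map construction noted just prior to \Cref{prop meromorphic}, which together yield $\tilde{\Theta}_* = P_* \circ \Theta_*$ where $P(z) = z^p$. Hence every claim in the corollary reduces to combining a conclusion from \Cref{prop one segment} with a direct analysis of $P_*$ on the relevant portion of $\Berk$. For part (a), I would write $\Theta(0) = 1 + u$ with $v(u) = v(p) + v(\lambda)$ from \Cref{prop one segment}(a) and expand $(1+u)^p$ binomially: the linear term $pu$ has valuation $2v(p) + v(\lambda)$, and since $v(\lambda) > \frac{2v(p)}{p-1} \geq 0$, both the intermediate terms $\binom{p}{k} u^k$ (valuation $v(p) + k(v(p) + v(\lambda))$) and the top term $u^p$ (valuation $p(v(p) + v(\lambda))$) have strictly greater valuation, yielding $v(\tilde{\Theta}(0) - 1) = 2v(p) + v(\lambda)$.

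The main computation for part (b) is an explicit formula for $P_*$ on the axis $[\eta_1, \eta_\infty]$. Factoring $z^p - 1 = \prod_{j=0}^{p-1}(z - \zeta_p^j)$ and using $v(1 - \zeta_p^j) = \frac{v(p)}{p-1}$ for $j \neq 0$, an ultrametric case-split gives $P_*(\eta_1(r)) = \eta_1(pr)$ for $0 \leq r \leq \frac{v(p)}{p-1}$ and $P_*(\eta_1(r)) = \eta_1(r + v(p))$ for $r \geq \frac{v(p)}{p-1}$. Composing with the two-case formula of \Cref{prop one segment}(b) produces the three regimes in (\ref{eq outputs of eta(1, d)}): the first piece ($0 \leq d \leq \frac{v(p)}{p-1}$) yields intermediate radius $r' = v(p) + v(\lambda) - pd \geq v(\lambda) - \frac{v(p)}{p-1} > \frac{v(p)}{p-1}$ (using $v(\lambda) > \frac{2v(p)}{p-1}$), placing $P_*$ in its additive regime; in the second piece $r' = v(\lambda) - d$ crosses $\frac{v(p)}{p-1}$ precisely at $d = v(\lambda) - \frac{v(p)}{p-1}$, which is the breakpoint into the third piece where $P_*$ acts multiplicatively. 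I would then verify continuity at the two transition points and note that each piece is strictly monotonically decreasing in $d$, which gives the injectivity and bicontinuity on $[\eta_1, \mathfrak{v}_j]$ asserted in (b).

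For part (c), \Cref{prop one segment}(c) gives, for $\eta \in U^- \smallsetminus [\eta_1, \mathfrak{v}_j]$, that $\Theta_*(\eta) = \eta_c(d')$ with $v(c - 1) = v(p) + v(\lambda)$ (so $v(c) = 0$) and $d' > v(p) + v(\lambda) > \frac{v(p)}{p-1}$. Expanding $(c + w)^p - c^p$ for $v(w) \geq d'$, the linear term $pc^{p-1}w$ strictly dominates the higher binomial terms, so $P$ acts affinely up to a higher-valuation error on $D_c(d')$, giving $P_*(\eta_c(d')) = \eta_{c^p}(d' + v(p))$. The same cyclotomic factorization applied to $c^p - 1$ gives $v(c^p - 1) = 2v(p) + v(\lambda) < d' + v(p)$, so the disc corresponding to $\tilde{\Theta}_*(\eta)$ does not contain $1$, whereas by part (b) every point of $\tilde{\Theta}_*([\eta_1, \mathfrak{v}_j])$ has the form $\eta_1(r)$ and corresponds to a disc containing $1$; disjointness then follows. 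The main obstacle is purely bookkeeping: correctly tracking which regime of $P_*$ applies in each segment of the piecewise composition in part (b), and verifying continuity at the two breakpoints.
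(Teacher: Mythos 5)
Your parts (a) and (b) are correct and follow essentially the same route as the paper: write $\tilde{\Theta} = P \circ \Theta$ with $P(z) = z^p$, so $\tilde{\Theta}_* = P_* \circ \Theta_*$, and compose the formula for the image of a disc under $P$ (your cyclotomic derivation is exactly the standard formula the paper invokes) with the two-regime formula of \Cref{prop one segment}(b); the bookkeeping of the three regimes and the breakpoints matches the paper's.

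Part (c), however, has a genuine gap: you apply \Cref{prop one segment}(c) to every $\eta \in U^- \smallsetminus [\eta_1, \mathfrak{v}_j]$, but that proposition explicitly excludes points of $\Lambda_{(j)}$, and $U^- \smallsetminus [\eta_1, \mathfrak{v}_j]$ does contain a piece of the axis $\Lambda_{(j)}$, namely the points $\eta_0(d) = \eta_1(d)$ with $-\nu \leq d < 0$ lying on the $\eta_\infty$-side of $\mathfrak{v}_j$ (these are precisely the points needed later in the gluing argument of \S\ref{sec proof of main whole}). For such points your dichotomy fails outright: by \Cref{prop one segment}(b) (whose formula is valid down to $d = -\nu$) together with the $P_*$ formula, one gets $\tilde{\Theta}_*(\eta_1(d)) = \eta_1\bigl(2v(p) + v(\lambda) - pd\bigr)$, a point corresponding to a disc that \emph{does} contain $1$, so ``the image disc does not contain $1$'' cannot separate it from $\tilde{\Theta}_*([\eta_1, \mathfrak{v}_j])$. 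The correct separation for this case is a radius comparison: for $d < 0$ the radius $2v(p) + v(\lambda) - pd$ strictly exceeds $2v(p) + v(\lambda)$, whereas by part (b) every point of $\tilde{\Theta}_*([\eta_1, \mathfrak{v}_j])$ is of the form $\eta_1(d')$ with $d' \leq 2v(p) + v(\lambda)$. This is how the paper argues, treating the axis case and the non-axis case separately; your treatment of the non-axis case (computing $P_*(\eta_c(d')) = \eta_{c^p}(d' + v(p))$ and $v(c^p - 1) = 2v(p) + v(\lambda) < d' + v(p)$) is fine and a harmless variant of the paper's observation that no $\zeta_p^n$ lies in the disc, so the fix is only to add the missing axis case using the machinery you already set up.
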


\begin{proof}

Write $\Theta$ for $\Theta^\Gamma_{a_i, b_i}$.  As observed in (\ref{eq Theta^p}) above, we have $\tilde{\Theta} \equiv \Theta^p$; we can therefore express $\tilde{\Theta}$ as the composition $P \circ \Theta$, where $P : \proj_{\cc_K}^1 \to \proj_{\cc_K}^1$ is the $p$-power map $z \mapsto z^p$.  It follows that the induced map $\tilde{\Theta}_*$ is the composition of $\Theta_*$ with the map $P_* : \Berk \to \Berk$ induced by $P$.  By \cite[Proposition 7.6]{benedetto2019dynamics} we have $P_*(\eta_D) = \eta_{P(D)}$ for any disc $D \subset \cc_K$.  There is a well-known formula for the image of any disc under $P$ (see, for instance, \cite[Exercise 7.15]{benedetto2019dynamics}) given by 
\begin{equation} \label{eq formula for P_*}
P(D_a(r)) = 
\begin{cases}
D_{a^p}(pr) \ &\mathrm{for} \ r \leq v(a) + \frac{v(p)}{p - 1} \\
D_{a^p}(v(p) + (p - 1)v(a) + r) \ &\mathrm{for} \ r \geq v(a) + \frac{v(p)}{p - 1}
\end{cases}.
\end{equation}
Now parts (a) and (b) can be confirmed using the fact that $\tilde{\Theta}_* = P_* \circ \Theta_*$ and applying the above formula (\ref{eq formula for P_*}) to the outputs of $\tilde{\Theta}_*$ provided by \Cref{prop one segment}(a)(b).

To prove part (c), we begin by observing directly from \Cref{prop one segment}(a)(b) that we may describe the image of $[\eta_1, \mathfrak{v}_j]$ under $\Theta_*$ as 
\begin{equation}
\Theta_*([\eta_1, \mathfrak{v}_j]) = [\eta(1, v(p) + v(\lambda)), \eta_\infty] = \{\eta(1, d') \ | \ d' \leq v(p) + v(\lambda)\}.
\end{equation}
Choose a point $\eta \in U^- \smallsetminus [\eta_1, \mathfrak{v}_j]$.  If $\eta$ can be written as $\eta(0, d)$ for some (necessarily negative) $d$, then, using \Cref{prop one segment}(b) and the formula in (\ref{eq formula for P_*}), we compute 
\begin{equation}
\begin{aligned}
\tilde{\Theta}_*(\eta(0, d)) = (P_* \circ \Theta_*)(\eta(0, d)) = P_*(\eta(&1, v(p) + v(\lambda) - pd)) = \eta(1, 2v(p) + v(\lambda) - pd) \\
&\notin \{\eta(1, d') \ | \ d' \leq v(p) + v(\lambda)\} = \Theta_*([\eta_1, \mathfrak{v}_j]).
\end{aligned}
\end{equation}
Now suppose on the other hand that $\eta \notin \Lambda_{(j)}$.  Then \Cref{prop one segment}(c) implies that the point $\Theta_*(\eta)$ can be written as $\eta(c, r)$ where $r > v(c - 1) = v(p) + v(\lambda) > \frac{v(p)}{p - 1}$.  By observing that 
\begin{equation}
v(c - \zeta_p^n) = \min\{v(c - 1), v(\zeta_p^n - 1)\} = \min\{r, \tfrac{v(p)}{p - 1}\} = \tfrac{v(p)}{p - 1} < r
\end{equation}
for $1 \leq n \leq p - 1$, we deduce that the point $\Theta_*(\eta)$ corresponds to a disc $D$ not containing $\zeta_p^n$ for any $n$.  Then we have $1 \notin P(D)$, and since $P(D)$ is the disc corresponding to $\tilde{\Theta}_*(\eta) = P_*(\Theta_*(\eta))$, we again get $\tilde{\Theta}_*(\eta) \notin \tilde{\Theta}_*([\eta_1, \mathfrak{v}_j])$.  This completes the proof of part (c).
\end{proof}

\subsection{The image of the whole convex hull under the theta function} \label{sec proof of main whole}

This subsection consists of the rest of the proof of \Cref{thm main berk}.  We will actually prove a slightly more sophisticated statement by more precisely defining the claimed map $\pi_*$ in the following manner.  In the statement of \Cref{thm main berk}, we have implicitly provided a $K$-analytic isomorphism from the quotient of the set of $K$-points $\Omega_{\Gamma_0}(K)$ by the action of $\Gamma_0$ and the superelliptic curve $C / K$; let us extend this to an isomorphism over $\cc_K$ and denote it by $\vartheta : \Omega_{\Gamma_0} / \Gamma_0 \stackrel{\sim}{\to} C / \cc_K$.  Also, in that statement, the map $\pi$ is simply a bijection from the $(2g + 2)$-element set $S$ to the $(2g + 2)$-element set $\mathcal{B}$ of branch points of $C$.  Let us extend $\pi$ to the composition of $\vartheta$ with the quotient map $\Omega_{\Gamma_0} \twoheadrightarrow \Omega_{\Gamma_0} / \Gamma_0$.  If we have $\infty \notin \Omega_{\Gamma_0}$, then we may choose an automorphism $\sigma \in \PGL_2(\cc_K)$ such that $\infty \in \sigma(\Gamma_0)$ and replace $S$ with $\sigma(S)$ (so that the associated objects $\Gamma_0$ and $\Omega_{\Gamma_0}$ are replaced with $\sigma \Gamma_0 \sigma^{-1}$ and $\sigma(\Omega_{\Gamma_0})$ respectively) without affecting the curve $C$ (see \Cref{rmk no encumbrance}) or the structure of the convex hull $\Sigma_S$ (as $\sigma$ acts as a metric-preserving homeomorphism on $\Berk$).  Having done this, we now assume that we have $\infty \in \Omega_{\Gamma_0}$.

\begin{lemma} \label{lemma gluing variant}

With the above set-up, there exists $\tau \in \PGL_2(\cc_K)$ such that the map $\tau \circ \pi : \Omega_{\Gamma_0} \to \proj_K^1$ is the theta function $\Theta^{\Gamma_0}_{a, b}$ for some $a, b \in \Omega_{\Gamma_0}$ with $b \notin \Gamma_0(a)$ and $\infty \notin \Gamma_0(a) \cup \Gamma_0(b)$.

\end{lemma}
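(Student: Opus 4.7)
The plan is to reduce the statement to the tautology that two isomorphisms from $\proj_{\cc_K}^1$ to $\proj_{\cc_K}^1$ must differ by a fractional linear transformation. After passing to the quotient $\Omega_{\Gamma_0}/\Gamma_0 \cong \proj_{\cc_K}^1$, both $\pi$ and any theta function $\Theta^{\Gamma_0}_{a,b}$ descend to isomorphisms of $\proj_{\cc_K}^1$, so the composition of one with the inverse of the other is an element of $\PGL_2(\cc_K)$.

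Concretely, I would first choose points $a, b \in \Omega_{\Gamma_0}$ lying in three distinct $\Gamma_0$-orbits together with $\infty$; since $\Omega_{\Gamma_0}/\Gamma_0 \cong \proj_{\cc_K}^1$ is uncountable, only countably many orbits must be avoided, so such a choice is possible. With these $a, b$ in hand, the theta function $\Theta := \Theta^{\Gamma_0}_{a,b}$ is a well-defined $\Gamma_0$-invariant meromorphic function on $\Omega_{\Gamma_0}$, as recalled in \S\ref{sec proof of main theta}, and it descends to a map $\bar\Theta : \Omega_{\Gamma_0}/\Gamma_0 \to \proj_{\cc_K}^1$ whose only pole modulo $\Gamma_0$ is a single simple pole at the image of $b$. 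A degree-$1$ map between smooth projective curves is an isomorphism, so $\bar\Theta$ is an isomorphism of $\proj_{\cc_K}^1$.

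On the other hand, by construction the extended map $\pi$ is $\Gamma_0$-invariant and descends to the isomorphism $\bar\pi : \Omega_{\Gamma_0}/\Gamma_0 \stackrel{\sim}{\to} \proj_{\cc_K}^1$ coming from the isomorphism $\vartheta$. Setting $\tau := \bar\Theta \circ \bar\pi^{-1}$ produces an automorphism of $\proj_{\cc_K}^1$, i.e. an element of $\PGL_2(\cc_K)$, with the property that $\tau \circ \bar\pi = \bar\Theta$. Lifting this identity back up to $\Omega_{\Gamma_0}$ (using that both sides of $\tau \circ \pi = \Theta$ are $\Gamma_0$-invariant and agree on the quotient) gives the desired equality $\tau \circ \pi = \Theta^{\Gamma_0}_{a,b}$, with $a, b$ satisfying the stated orbit conditions.

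The argument is essentially formal once the background from \S\ref{sec proof of main theta} is in place; there is no real obstacle to overcome. The two points that require care are the existence of $a, b$ with the prescribed separation into distinct $\Gamma_0$-orbits (handled by a cardinality count) and the fact that $\bar\Theta$ is an isomorphism rather than merely a non-constant meromorphic map (handled by the single-simple-pole observation), both of which are already built into the discussion preceding this lemma.
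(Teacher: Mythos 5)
Your proposal is correct, but it reaches the lemma by a somewhat different route than the paper. The paper fixes the normalization first: it takes $\tau$ to be a fractional linear transformation sending $\pi(\infty)$ to $1$, then chooses $a, b$ to be lifts of $(\tau \circ \vartheta)^{-1}(0)$ and $(\tau \circ \vartheta)^{-1}(\infty)$ (which automatically places $a$, $b$, $\infty$ in distinct $\Gamma_0$-orbits); since $\Theta^{\Gamma_0}_{a,b}$ sends $\infty, a, b$ to $1, 0, \infty$, the composite $\tau \circ \vartheta \circ (\vartheta^{\Gamma_0}_{a,b})^{-1}$ is an analytic automorphism of $\proj_{\cc_K}^1$ fixing three points, hence the identity. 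You instead pick an admissible pair $(a,b)$ first (your cardinality count is fine, though all that is really needed is that $\Omega_{\Gamma_0}/\Gamma_0 \cong \proj_{\cc_K}^1$ has more than two points and that $\infty \in \Omega_{\Gamma_0}$, which the set-up has arranged) and define $\tau := \bar\Theta \circ \bar\pi^{-1}$, invoking the fact that an analytic automorphism of $\proj_{\cc_K}^1$ is a fractional linear transformation --- exactly the fact the paper itself uses in the very next lemma, so this is a legitimate input; your simple-pole argument that $\bar\Theta$ is an isomorphism is already recorded in \S\ref{sec proof of main theta}. The trade-off: your version proves the statement for every admissible pair $(a,b)$ rather than one specially normalized choice, at the cost of appealing to the full ``analytic automorphisms lie in $\PGL_2(\cc_K)$'' statement instead of the weaker three-fixed-points rigidity; for the downstream use, where only the existence of some $\tau, a, b$ matters, the two arguments are interchangeable.
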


\begin{proof}

Let $\tau \in \PGL_2(\cc_K)$ be an automorphism sending $\vartheta \circ \bar{\sigma}(\infty)$ to $1$, and choose elements $a, b \in \Omega_{\Gamma_0}$ whose respective images modulo the action of $\Gamma_0$ are $(\tau \circ \vartheta)^{-1}(0)$ and $(\tau \circ \vartheta)^{-1}(\infty)$.  Then it is clear from formulas for theta functions (more specifically, noting that we the images of $\infty, a, b$ under $\Theta^{\Gamma_0}_{a, b}$ are $1, 0, \infty$ respectively) that the composition of analytic isomorphisms $\tau \circ \vartheta \circ (\vartheta^{\Gamma_0}_{a, b})^{-1}$ fixes each of the points $1, 0, \infty \in \proj_{\cc_K}^1$.  The only analytic automorphism of the projective line fixing $3$ distinct points is the identity, so we get $\tau \circ \vartheta = \vartheta^{\Gamma_0}_{a, b}$.
\end{proof}

The above lemma says that the function $\pi$ is the composition $\tau^{-1} \circ \Theta^{\Gamma_0}_{a, b}$ for some $\tau \in \PGL_2(K)$ and $a, b \in \Omega_{\Gamma_0}$.  We have seen in \S\ref{sec proof of main theta} that the map $\Theta^{\Gamma_0}_{a, b}$ induces a map $(\Theta^{\Gamma_0}_{a, b})_* : \Sigma_S \to \Berk$; now by functoriality of this construction with respect to compositions of functions, the induced function $\pi_* = \tau^{-1} \circ (\Theta^{\Gamma_0}_{a, b})_*$ is defined on $\Sigma_S$ (here $\tau = \tau_*$ is the usual extension of $\tau$ to a metric-preserving self-homeomorphism of $\Berk$).  As $\tau^{-1}$ respects distances, is now clear that it suffices to prove the assertions of \Cref{thm main berk} under the assumption that $\tau = 1$, or in other words, that $\pi_* = (\Theta^{\Gamma_0}_{a, b})_*$.

From now on, we abbreviate $\Theta^{\Gamma_0}_{a, b}$ as $\tilde{\Theta}$ and set out to prove that the assertions of \Cref{thm main berk} hold for the map $\tilde{\Theta}_* : \Sigma_S \to \Berk$.  We also write $\mathfrak{v} \in \Berk$ for the point $\eta_1(0) = \eta_0(0)$.

Our strategy is now to deduce \Cref{thm main berk} from \Cref{cor one segment} (which asserts that the desired conclusions hold when restricted to a certain subspace $U^-$ of the convex $\Sigma_S$ where the set $S$ satisfies certain additional hypotheses) using a ``gluing method'' which exploits the fact that (loosely speaking) images of translations of this subspace $U^-$ cover the whole convex hull $\Sigma_S$.  The following lemma is crucial to the gluing process.

\begin{lemma} \label{lemma gluing}

Let $S \subset \proj_K^1$ be a $p$-superelliptic set with associated $p$-Whittaker group $\Gamma_0 < \PGL_2(K)$.  Let $\sigma \in \PGL_2(K)$ be a fractional linear transformation, and choose elements $a, b, a', b' \in \Omega_{\Gamma_0}$ with $b \notin \Gamma_0(a)$ and $b' \notin \Gamma_0(a')$ and $\infty \notin \Gamma_0(a) \cup \Gamma_0(b) \cup \Gamma_0(a') \cup \Gamma_0(b')$.  The automorphism $\sigma$ maps the set of non-limit points of $\Gamma_0$ to those of its conjugate $\Gamma_0^\sigma := \sigma \Gamma_0 \sigma^{-1}$, and there is a fractional linear transformation $\tau \in \PGL_2(\cc_K)$ and an analytic isomorphism $\bar{\sigma}$ such that the below diagram commutes, where $\pi_{\Gamma_0}$ and $\pi_{\Gamma_0^\sigma}$ are the obvious quotient maps.
\begin{equation} \label{eq gluing commutative diagram}
\xymatrixcolsep{5pc}\xymatrix{ \Omega_{\Gamma_0} \ar[r]^\sim_\sigma \ar@{->>}[d]_{\pi_{\Gamma_0}} \ar@/_2.5pc/@{->>}[dd]_{\tilde{\Theta}^{\Gamma_0}_{a, b}} &\Omega_{\Gamma_0^\sigma} \ar@{->>}[d]_{\pi_{\Gamma_0^\sigma}} \ar@/^2.5pc/@{->>}[dd]^{\tilde{\Theta}^{\Gamma_0^\sigma}_{\sigma(a'), \sigma(b')}} \\
\Omega_{\Gamma_0} / \Gamma_0 \ar[r]^\sim_{\bar{\sigma}} \ar[d]^\wr_{\vartheta^{\Gamma_0}_{a, b}} &\Omega_{\Gamma_0^\sigma} / \Gamma_0^\sigma \ar[d]^\wr_{\vartheta^{\Gamma_0^\sigma}_{\sigma(a'), \sigma(b')}} \\
\proj_{\cc_K}^1 \ar[r]^\sim_\tau &\proj_{\cc_K}^1 }
\end{equation}

\end{lemma}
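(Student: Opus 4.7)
The plan is to construct the maps one square at a time, starting with the top and building downward, using mostly formal properties of group actions and the fact that $\Aut(\proj_{\cc_K}^{1,\mathrm{an}}) = \PGL_2(\cc_K)$.

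First I would verify that $\sigma$ restricts to a bijection $\Omega_{\Gamma_0} \to \Omega_{\Gamma_0^\sigma}$. The key observation is the conjugation identity $\sigma \gamma = (\sigma \gamma \sigma^{-1}) \sigma$, which shows that $\sigma$ carries each $\Gamma_0$-orbit to a $\Gamma_0^\sigma$-orbit bijectively; since $\sigma$ is a homeomorphism of $\proj_{\cc_K}^1$, it sends accumulation points of $\Gamma_0$-orbits to accumulation points of $\Gamma_0^\sigma$-orbits, hence limit points to limit points and non-limit points to non-limit points. The same identity shows that $\sigma$ descends to a bijection $\bar{\sigma} : \Omega_{\Gamma_0}/\Gamma_0 \to \Omega_{\Gamma_0^\sigma}/\Gamma_0^\sigma$, which makes the top square of (\ref{eq gluing commutative diagram}) commute by construction. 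Since the quotient maps $\pi_{\Gamma_0}, \pi_{\Gamma_0^\sigma}$ are analytic (and even locally isomorphisms away from fixed points of elliptic elements) and $\sigma$ is an analytic automorphism, $\bar{\sigma}$ inherits the structure of an analytic isomorphism between the quotients.

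Next I would define $\tau$ simply as the unique map making the bottom square commute, namely
\begin{equation*}
\tau := \vartheta^{\Gamma_0^\sigma}_{\sigma(a'), \sigma(b')} \circ \bar{\sigma} \circ (\vartheta^{\Gamma_0}_{a, b})^{-1}.
\end{equation*}
By the results recalled in \S\ref{sec proof of main theta}, both $\vartheta^{\Gamma_0}_{a,b}$ and $\vartheta^{\Gamma_0^\sigma}_{\sigma(a'), \sigma(b')}$ are analytic isomorphisms $\Omega_{(-)} / \Gamma_{(-)} \xrightarrow{\sim} \proj_{\cc_K}^1$, so $\tau$ is an analytic automorphism of $\proj_{\cc_K}^1$. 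Since every analytic automorphism of the projective line is a fractional linear transformation, we obtain $\tau \in \PGL_2(\cc_K)$.

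Finally, commutativity of the outer (six-term) part of (\ref{eq gluing commutative diagram}) involving the theta functions $\tilde{\Theta}^{\Gamma_0}_{a,b}$ and $\tilde{\Theta}^{\Gamma_0^\sigma}_{\sigma(a'), \sigma(b')}$ is automatic once the top and bottom squares are shown to commute, because by definition the outer vertical compositions factor as $\tilde{\Theta}^{\Gamma_0}_{a,b} = \vartheta^{\Gamma_0}_{a,b} \circ \pi_{\Gamma_0}$ and $\tilde{\Theta}^{\Gamma_0^\sigma}_{\sigma(a'), \sigma(b')} = \vartheta^{\Gamma_0^\sigma}_{\sigma(a'), \sigma(b')} \circ \pi_{\Gamma_0^\sigma}$. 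The only mild obstacle in this plan is verifying that $\bar{\sigma}$ really is an analytic isomorphism rather than merely a set-theoretic bijection; this follows from the fact that the rigid analytic structure on $\Omega / \Gamma$ is defined via the analytic structure on $\Omega$ modulo the free and properly discontinuous action, which is preserved under conjugation by the analytic automorphism $\sigma$.
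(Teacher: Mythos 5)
Your proposal is correct and follows essentially the same route as the paper's proof: check that $\sigma(\Omega_{\Gamma_0}) = \Omega_{\Gamma_0^\sigma}$, note that $\sigma$ descends to an analytic isomorphism $\bar{\sigma}$ of the quotients, define $\tau := \vartheta^{\Gamma_0^\sigma}_{\sigma(a'), \sigma(b')} \circ \bar{\sigma} \circ (\vartheta^{\Gamma_0}_{a, b})^{-1}$, and conclude $\tau \in \PGL_2(\cc_K)$ because every analytic automorphism of $\proj_{\cc_K}^1$ is a fractional linear transformation. Your extra remarks (the conjugation identity for orbits and the factorization $\tilde{\Theta} = \vartheta \circ \pi$ giving outer commutativity) are just slightly more explicit versions of steps the paper treats as immediate.
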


\begin{proof}

It is an easy exercise to directly check that we have $\sigma(\Omega_{\Gamma_0}) = \Omega_{\Gamma_0^\sigma}$ (and that we have $\sigma(b') \notin \Gamma_0^\sigma(\sigma(a'))$); the automorphism $\sigma: \proj_{\cc_K}^1 \to \proj_{\cc_K}^1$ thus restricts to an analytic isomorphism $\sigma: \Omega_{\Gamma_0} \to \Omega_{\Gamma_0^\sigma}$.  This clearly induces a well-defined map $\bar{\sigma} : \Omega_{\Gamma_0} / \Gamma_0 \to \Omega_{\Gamma_0^\sigma} / \Gamma_0^\sigma$ which is also an analytic isomorphism.  Now the composition $\tau := \vartheta^{\Gamma_0^\sigma}_{\sigma(a'), \sigma(b')} \circ \bar{\sigma} \circ (\vartheta^{\Gamma_0}_{a, b})^{-1}: \proj_{\cc_K}^1 \to \proj_{\cc_K}^1$ of analytic isomorphisms is an analytic automorphism of $\proj_{\cc_K}^1$ and is therefore a fractional linear transformation.
\end{proof}


For the rest of this subsection, we call an ordered pair $(v, w)$ of distinguished vertices a \emph{neighboring pair} (of distinguished vertices of $\Sigma_{S, 0}$) if $v$ and $w$ do not lie in the same axis $\Lambda_{(i)}$ and if the path $[v, w] \subset \Sigma_S$ contains no other distinguished vertex.  Given any neighboring pair $(v, w)$ and any real number $\nu > 0$, define the subspace $B(\{w\}, \nu)^+ \subset B(\{w\}, \nu)$ (resp. $B(\{w\}, \nu)^- \subset B(\{w\}, \nu)$) to consist of the points whose closest point in $\Lambda_{(j)}$ lies in the half-axis $[w, \eta_{a_j}]$ (resp. $[w, \eta_{b_j}]$); note that we have $B(\{w\}, \nu)^+ \cup B(\{w\}, \nu)^- = B(\{w\}, \nu)$.  Now for any neighboring pair $(v, w)$, we make the definitions 
\begin{equation*}
U_{v, w}^+ = [\eta_{a_i}, w] \cup (B(\{w\}, \nu)^+ \cap \Sigma_S), \ \ \ U_{v, w}^- = [\eta_{b_i}, w] \cup (B(\{w\}, \nu)^- \cap \Sigma_S),
\end{equation*}
\begin{equation*}
U_{v, w} = U_{v, w}^+ \cup U_{v, w}^- = \Lambda_{(i)} \cup [v, w] \cup (B(\{w\}, \nu) \cap \Sigma_S).
\end{equation*}
(Although the sets $U_{v, w}^\pm, U_{v, w}$ described above depend on the choice of $\nu$, we suppress it from the notation.)  It is an easy observation to note that given any $\nu > 0$, the convex hull $\Sigma_S$ coincides with the union of the subspaces $U_{v, w}$ over all neighboring pairs $(v, w)$: indeed, it contains the union of the subspaces $[a_i, w] \cup [b_i, w] = \Lambda_{(i)} \cup [v, w] \subset U_{v, w}$; this includes all axes $\Lambda_{(i)}$ (as there is at least one distinguished vertex of $\Sigma_{S, 0}$ lying in each $\Lambda_{(i)}$) as well as all paths between any pair of distinguished vertices lying in distinct axes and thus paths between any pair of axes and between any pair of points corresponding to elements of $S$.  Our strategy for proving the theorem is, after choosing an appropriate $\nu > 0$, to use \Cref{cor one segment} to describe the behavior of $\Theta_*$ restricted to each $U_{v, w}^-$ and $U_{v, w}^+$ and from that, describe it restricted to each $U_{v, w}$, and then ``glue these restrictions together'' to get a description of $\Theta_*$ on all of $\Sigma_S$.

Choose a neighboring pair $(v, w)$ with corresponding indices $i \neq j$ as in the construction of $U_{v, w}^\pm$.  Letting $\sigma_{v, w}^- \in \PGL_2(K)$ be the (unique) automorphism mapping $b_i, a_j, b_j \in \Omega_{\Gamma_0}$ respectively to $1, 0, \infty \in \sigma(\Omega)$, \Cref{lemma gluing} (putting $a' = a_i$ and $b' = b_i$) tells us that there is a fractional linear transformation $\tau_{v, w}^- \in \PGL_2(K)$ making the diagram (\ref{eq gluing commutative diagram}) commute.  On identifying the respective elements $\sigma_{v, w}^-(a_l), \sigma_{v, w}^-(b_l) \in \sigma(S)$ with the elements named $a_l, b_l$ in the statement of \Cref{cor one segment} for all indices $l$, we see that the hypotheses (i)-(iii) of \Cref{thm one segment} (and thus the hypotheses of \Cref{cor one segment}) are satisfied as they describe $(\eta_{\sigma_{v, w}^-(a_i)} \vee \eta_1, \mathfrak{v})$ as a neighboring pair of vertices of $\Sigma_{\sigma_{v, w}^-(S)}$.  It is clear that for any $\nu > 0$, the image $\sigma(U_{v, w}^-)$ coincides with the subspace $U^-$ defined in the statement of \Cref{cor one segment}; in fact, we have $\sigma([\eta_{b_i}, v]) = [\eta_1, \eta_1 \vee \eta_{1+\lambda}]$ and $\sigma([v, w]) = [\eta_1 \vee \eta_{1 + \lambda}, \mathfrak{v}]$.  Now, using the fact that the action of fractional linear transformations (in particular, $\tau_{v, w}^-$) on $\Berk$ is a metric-preserving homeomorphism, we use \Cref{cor one segment}(b) to describe the behavior of the function $\tilde{\Theta}_*$ restricted to $[\eta_{b_i}, w]$ as follows: 
\begin{itemize}
\item it maps $[\eta_{b_i}, v] = (\sigma_{v, w}^-)^{-1}([\eta_1, \eta_1 \vee \eta_{1+\lambda}])$ to the path $[(\tau_{v, w}^-)^{-1}(\eta_\infty), (\tau_{v, w}^-)^{-1}(\mathfrak{v})]$ in a manner that scales the metric by $p$; and 
\item it maps $[v, w] = (\sigma_{v, w}^-)^{-1}([\eta_1 \vee \eta_{1+\lambda}, \mathfrak{v}])$ to the path $[(\tau_{v, w}^-)^{-1}(\mathfrak{v}), (\tau_{v, w}^-)^{-1}(\eta_c \vee \eta_1)]$ (where $c = \Theta^{\Gamma_0^\sigma}_{1, 1+\lambda}(0)$) in a manner that scales the metric by $p$ on the sub-segments $[v, v']$ and $[w', w]$ and which preserves the metric on the sub-segment $[\tilde{v}, \tilde{w}]$, where $\tilde{v}$ (resp. $\tilde{w}$) is the unique point in the path $[v, w]$ of distance $\frac{v(p)}{p - 1}$ from $v$ (resp. $w$).
\end{itemize}
We also use \Cref{cor one segment}(c) to conclude that, if $\nu > 0$ is chosen small enough, we have 
\begin{equation} \label{eq disjointness -}
\tilde{\Theta}_*([\eta_{b_i}, w]) \cap (\tilde{\Theta}_*(U_{v, w}^- \smallsetminus [\eta_{b_i}, w]) = \varnothing.
\end{equation}

Now letting $\sigma_{v, w}^+ \in \PGL_2(K)$ be the (unique) automorphism mapping $a_i, b_j, a_j \in \Omega_{\Gamma_0}$ respectively to $1, 0, \infty \in \sigma(\Omega)$, \Cref{lemma gluing} (again putting $a' = a_i$ and $b' = b_i$) tells us that there is a fractional linear transformation $\tau_{v, w}^+ \in \PGL_2(K)$ making the diagram (\ref{eq gluing commutative diagram}) commute.  We now use \Cref{cor one segment} by identifying the respective elements $\sigma_{v, w}^+(a_i), \sigma_{v, w}^+(b_i) \in \sigma(S)$ with the elements named $b_i, a_i$ in the statement of the statement of that corollary for all indices $i$ (as before, except that we have now switched $a_i$ with $b_i$!), and by employing a completely analogous argument to the one above, to describe the behavior of the function $\tilde{\Theta}_*$ restricted to $[\eta_{a_i}, w]$ with respect to the metric to get a similar description as the one obtained above for its behavior on $[\eta_{b_i}, w]$.  This matches with the function's previously known behavior on $[v, w] = [\eta_{a_i}, w] \cap [\eta_{b_i}, w]$.  It also gives us the new information that $(\tilde{\Theta}_{a_i, b_i})_*$ maps $[\eta_{a_i}, v] = (\sigma_{v, w}^+)^{-1}([\eta_1, \eta_1 \vee \eta_{1+\lambda'}])$ (where $\lambda' = \sigma_{v, w}^+(b_i) - 1$) to the path $[\iota \circ (\tau_{v, w}^+)^{-1}(\eta_\infty), \iota \circ (\tau_{v, w}^+)^{-1}(\mathfrak{v})]$ (where $\iota$ is the reciprocal map) in a manner that scales the metric by $p$.  Finally, it is clear that for any $\nu > 0$, the image $\sigma(U_{v, w}^+)$ again coincides with the subspace $U^-$ defined in the statement of \Cref{cor one segment}, which means that if $\nu > 0$ is chosen small enough, we have 
\begin{equation} \label{eq disjointness +}
\tilde{\Theta}_*([\eta_{a_i}, w]) \cap \tilde{\Theta}_*(U_{v, w}^+ \smallsetminus [\eta_{a_i}, w]) = \varnothing.
\end{equation}
In particular, since the images $\tilde{\Theta}_*([\eta_{b_i}, w])$ and $\tilde{\Theta}_*([\eta_{a_i}, w])$ are each non-backtracking paths whose endpoints of Type I must respectively equal $\eta_{\tilde{\Theta}_*(a_i)}, \eta_{\tilde{\Theta}_*(b_i)} \in \proj_{\cc_K}^1$, we get that $\Theta^{\Gamma_0}_{a_i, b_i}$ maps the axis $\Lambda_{(i)}$ homeomorphically onto the axis $\Lambda_{\tilde{\Theta}_*(a_i), \tilde{\Theta}_*(b_i)}$.

It now follows directly that the behavior of the function $\tilde{\Theta}_*$ restricted to $\Lambda_{(i)} \cup [v, w]$ is as claimed in \Cref{thm main berk} with respect to the metric as well as being a homeomorphism onto its image $\Lambda_{\tilde{\Theta}(a_i), \tilde{\Theta}(b_i)} \cup [\tilde{\Theta}_*(v), \tilde{\Theta}_*(w)]$.  Combining (\ref{eq disjointness -}) and (\ref{eq disjointness +}), we moreover conclude that, for sufficiently small $\nu > 0$, we have 
\begin{equation} \label{eq disjointness +/-}
\tilde{\Theta}_*([v, w]) \cap \tilde{\Theta}_*(B(\{w\}, \nu) \smallsetminus [v, w]) = \varnothing.
\end{equation}
As the choice of neighboring pair $(v, w)$ was arbitrary, we have demonstrated these properties for all subspaces $U_{v, w}$.

Now we will show that $\tilde{\Theta}_*$ is one-to-one on a neighborhood of each of its distinguished vertices; as it has already been been shown to be one-to-one on each axis $\Lambda_{(i)}$ as well as each path $[v, w]$ between neighboring pairs of distinguished vertices, it will then follow from a straightforward exercise in topology concerning continuous maps between real trees that the function $\tilde{\Theta}_*$ is one-to-one on all of $\Sigma_S$.  We choose a distinguished vertex $w$ lying in an axis $\Lambda_{(j)}$ and a real number $\nu > 0$ small enough that $w$ is the only distinguished vertex in $B(\{w\}, \nu)$; it is clear from \Cref{dfn convex hull} that any neighborhood of $w$ in $\Sigma_S$ contains a star shape centered at $w$ with $2$ edges coming out of $w$ being ends of the paths $[\eta_{a_j}, w]$ and $[\eta_{b_j}, w]$ and with each other edge coming out of $w$ being the end of the path $[v, w]$ where $(v, w)$ is a neighboring pair.  We know from (\ref{eq disjointness +/-}) that, after possibly shrinking $\nu$, for any distinguished vertex $v$ such that $(v, w)$ is a neighboring pair, the image of $B(\{w\}, \nu) \cap [v, w]$ under $(\tilde{\Theta}^{\Gamma_0}_{a_i, b_i})_*$ is disjoint from the image of its complement in $B(\{w\}, \nu)$.  We also know (by applying results obtained above to the neighboring pair $(w, v)$ rather than $(v, w)$) that $\tilde{\Theta}_*$ is one-to-one when restricted to the axis $\Lambda_{(j)}$ and when restricted to the each of the paths $[\eta_{a_i}, w], [\eta_{b_i}, w] \subset \Sigma_S$.  All of this implies that the images of each edge coming out of $w$ in the star shape under $\tilde{\Theta}_*$ intersect only at the point $\tilde{\Theta}_*(w)$.  It follows that the function $\tilde{\Theta}_*$ is one-to-one on $B(w, \nu)$, as desired.

We finally have to show that $\tilde{\Theta}_*$ maps $\Sigma_S$ homeomorphically onto the convex hull $\Sigma_{\mathcal{B}}$.  From the fact that $\tilde{\Theta}_*$ is a homoeomorphism onto its image when restricted to each $U_{v, w}$ and that it is one-to-one on $\Sigma_S$, we see that it is a homeomorphism when restricted to $\Sigma_S = \bigcup U_{v, w}$ as well.  We have already noted that $\tilde{\Theta}_*$ maps axes to axes, or more precisely, that we have $\tilde{\Theta}_*(\Lambda_{(i)}) = \Lambda_{\tilde{\Theta}(a_i), \tilde{\Theta}(b_i)}$ for $0 \leq i \leq g$.  Meanwhile, we have seen that for any neighboring pair $(v, w)$ of distinguished vertices of $\Sigma_S$ with $v \in \Lambda_{(i)}$ and $w \in \Lambda_{(j)}$, the image $\tilde{\Theta}_*([v, w])$ is a (non-backtracking) path.  Then clearly the endpoints of the image lie in $\Lambda_{\tilde{\Theta}(a_i), \tilde{\Theta}(b_i)}$ and $\Lambda_{\tilde{\Theta}(a_j), \tilde{\Theta}(b_j)}$ while its interior does not intersect any axis $\Lambda_{\tilde{\Theta}(a_l), \tilde{\Theta}(b_l)}$.  Such a path must be contained in the convex hull $\Sigma_{\mathcal{B}}$ (as the convex hull is path connected and must contain the shortest path between each pair of axes), so we have $\tilde{\Theta}_*(\Sigma_S) \subseteq \Sigma_{\mathcal{B}}$.  Meanwhile, since the convex hull $\Sigma_S$ is connected, the image $\tilde{\Theta}_*(\Sigma_S)$ is also connected and we get the reverse inclusion $\tilde{\Theta}_*(\Sigma_S) \supseteq \Sigma_{\mathcal{B}}$.  This completes the proof of \Cref{thm main berk}.

\section{Cluster data of branch points of split degenerate superelliptic curves} \label{sec branch points}

An almost immediate corollary of \Cref{thm main berk} gives us a result concerning the branch locus of a split degenerate $p$-cover of the projective line.

\begin{cor} \label{cor branch points}

Let $C / K$ be a $p$-cyclic cover of $\proj_K^1$ which has split degenerate reduction, and denote its set of branch points by $\mathcal{B} \subset \proj_{\cc_K}^1$.  The set $\mathcal{B}$ is clustered in $\frac{pv(p)}{p - 1}$-separated pairs.

\end{cor}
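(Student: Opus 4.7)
The plan is to leverage \Cref{thm main berk}, which describes precisely how distances in the convex hull $\Sigma_S$ are transformed into distances in $\Sigma_{\mathcal{B}}$ under the extended map $\pi_*$. I would first reduce to the optimal case: since $C / K$ has split degenerate reduction, it is uniformized as $\Omega / \Gamma$ for a Schottky group $\Gamma \lhd \Gamma_0$ arising from some $p$-superelliptic fixed-point set $S$, and by \cite[Algorithm 4.2]{yelton2024branch} a sequence of foldings transforms $S$ into an optimal set $\Smin$ without changing $\Gamma_0$ (and hence without changing either $C$ or its set $\mathcal{B}$ of branch points). So we may assume $S$ is optimal and \Cref{thm main berk} applies.

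By \Cref{prop clustered in pairs distinguished vertices} applied to $\mathcal{B}$, it suffices to show that any two distinguished vertices $w \neq w'$ of $\Sigma_{\mathcal{B}}$ not lying on the same axis $\Lambda_{\alpha_i, \beta_i}$ satisfy $\delta(w, w') > \frac{2pv(p)}{p-1}$. Because $\pi_*$ is a homeomorphism $\Sigma_S \to \Sigma_{\mathcal{B}}$ sending each axis $\Lambda_{(i)}$ to $\Lambda_{\alpha_i, \beta_i}$, it matches distinguished vertices with distinguished vertices (the defining local property in \Cref{dfn convex hull} is preserved under such a homeomorphism). Hence every such pair $(w, w')$ has the form $(\pi_*(v), \pi_*(v'))$ for a pair of distinguished vertices $v, v'$ of $\Sigma_S$ lying on distinct axes $\Lambda_{(i)}$ and $\Lambda_{(j)}$.

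Applying \Cref{prop clustered in pairs distinguished vertices} in the other direction to $S$ (which is clustered in $\frac{v(p)}{p-1}$-separated pairs by \Cref{prop clustered in pairs}) gives $\delta(v, v') > \frac{2v(p)}{p-1}$. It remains to lower-bound the quantity $\mu(v, v')$ appearing in formula (\ref{eq dilation}). Since $v \in \Lambda_{(i)}$ and $v' \in \Lambda_{(j)}$, the initial subsegment of $[v, v']$ of length $\frac{v(p)}{p-1}$ lies inside the tubular neighborhood $\hat{\Lambda}_{(i)}$, and the terminal subsegment of the same length lies inside $\hat{\Lambda}_{(j)}$; the disjointness $\hat{\Lambda}_{(i)} \cap \hat{\Lambda}_{(j)} = \varnothing$ forces these two subsegments to be disjoint sub-paths of $\llbracket v, v' \rrbracket$, so $\mu(v, v') \geq \frac{2v(p)}{p-1}$. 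Plugging into (\ref{eq dilation}) then yields
\[
\delta(\pi_*(v), \pi_*(v')) \geq \delta(v, v') + (p-1) \cdot \tfrac{2v(p)}{p-1} > \tfrac{2v(p)}{p-1} + 2v(p) = \tfrac{2pv(p)}{p-1},
\]
as required.

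The main obstacle here is really bookkeeping rather than a new estimate: one must verify carefully that $\pi_*$ sends distinguished vertices to distinguished vertices, that the reduction to the optimal case genuinely preserves the set of branch points (this is already built into the foldings of \cite{yelton2024branch}), and that both the initial and terminal tubular pieces lie inside $\llbracket v, v' \rrbracket$. All of the substantive non-archimedean content is already packaged into \Cref{thm main berk}.
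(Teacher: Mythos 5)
Your proposal is correct and follows essentially the same route as the paper's own proof: bound $\mu(v,v') \geq \frac{2v(p)}{p-1}$ using the two subsegments of length $\frac{v(p)}{p-1}$ at either end of $[v,v']$, feed this into the dilation formula (\ref{eq dilation}), and translate back and forth between separated pairs and distances of distinguished vertices via \Cref{prop clustered in pairs distinguished vertices}. The only difference is that you spell out the preliminary reduction to an optimal set via the foldings of \cite[Algorithm 4.2]{yelton2024branch}, which the paper leaves implicit; this is a worthwhile clarification but not a different argument.
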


\begin{proof}

Let $v, w$ be distinct distinguished vertices of $\Sigma_S$ which do not lie in the same axis $\Lambda_{(l)}$ for any index $l$; \Cref{prop clustered in pairs distinguished vertices} says that we have $\delta(v, w) > \frac{2v(p)}{p - 1}$.  Let $i \neq j$ be the indices such that $v \in \Lambda_{(i)}$ and $w \in \Lambda_{(j)}$, and let $\tilde{v}$ (resp. $\tilde{w}$) be the (unique) point in the path $[v, w]$ whose distance from $v$ (resp. $w$) is equal to $\frac{v(p)}{p - 1}$.  Then, using the terminology of \Cref{thm main berk}, we have $[v, \tilde{v}] \cup [\tilde{w}, w] \subset \llbracket v, w \rrbracket$.  As each of the segments $[v, \tilde{v}]$ and $[\tilde{w}, w]$ has length $\frac{v(p)}{p - 1}$, we get $\mu(v, w) \geq \frac{2v(p)}{p - 1}$.  Now \Cref{thm main berk} says that the images $\pi_*(v), \pi_*(w)$ of the distinguished vertices $v, w$ are themselves distinguished vertices of $\Sigma_{\mathcal{B}}$ and that we have 
\begin{equation}
\delta(\pi_*(v), \pi_*(w)) = \delta(v, w) + (p - 1)\mu(v, w) \geq \delta(v, w) + 2v(p) > \frac{2v(p)}{p - 1} + 2v(p) = \frac{2pv(p)}{p - 1}.
\end{equation}
As each distinguished vertex of $\Sigma_{\mathcal{B}}$ is the image under $\pi_*$ of a distinguished vertex of $\Sigma_S$, we have shown that the distance between any pair of distinguished vertices of $\Sigma_{\mathcal{B}}$ is $> \frac{2v(p)}{p - 1}$.  \Cref{prop clustered in pairs distinguished vertices} now tells us that the set $\mathcal{B}$ is clustered in $\frac{pv(p)}{p - 1}$-separated pairs.
\end{proof}

\begin{rmk} \label{rmk semistable model}

It should be possible to provide a purely geometric proof of \Cref{cor branch points}, based on the fact that a curve with split degenerate reduction over $K$, by definition, has a model over the ring of integers $\mathcal{O}_K$ of $K$ the components of whose special fiber are each a copy of the projective line $\proj_k^1$ over the residue field.

Moreover, the converse of \Cref{cor branch points} -- that the branch points of a $p$-cyclic cover of the projective line being clustered in $\frac{pv(p)}{p - 1}$-separated pairs implies split degenerate reduction -- is true, at least in the tame case.  This again can in principle be demonstrated by purely geometric arguments, and the idea of the proof is as follows.  Assume for simplicity that we have $\infty \in \mathcal{B}$ and that the (maximal) cluster $\mathfrak{s}_0 := \mathcal{B} \smallsetminus \{\infty\}$ of $\mathcal{B}$ has depth $d(\mathfrak{s}_0) = 0$ (these conditions can always be imposed after applying a suitable fractional linear transformation to $x$ and possibly replacing $K$ by a degree-$p$ extension).  Given such a curve $C$, we want to construct a model $\mathcal{C}$ of $C$ over the ring of integers $\mathcal{O}_K$.  In order to do so, we will construct a model $\mathcal{X} / \mathcal{O}_K$ of $\proj_K^1$ and let $\mathcal{C}$ be the normalization of $\mathcal{X}$ in the function field $K(C)$; the $p$-cyclic covering map $C \to \proj_K^1$ extends to a $p$-cyclic covering map $\mathcal{C} \to \mathcal{X}$.  It is well known that any model of $\proj_K^1$ is defined by a set of equations $\{x = c_i x_i + \alpha_i\}_{0 \leq i \leq t}$ for some elements $\alpha_i \in K$ and $c_i \in K^\times$; each coordinate $x_i$ corresponds to a component $\bar{X}_i$ of the special fiber.

Suppose first that $p$ is not the residue characteristic of $K$.  Let $\mathcal{B} \smallsetminus \{\infty\} =: \mathfrak{s}_0, \mathfrak{s}_1, \dots, \mathfrak{s}_t$ be the non-singleton clusters of $\mathcal{B}$, and for $1 \leq i \leq t$, choose an element $\alpha_i \in \mathfrak{s}_i$ and a scalar $c_i \in K^\times$ satisfying $v(c_i) = d(\mathfrak{s}_i)$, setting $c_0 = 1$ so that $x_0$ is just the standard coordinate $x$.  Then the desired model $\mathcal{X} / \mathcal{O}_K$ is defined by the set of equations $\{x_0 = c_i x_i + \alpha_i\}_{1 \leq i \leq t}$: one can show that the normalization $\mathcal{C}$ of $\mathcal{X}$ in the function field $K(C)$ is semistable.  (In fact, with a little work, one can see that the special fiber of $\mathcal{X}$ is isomorphic over the residue field $k$ to the image of the function $R_S$ as defined in \S\ref{sec GvanderP position}.)  The ($K$-)points $\alpha \in \mathcal{B} \subset \proj_K^1$ extend to $\mathcal{O}_K$-points $\underline{\alpha}$ of $\mathcal{X}$ which, by slight abuse of terminology, we will also refer to as elements of $\mathcal{B}$.  For each index $i$, the points $\alpha \in \mathcal{B}$ intersecting the component $\bar{X}_i$ are exactly the elements of $\mathfrak{s}_i$ which do not lie in any proper non-singleton sub-cluster of $\mathfrak{s}_i$.  It then follows from the property of being clustered in pairs that each component $\bar{X}_i$ of the special fiber of $\mathcal{X}$ intersects with exactly $0$, $1$, or $2$ elements of $\mathcal{B}$; these cases happen respectively when $\mathfrak{s}_i$ is the union of $\geq 2$ even-cardinality clusters, when $\mathfrak{s}_i$ has odd cardinality, and when $\mathfrak{s}_i$ does not satisfy either of the above two properties.  In the case that a component $\bar{X}_i$ does not intersect with any branch points, the cluster $\mathfrak{s}_i$ is the union of $\geq 2$ even-cardinality clusters, and there are exactly $p$ components of the special fiber of $\mathcal{C}$ (each isomorphic to $\proj_k^1$) mapping to the component $\bar{X}_i$ (with no ramification) under the $p$-cyclic covering map $\mathcal{C} \to \mathcal{X}$.  In the other two cases, there is exactly $1$ component $C_i$ of the special fiber of $\mathcal{C}$ mapping to $\bar{X}_i$ ramified at $2$ points, which by Riemann-Hurwitz implies that $C_i$ is again isomorphic to $\proj_k^1$.  In this way we see that the components of the special fiber of $\mathcal{C}$ are each isomorphic to $\proj_k^1$ and so $C$ has split degenerate reduction over $K$.

It is expected that via a more complicated construction of a semistable model $\mathcal{C}$ of $C$ exhibiting split degenerate reduction, one can prove the converse also in the wild case.  For instance, it can be done for hyperelliptic curves (\textit{i.e.} when $p = 2$) using methods found in the author's preprint \cite{fiore2023clusters} (collaborated with Leonardo Fiore).  In fact, under the hypothesis that $\mathcal{B}$ is clustered in $2v(2)$-separated pairs and using methods in \cite[\S6]{fiore2023clusters} and in particular results from \S6.4 of that paper, one may compute that the \emph{valid discs} (see \cite[Definition 5.11]{fiore2023clusters}) are precisely the discs corresponding to the points $v$ of the convex hull $\Sigma_{\mathcal{B}} \subset \Berk$ which satisfy $\delta(w, \Lambda_{(i)}) = 2v(2)$ for some index $i$ as well as the discs corresponding to non-distinguished vertices not lying in the tubular neighborhood $B(\Lambda_{(i)}, 2v(2))$ for any index $i$.  (We suspect that when $p \geq 3$, an identical statement holds, with $2v(2)$ replaced by $\frac{pv(p)}{p - 1}$.)  In this context, a \emph{valid disc} is by definition given by $\{z \in \cc_K \ | \ v(z - \alpha) \geq v(c)\}$ for some $\alpha \in \cc_K$ and $c \in \cc_K^\times$ such that the corresponding coordinate $x'$ (with $x = cx + \alpha$) defines one of the components of the special fiber of the desired model $\mathcal{X}$ of $\proj_K^1$.  If $\bar{X}$ is the component of the special fiber of $\mathcal{X}$ corresponding as above to a (non-distinguished) vertex of $\Sigma_{\mathcal{B}}$ (in which case the cluster $\mathfrak{s}$ corresponding to it via \Cref{prop vertex cluster correspondence}(b) is the union of $\geq 2$ even-cardinality sub-clusters, or is \emph{\"{u}bereven} as in \cite[Definition 8.6]{fiore2023clusters}), then there are exactly $2$ components of the special fiber of $\mathcal{C}$ (each isomorphic to $\proj_k^1$) mapping to the component $\bar{X}$ (with no ramification) under the $2$-covering $\mathcal{C} \to \mathcal{X}$.  Meanwhile, for all other components of the special fiber of $\mathcal{X}$, there is exactly $1$ component of the special fiber of $\mathcal{C}$ mapping to it, which is ramified at $1$ point, and \cite[Proposition 4.28, Proposition 6.17(c)(d)]{fiore2023clusters} can be used to show that this component is also isomorphic to $\proj_k^1$.  It follows again that $C$ has split degenerate reduction over $K$.

\end{rmk}

\begin{rmk} \label{rmk elliptic}

In the case that $g = 1$, every subset $S \subset \proj_K^1$ which is clustered in pairs is not only $p$-superelliptic but optimal by \cite[Proposition 3.25]{yelton2024branch}, and one can easily and quickly use \Cref{thm main berk} to describe the cluster data of the set $\mathcal{B}$ of branch points of the resulting superelliptic curve: the convex hull $\Sigma_S$ contains the axes $\Lambda_{(0)}, \Lambda_{(1)}$ at some distance $\delta > \frac{2v(p)}{p - 1}$ apart (equivalently, there is a cluster of $S$ of cardinality $2$ and relative depth $\delta > \frac{2v(p)}{p - 1}$), while the convex hull $\Sigma_{\mathcal{B}}$ contains the images of these axes at a distance of $\delta + 2v(p) > \frac{2pv(p)}{p - 1}$ apart (equivalently, if $\infty \in S$, the image of the cluster of $S$ of cardinality $2$ is a cluster of $\mathcal{B}$ with relative depth $\delta + 2v(p) > \frac{2pv(p)}{p - 1}$).  \Cref{cor branch points} says that the set of branch points of any split degenerate cyclic $p$-cover of $\proj_K^1$ of genus $(p - 1)g$ satisfies this property.  This is already known in the case of an elliptic curve (\textit{i.e.} when $g = 1$ and $p = 2$), in which case the split degenerate reduction condition is known as split multiplicative reduction: see \cite[Remark 9.5(a)]{fiore2023clusters} or the results of \cite{yelton2021semistable}, for instance.

\end{rmk}






\bibliographystyle{plain}
\bibliography{bibfile}

\end{document}